\documentclass[11pt]{amsart}
\textwidth=5in
\textheight=7.5in
\usepackage{a4wide,amssymb,amsmath,amsthm}
\usepackage[ansinew]{inputenc}

\vspace{2pt}


\theoremstyle{plain}
  \newtheorem{Th}{Theorem}
  \newtheorem*{Th*}{Theorem}

  \newtheorem{Pro}{Proposition}
\theoremstyle{definition}
  
  \newtheorem{defi}{Definition}
\theoremstyle{remark}
  \newtheorem*{Rm}{Remark}
\theoremstyle{remark}
  
\newcommand{\R}{\mathbb{R}}

\newcommand{\M}{M^n}

\newcommand{\Mcc}{M^2(c_1)\times M^2(c_2)}

\begin{document}

\title{CONSTANT ANGLE SURFACES IN PRODUCT SPACES}

\author[F. Dillen]{Franki Dillen}
\email[F. Dillen]{franki.dillen@wis.kuleuven.be}

\author[D. Kowalczyk]{Daniel Kowalczyk}
\email[D. Kowalczyk]{daniel.kowalczyk@wis.kuleuven.be}

\address{Katholieke Universiteit Leuven\\ Departement
Wiskunde\\ Celestij\-nenlaan 200 B, Box 2400\\ B-3001 Leuven\\ Belgium}

\thanks{This research was supported by Research Grant G.0432.07 of the Research Foundation-Flanders (FWO)}

\subjclass[2000]{53B25}

\begin{abstract}
We classify all the surfaces in $M^2(c_1)\times M^2(c_2)$ for which the tangent space $T_pM^2$ makes constant angles with $T_p(M^2(c_1)\times \{p_2\})$ (or equivalently with $T_p(\{p_1\}\times M^2(c_2))$ for every point $p=(p_1,p_2)$ of $M^2$. Here $M^2(c_1)$ and $M^2(c_2)$ are $2$-dimensional space forms, not both flat. As a corollary we give a classification of all the totally geodesic surfaces in $M^2(c_1)\times M^2(c_2)$.
\end{abstract}

\maketitle

\section{Introduction}
In recent years a lot of people started the study of submanifolds in product spaces, in particular surfaces $M^2$ in $M^2(c)\times \mathbb{R}$, where $M^2(c)$ is a $2$-dimensional space form of curvature $c \neq 0$. This was initiated by the study of minimal surfaces in the product space $\mathbb{M}^2 \times \mathbb{R}$ by Meeks and Rosenberg in \cite{MR} and by Rosenberg in \cite{R}. In the papers \cite{DFVV} and \cite{DM} geometers began the study of constant angle surfaces in $M^2(c) \times \mathbb{R}$, i.e. surfaces for which the normal of the surface makes a constant angle with the vector field $\partial_t$ parallel to the second component of $M^2(c) \times \mathbb{R}$ and hence also with the first component $T_pM^2(c)$ of $T_p(M^2(c) \times \mathbb{R})$. They proved that they can construct all the constant angle surfaces in $M^2(c) \times \mathbb{R}$ starting from an arbitrary curve in $M^2(c)$ and that these surfaces have constant Gaussian curvature. Here we would like to define and classify constant angle surfaces in a  product space $\Mcc$ of two $2$-dimensional space forms, not both flat. We show that these constant angle surfaces have necessarily constant Gaussian curvature. In the classification theorem we show that some of the constant angle surfaces can be constructed from curves in $M^2(c_1)$ and $M^2(c_2)$. In other cases, the constant angle surfaces in $M^2(c_1)\times M^2(c_2)$ will be constructed from a solution of a Sine-(or Sinh-)Gordon equation and its B\"acklund transformation.

\section{Preliminaries}
\subsection{Surfaces in $\Mcc$.}
Let $M^2(c_1)\times M^2(c_2)$ be the product of two $2$-dimensional space forms of constant sectional curvature $c_1$ and $c_2$ with the standard product metric $\widetilde{g}$, with $c_1$ and $c_2$ not both $0$. Denote by $\widetilde{\nabla}$ the Levi-Civita connection of $(M^2(c_1)\times M^2(c_2),\widetilde{g})$ and by $F$ the product structure of $M^2(c_1)\times M^2(c_2)$, see \cite{YK}. This is the $(1,1)$-tensor of $M^2(c_1)\times M^2(c_2)$ defined by
\[
F(X_1 + X_2) = X_1 - X_2,\]
for any vector field $X = X_1 + X_2$, where $X_1$ and $X_2$ denote the parts of $X$ tangent to the first and second factors, respectively. By definition of the product structure $F$, we see that $\frac{I + F}{2}(X)$ is the projection of the vector field $X$ on the first component and that the $(1,1)$-tensor $\frac{I + F}{2}$ has rank $2$ everywhere. Analogously we have that $\frac{I - F}{2}(X)$ is the projection of the vector field $X$ on the second component and that the $(1,1)$-tensor $\frac{I - F}{2}$ has rank $2$ everywhere. We note that the product structure has the following properties:
\begin{gather}
F^2 = I\:(F \neq I)\label{F^2},\\
\widetilde{g}(FX,Y) = \widetilde{g}(X,FY)\label{Sym},
\end{gather}
and
\begin{equation*}
(\widetilde{\nabla}_XF)(Y)= 0,
\end{equation*}
for any vector field $X$ and $Y$ of $M^2(c_1)\times M^2(c_2)$. The Riemann-Christoffel curvature tensor $\widetilde{R}$ of $(M^2(c_1)\times M^2(c_2),\widetilde{g})$ is given by
\begin{equation*}
\widetilde{R}(X,Y)Z = c_1\left(\frac{I+F}{2}(X)\wedge \frac{I+F}{2}(Y)\right)Z + c_2\left(\frac{I-F}{2}(X)\wedge \frac{I-F}{2}(Y)\right)Z,
\end{equation*}
where $\wedge$ associates to two tangent vectors $v,w \in T_p(M^2(c_1)\times M^2(c_2))$ the endomorphism defined by
\[(v\wedge w)u = \widetilde{g}(w,u)v - \widetilde{g}(v,u)w,\]
for every $u \in T_p(M^2(c_1)\times M^2(c_2))$.

Let us now consider a surface $M^2$ immersed in $M^2(c_1)\times M^2(c_2)$. We will denote by $X,Y,Z,\dots$ tangent vector fields and by $\xi,\xi_1,\xi_2,\dots$ vector fields normal to $M^2$ in $M^2(c_1)\times M^2(c_2)$. We can now let the product structure $F$ of $M^2(c_1)\times M^2(c_2)$ act on a tangent vector field $X$ or on a normal vector field $\xi$. We can consider the decomposition of $FX$ and $F\xi$ into a tangent component and a normal component as
\begin{gather*}
FX = fX + hX,\\
F\xi = s\xi + t\xi,
\end{gather*}
where $f: TM^2 \rightarrow TM^2$, $h: TM^2 \rightarrow T^{\perp}M^2$, $s: T^{\perp}M^2 \rightarrow TM^2$ and $t: T^{\perp}M^2 \rightarrow T^{\perp}M^2$  are  $(1,1)$-tensors on $M^2$. It can be easily deduced from equations $(\ref{F^2})$ and $ (\ref{Sym})$ that
\begin{gather}
f \textrm{ is a symmetric } (1,1)\textrm{-tensor field on }M^2\textrm{ such that } f^2X = X - shX,\label{symf}\\
t \textrm{ is a symmetric } (1,1)\textrm{-tensor field on }M^2\textrm{ such that } t^2\xi = \xi - hs\xi,\label{symt}\\
\widetilde{g}(hX,\xi) = g(X,s\xi),\label{trans}\\
fs\xi + st\xi = 0 \textrm{ and } hfX + thX = 0\label{orth},
\end{gather}
for every $X \in TM^2$ and every $\xi \in T^{\perp}M^2$. If we denote by $R$ the Riemann-Christoffel curvature tensor of $M^2$, then with the previous notations we obtain that Gauss, Codazzi and Ricci equations are written as follows in terms of $f$ and $h$:
\begin{multline}\label{Gauss}
R(X,Y)Z = S_{\sigma(Y,Z)}X - S_{\sigma(X,Z)}Y + a((X \wedge Y)Z + \\ (fX \wedge fY)Z) + b(f(X \wedge Y)Z + (X \wedge Y)fZ),
\end{multline}
\begin{multline}\label{Codazzi}
(\nabla\sigma)(X,Y,Z) - (\nabla\sigma)(Y,X,Z) = a(g(fY,Z)hX - g(fX,Z)hY) +\\ b(g(Y,Z)hX - g(X,Z)hY),
\end{multline}
\begin{equation}\label{Ricci}
R^{\perp}(X,Y)\xi =a(\widetilde{g}(hY,\xi)hX - \widetilde{g}(hX,\xi)hY) - \sigma(S_{\xi}X,Y) + \sigma(S_{\xi}Y,X),
\end{equation}
where $a = \frac{c_1 + c_2}{4}$ and $b = \frac{c_1 - c_2}{4}$. Since $M^2$ is a surface immersed in $M^2(c_1)\times M^2(c_2)$, we have that equation ($\ref{Gauss}$) is equivalent to the fact that the Gaussian curvature $K$ is equal to
\begin{equation*}
\det(S_{\xi_1}) + \det(S_{\xi_2}) + c_1\det(\frac{I + f}{2}) + c_2\det(\frac{I - f}{2}),
\end{equation*}
where $\{\xi_1,\xi_2\}$ is an orthonormal basis of $T^{\perp}M^2$. Moreover we have the following proposition that we can prove using the formulas of Gauss and Weingarten and the fact that $\widetilde{\nabla}F = 0$.
\begin{Pro}
For every $X,Y \in TM^2$ and every $\xi \in T^{\perp}M^2$, we have that
\begin{gather}
(\nabla_X f)(Y) = S_{hY}X + s(\sigma(X,Y)),\label{parf}\\
\nabla^{\perp}_X hY - h(\nabla_X Y) = t(\sigma(X,Y)) - \sigma(X,fY),\label{parh}\\
\nabla^{\perp}_X t\xi - t(\nabla^{\perp}_X \xi) = -\sigma(s\xi,X) - h(S_{\xi}X),\label{part}\\
\nabla_X s\xi - s(\nabla^{\perp}_X\xi) = -fS_{\xi}X + S_{t\xi}X.\label{pars}
\end{gather}
\end{Pro}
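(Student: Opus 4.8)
The plan is to derive all four identities simultaneously from the single structural fact $\widetilde\nabla F = 0$, combined with the Gauss formula $\widetilde\nabla_XY = \nabla_XY + \sigma(X,Y)$ and the Weingarten formula $\widetilde\nabla_X\xi = -S_\xi X + \nabla^\perp_X\xi$, by separating tangential and normal components. Fix $X,Y\in TM^2$. Starting from $\widetilde\nabla_X(FY) = F(\widetilde\nabla_XY)$, I would expand the left-hand side using $FY = fY + hY$: since $fY$ is tangent to $M^2$ the Gauss formula gives $\widetilde\nabla_X(fY) = \nabla_X(fY) + \sigma(X,fY)$, and since $hY$ is normal the Weingarten formula gives $\widetilde\nabla_X(hY) = -S_{hY}X + \nabla^\perp_X(hY)$. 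For the right-hand side I would first write $\widetilde\nabla_XY = \nabla_XY + \sigma(X,Y)$ and then let $F$ act, using the decompositions $F(\nabla_XY) = f(\nabla_XY) + h(\nabla_XY)$ and $F(\sigma(X,Y)) = s(\sigma(X,Y)) + t(\sigma(X,Y))$.

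Comparing the tangential parts of the resulting equation then yields $\nabla_X(fY) - S_{hY}X = f(\nabla_XY) + s(\sigma(X,Y))$, which is exactly $(\nabla_Xf)(Y) = S_{hY}X + s(\sigma(X,Y))$ once one recalls $(\nabla_Xf)(Y) = \nabla_X(fY) - f(\nabla_XY)$; comparing the normal parts gives $\nabla^\perp_X(hY) - h(\nabla_XY) = t(\sigma(X,Y)) - \sigma(X,fY)$. This settles the first two formulas. For the last two I would repeat the argument verbatim starting from $\widetilde\nabla_X(F\xi) = F(\widetilde\nabla_X\xi)$ with $\xi\in T^\perp M^2$: write $F\xi = s\xi + t\xi$, apply the Gauss formula to the tangent field $s\xi$ and the Weingarten formula to the normal field $t\xi$ on the left, and expand $F(\widetilde\nabla_X\xi) = F(-S_\xi X + \nabla^\perp_X\xi)$ on the right as $-f(S_\xi X) - h(S_\xi X) + s(\nabla^\perp_X\xi) + t(\nabla^\perp_X\xi)$. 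The tangential comparison produces $\nabla_X(s\xi) - s(\nabla^\perp_X\xi) = -fS_\xi X + S_{t\xi}X$, and the normal comparison produces $\nabla^\perp_X(t\xi) - t(\nabla^\perp_X\xi) = -\sigma(s\xi,X) - h(S_\xi X)$, using the symmetry $\sigma(X,s\xi) = \sigma(s\xi,X)$.

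There is essentially no genuine obstacle: the whole argument is bookkeeping once $\widetilde\nabla F = 0$ is available. The only points demanding care are sign and convention consistency — in particular the Weingarten sign $\widetilde\nabla_X\xi = -S_\xi X + \nabla^\perp_X\xi$ and the compatibility $\widetilde g(\sigma(X,Y),\xi) = g(S_\xi X,Y)$ — and the observation that one is entitled to apply the Gauss and Weingarten formulas to $fY$, $hY$, $s\xi$, $t\xi$ precisely because the decompositions $F = f + h$ on $TM^2$ and $F = s + t$ on $T^\perp M^2$ were defined so that these pieces are, respectively, tangent and normal to $M^2$. Finally, tensoriality of all four identities in $X$ (and in $Y$, resp. $\xi$) follows automatically from the tensoriality of $f,h,s,t$ and of $\sigma$ and $S$, so it suffices to verify them on vector fields, which is exactly what the computation above does.
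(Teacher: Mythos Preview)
Your proof is correct and follows exactly the approach indicated in the paper, which states only that the proposition ``can be proved using the formulas of Gauss and Weingarten and the fact that $\widetilde{\nabla}F = 0$'' without spelling out the details. Your expansion of $\widetilde\nabla_X(FY)=F(\widetilde\nabla_XY)$ and $\widetilde\nabla_X(F\xi)=F(\widetilde\nabla_X\xi)$ followed by comparison of tangential and normal parts is precisely the intended computation, and the signs all check out.
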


We remark that the $(1,1)$-tensor $s$ is, in some sense, a kind of transpose of the $(1,1)$-tensor $h$, because of equation (\ref{trans}). We can easily see that equations (\ref{parh}) and (\ref{pars}) are equivalent because of equation (\ref{trans}). Analogously we can see that the two equations of (\ref{orth}) are equivalent.

The equations (\ref{Gauss}), (\ref{Codazzi}), (\ref{Ricci}), (\ref{parf}), (\ref{parh}) and (\ref{part}) are called the compatibility equations of surfaces in $M^2(c_1)\times M^2(c_2)$. The following theorems follow from more general results proven in \cite{K}.

\begin{Th}
Let $(M^2,g)$ be a simply connected Riemannian surface with Levi-Civita connection $\nabla$, $\nu$ a Riemannian vector bundle over $M^2$ of rank $2$ with metric $\widetilde{g}$, $\nabla^{\perp}$ a connection on $\nu$ compatible with the metric $\widetilde{g}$, $\sigma$ a symmetric $(1,2)$ tensor with values in $\nu$. Let $f:TM^2 \rightarrow TM^2$ , $t:\nu \rightarrow \nu$ and $h: TM^2 \rightarrow \nu$  be $(1,1)$-tensors over $M^2$ that satisfy equations (\ref{symf}), (\ref{symt}) and (\ref{orth}). Define $s:\nu \rightarrow TM^2$ by $g(s\xi,X) = \widetilde{g}(\xi,hX)$  for $X,Y \in TM^2$ , $\xi \in \nu$. Moreover $\frac{I + F}{2}$ and $\frac{I - F}{2}$ are bundle maps of rank $2$ defined such that $FX = fX + hX$ and $F\xi = s\xi + t\xi$. Assume that the compatibility equations for $M^2(c_1)\times M^2(c_2)$ are satisfied. Then there exists an isometric immersion $\psi:M^2 \rightarrow M^2(c_1)\times M^2(c_2)$ such that $\sigma$ is the second fundamental form, $\nu$ is isomorphic to the normal bundle of $\psi(M^2)$ in $M^2(c_1)\times M^2(c_2)$ by an isomorphism $\widetilde{\psi}: \nu \rightarrow T^{\perp}\psi(M^2)$ and such that
\begin{equation}\label{structure}
\widetilde{F}(\psi_{*}X) = \psi_{*}(fX) + \widetilde{\psi}(hX),
\end{equation}
and
\begin{equation}\label{structure2}
\widetilde{F}(\widetilde{\psi}\xi) = \psi_{*}(s\xi) + \widetilde{\psi}(t\xi),
\end{equation}
where $\widetilde{F}$ is the product structure of $M^2(c_1)\times M^2(c_2)$.
\end{Th}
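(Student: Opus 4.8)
The plan is to recognise this as a Bonnet-type (fundamental) theorem and to reduce it to the general existence result of \cite{K}; what follows is a sketch of the argument in the present situation. First I would assemble the ambient data into a single object: let $E = TM^2 \oplus \nu$, a vector bundle of rank $4$ over $M^2$ with fibre metric $\langle\,\cdot\,,\,\cdot\,\rangle = g \oplus \widetilde g$ and metric connection $D = \nabla \oplus \nabla^{\perp}$. Define the bundle endomorphism $\widetilde F\colon E \to E$ in block form by $\widetilde F(X + \xi) = (fX + s\xi) + (hX + t\xi)$ for $X \in TM^2$, $\xi \in \nu$. Using (\ref{symf}), (\ref{symt}), (\ref{orth}) and the definition $g(s\xi,X) = \widetilde g(\xi,hX)$, a short computation shows that $\widetilde F$ is $\langle\,\cdot\,,\,\cdot\,\rangle$-symmetric with $\widetilde F^2 = I$, while the rank-$2$ hypothesis on $\frac{I+F}{2}$ and $\frac{I-F}{2}$ gives $\widetilde F \neq I$; thus $\widetilde F$ is an abstract product structure on $E$. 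Define also $S_\xi$ by $g(S_\xi X, Y) = \widetilde g(\sigma(X,Y),\xi)$.

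Next I would check that the compatibility equations say precisely that this abstract data is realisable. On $E$ consider the connection $\widehat\nabla$ given for $Y \in TM^2$ and $V = X + \xi \in \Gamma(E)$ by $\widehat\nabla_Y V = (\nabla_Y X - S_\xi Y) + (\sigma(X,Y) + \nabla^{\perp}_Y \xi)$, the natural re-assembly of the ambient $\widetilde\nabla$ along the would-be immersion. A direct calculation shows that the curvature of $\widehat\nabla$ equals the algebraic curvature operator $(u,v)\mapsto c_1\bigl(\tfrac{I+\widetilde F}{2}u \wedge \tfrac{I+\widetilde F}{2}v\bigr) + c_2\bigl(\tfrac{I-\widetilde F}{2}u \wedge \tfrac{I-\widetilde F}{2}v\bigr)$ if and only if (\ref{Gauss}), (\ref{Codazzi}), (\ref{Ricci}) hold (these are, respectively, the $TM^2\otimes TM^2 \to TM^2$, $TM^2\otimes TM^2 \to \nu$ and $TM^2\otimes\nu\to\nu$ blocks, the remaining block being equivalent to Codazzi), and that $\widehat\nabla \widetilde F = 0$ if and only if (\ref{parf}), (\ref{parh}), (\ref{part}) hold (here (\ref{pars}) is automatic from (\ref{parh}) and (\ref{trans}), as noted in the paper). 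So the compatibility equations are exactly the curvature identity for $\widetilde R$ together with parallelism of $\widetilde F$.

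Finally, the integration step. Since $M^2$ is simply connected and $\Mcc$ is a symmetric space whose curvature tensor has the form $\widetilde R(u,v)w = c_1(\tfrac{I+\widetilde F}{2}u\wedge\tfrac{I+\widetilde F}{2}v)w + c_2(\tfrac{I-\widetilde F}{2}u\wedge\tfrac{I-\widetilde F}{2}v)w$ and whose extra structure is carried by the parallel product structure $\widetilde F$, the general Cartan-type theorem of \cite{K} applies with target $N = \Mcc$ and produces $\psi$, the bundle isomorphism $\widetilde\psi\colon \nu \to T^{\perp}\psi(M^2)$, and (\ref{structure}), (\ref{structure2}); unwinding the construction, $\sigma$ is the second fundamental form of $\psi$. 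For a self-contained integration one realises each factor $M^2(c_i)$ as a quadric in a $3$-dimensional (pseudo-)Euclidean space, hence $\Mcc$ as a submanifold of some $\R^{p,q}$ with $p+q=6$ (with the obvious modification if one $c_i=0$), builds out of $D$, $\sigma$, $S$, $\widetilde F$ and the two position vector fields a connection form with values in the Lie algebra of the isometry group of $\Mcc$, observes that its Maurer--Cartan / Gauss--Weingarten integrability conditions coincide with the equations just verified, and integrates by Frobenius using simple-connectedness.

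The main obstacle is precisely this last integration: producing the genuine map $\psi$ from purely abstract bundle data, checking that its image lies in $\Mcc$ (not merely in the ambient $\R^{p,q}$), and checking that $\widetilde\psi$ intertwines $\widetilde F$ with the ambient product structure as in (\ref{structure2}). The accompanying bookkeeping point — that translating $\widetilde\nabla F = 0$ and $F^2 = I$ into the block relations (\ref{symf})--(\ref{orth}) and (\ref{parf})--(\ref{part}) loses no information, so that these relations really are sufficient — is what makes the reduction to \cite{K} legitimate.
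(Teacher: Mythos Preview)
Your proposal is correct and matches the paper's approach: the paper does not prove this theorem but simply states that it ``follows from more general results proven in \cite{K}.'' Your sketch is in fact more detailed than anything the paper provides, since you spell out how the abstract bundle data $(E,\widehat\nabla,\widetilde F)$ encodes the compatibility equations and thereby makes the invocation of \cite{K} legitimate.
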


\begin{Th}
Let $\psi: M^2 \rightarrow M^2(c_1)\times M^2(c_2)$, resp. $\psi':M^2\rightarrow M^2(c_1)\times M^2(c_2)$, be isometric immersions, with corresponding second fundamental form $\sigma$, resp. $\sigma'$, shape operator $S$, resp. $S'$, normal space $T^{\perp}M^2$, resp. $T^{\perp'}M^2$. Let $f$ and $h$ be $(1,1)$-tensors on $M^2$ defined by (\ref{structure}) and $f'$ and $h'$ similarly for $\psi'$. Suppose that the following conditions hold:
\begin{enumerate}
  \item $fX = f'X$ for every $X \in T_pM^2$ and $p \in M^2$.
  \item There exists an isometric bundle map $\widetilde{\phi}:T^{\perp}M^2 \rightarrow T^{\perp'}M^2$ such that
  \begin{gather*}
  \widetilde{\phi}(\sigma(X,Y)) = \sigma'(X,Y),\\
  \widetilde{\phi}(\nabla^{\perp}_X\xi) = \nabla^{\perp'}_{X}\widetilde{\phi}(\xi)
  \end{gather*}
  and
  \begin{equation*}
  \widetilde{\phi}(hX) = h'X
  \end{equation*}
  for every $X \in T_pM^2$, $\xi \in T^{\perp}_pM^2$ and $p \in M^2$.
\end{enumerate}
Then there exists an isometry $\tau$ of $M^2(c_1)\times M^2(c_2)$ such that $\tau \circ \psi = \psi'$ and $\tau_{*|T^{\perp}\M} = \widetilde{\phi}$.
\end{Th}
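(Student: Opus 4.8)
The plan is to run the classical Bonnet-type rigidity argument, adapted to the ambient product structure $\widetilde{F}$. Pull back the ambient tangent bundle along each immersion, identifying $\psi^{*}T\Mcc\cong\psi_{*}TM^{2}\oplus T^{\perp}M^{2}$ and $(\psi')^{*}T\Mcc\cong\psi'_{*}TM^{2}\oplus T^{\perp'}M^{2}$; each carries the metric $\widetilde{g}$, the connection induced by $\widetilde{\nabla}$, and the parallel tensor $\widetilde{F}$. Define a bundle map $A$ over $\mathrm{id}_{M^{2}}$ from the first bundle to the second by $A(\psi_{*}X)=\psi'_{*}X$ and $A(\xi)=\widetilde{\phi}(\xi)$ for $X\in TM^{2}$, $\xi\in T^{\perp}M^{2}$. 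Since $\psi,\psi'$ are isometric immersions and $\widetilde{\phi}$ is an isometric bundle map, $A$ is a bundle isometry, and $A\circ d\psi=d\psi'$ by construction; the whole proof then reduces to showing that $A$ is parallel and commutes with $\widetilde{F}$, and to integrating it.

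For parallelism I would push the Gauss and Weingarten formulas of $\psi$ through $A$: from $\widetilde{\nabla}_{X}(\psi_{*}Y)=\psi_{*}(\nabla_{X}Y)+\sigma(X,Y)$ and $\widetilde{\phi}\circ\sigma=\sigma'$ one recovers the Gauss formula of $\psi'$, while from $\widetilde{\nabla}_{X}\xi=-\psi_{*}(S_{\xi}X)+\nabla^{\perp}_{X}\xi$, the identity $\widetilde{\phi}\circ\nabla^{\perp}=\nabla^{\perp'}\circ\widetilde{\phi}$, and the fact that $S_{\xi}=S'_{\widetilde{\phi}\xi}$ (which holds because $\widetilde{\phi}$ is an isometry with $\widetilde{\phi}\sigma=\sigma'$, so that $g(S'_{\widetilde{\phi}\xi}X,Y)=\widetilde{g}(\widetilde{\phi}\sigma(X,Y),\widetilde{\phi}\xi)=\widetilde{g}(\sigma(X,Y),\xi)=g(S_{\xi}X,Y)$) one recovers the Weingarten formula of $\psi'$; hence $A$ is parallel. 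For $A\widetilde{F}=\widetilde{F}'A$: on tangent vectors this is immediate from $\widetilde{F}(\psi_{*}X)=\psi_{*}(fX)+hX$ together with $f=f'$ and $\widetilde{\phi}h=h'$; on normal vectors, writing $\widetilde{F}\xi=s\xi+t\xi$, the relation $g(s\xi,X)=\widetilde{g}(hX,\xi)$ with $\widetilde{\phi}h=h'$ gives $s'\circ\widetilde{\phi}=s$ at once, and for $t$ one studies the defect $B=\widetilde{\phi}\circ t-t'\circ\widetilde{\phi}$. Feeding (\ref{part}) for $\psi$ and for $\psi'$ through the identities just obtained shows that $B$ is parallel, i.e. $\nabla^{\perp'}_{X}(B\xi)=B(\nabla^{\perp}_{X}\xi)$; moreover (\ref{parh}) together with $\widetilde{\phi}\sigma=\sigma'$ and $f=f'$ shows $B$ annihilates every $\sigma(X,Y)$, while (\ref{orth}) shows $B$ annihilates $\mathrm{Im}\,h$, so at a point where $\mathrm{Im}\,h+\mathrm{Im}\,\sigma$ fills the normal plane we get $B=0$, hence $B\equiv0$ on the connected surface $M^{2}$; the degenerate case in which $\mathrm{Im}\,h+\mathrm{Im}\,\sigma$ is a proper subbundle everywhere (which forces the surface into the totally geodesic list) has to be handled by hand.

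To integrate, fix $p_{0}\in M^{2}$: then $A_{p_{0}}$ is a linear isometry $T_{\psi(p_{0})}\Mcc\to T_{\psi'(p_{0})}\Mcc$ intertwining $\widetilde{F}$, so it carries the two factor-tangent-spaces at $\psi(p_{0})$ onto those at $\psi'(p_{0})$; since $\mathrm{Isom}(\MCc)\times\mathrm{Isom}(\MC)$ acts transitively on $\widetilde{F}$-adapted orthonormal frames of $\Mcc$, there is an isometry $\tau$ of $\Mcc$ with $\tau(\psi(p_{0}))=\psi'(p_{0})$ and $\tau_{*,\psi(p_{0})}=A_{p_{0}}$. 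Replacing $\psi$ by $\tau\circ\psi$ (which has the same $f,h,\sigma,\nabla^{\perp}$, as $\tau$ may be taken of the form $\tau_{1}\times\tau_{2}$ and hence preserves $\widetilde{F}$) we may assume $A_{p_{0}}=\mathrm{id}$ and $\psi(p_{0})=\psi'(p_{0})$; then $A$ parallel with $A_{p_{0}}=\mathrm{id}$ and $A\circ d\psi=d\psi'$ forces $\psi\circ\gamma$ and $\psi'\circ\gamma$ to be the development of one and the same curve in $T_{\psi(p_{0})}\Mcc$ issuing from $\psi(p_{0})$, for every curve $\gamma$ in $M^{2}$ starting at $p_{0}$; hence $\psi=\psi'$, and the theorem follows with $\tau_{*|T^{\perp}M^{2}}=\widetilde{\phi}$ (this last because $\tau_{*|T^{\perp}M^{2}}$ and $\widetilde{\phi}$ are both parallel bundle isometries that agree at $p_{0}$). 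Equivalently, once $A$ with its two properties is available one may invoke the general fundamental theorem of submanifolds of a product of space forms of \cite{K}, of which this is a special case, or pass to an isometric embedding of $\Mcc$ in a flat pseudo-Euclidean space, apply the classical Bonnet theorem there, and check that the resulting rigid motion preserves the product decomposition. I expect the main obstacle to be exactly the compatibility with $\widetilde{F}$, that is, the step $B\equiv0$ (including its degenerate case) and the realizability of $A_{p_{0}}$ by an ambient isometry; everything else is the routine Bonnet machinery.
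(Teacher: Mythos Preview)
The paper gives no proof of its own; immediately before Theorems~1 and~2 it states that ``the following theorems follow from more general results proven in \cite{K},'' so the paper's approach is precisely the citation you yourself offer as an alternative at the end of your proposal.

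Your direct Bonnet-type argument is the natural route, and the steps ``$A$ is parallel'' (via Gauss--Weingarten and the hypotheses on $\sigma$, $\nabla^{\perp}$) and ``integrate $A$ to an ambient isometry'' are routine and correct. The delicate point you isolate --- showing that $A$ intertwines $\widetilde{F}$ on \emph{normal} vectors, i.e.\ $\widetilde{\phi}\circ t=t'\circ\widetilde{\phi}$ --- really is the crux, and your parallel-defect argument ($B$ parallel via (\ref{part}), $B$ annihilating $\mathrm{Im}\,h$ via (\ref{orth}) and $\mathrm{Im}\,\sigma$ via (\ref{parh})) is valid whenever $\mathrm{Im}\,h+\mathrm{Im}\,\sigma$ spans the normal plane somewhere. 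However, the degenerate case you defer cannot simply be ``handled by hand'': take $\psi=\psi'$ to be a totally geodesic product of two geodesics ($\lambda_1=-1$, $\lambda_2=1$), so that $h\equiv 0$, $\sigma\equiv 0$, and the normal frame $\{\xi_1,\xi_2\}$ with $t\xi_1=\xi_1$, $t\xi_2=-\xi_2$ is parallel; then the swap $\widetilde{\phi}:\xi_1\leftrightarrow\xi_2$ satisfies every hypothesis of the theorem, yet when $c_1\neq c_2$ every isometry of $M^2(c_1)\times M^2(c_2)$ is a product isometry and hence cannot induce this $\widetilde{\phi}$. This strongly suggests that the general statement in \cite{K} carries the additional hypothesis $\widetilde{\phi}\circ t=t'\circ\widetilde{\phi}$ --- consistent with the fact that the existence Theorem~1 lists $t$ among the data --- and that the version printed here has silently omitted it. Your instinct that this step is ``the main obstacle'' is exactly right; the obstacle lies in the statement, not in your argument.
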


\subsection{Curves in $M^2(c)$.}

In this short subsection we will discuss curves in $2$-dimensional space forms $M^2(c)$ with $c \neq 0$. It is known that $M^2(c)$ is isometric to the $2$-dimensional sphere $\mathbb{S}^2(c)$ of radius $\frac{1}{\sqrt{c}}$ if $c > 0$, i.e.
\[
\mathbb{S}^2(c) = \{(p_1,p_2,p_3) \in \mathbb{E}^3\;|\; p_1^2 + p_2^2 + p_3^2 = \frac{1}{c}\},
\]
endowed with the induced metric of $\mathbb{E}^3$. The tangent space $T_p\mathbb{S}^2(c)$ in every point $p$ is given by
\[
\mathbb{S}^2(c) = \{v \in T_p\mathbb{E}^3\;|\; \langle p,v\rangle = 0\}.
\]
Using the cross-product $\times$ in $\mathbb{E}^3$, we define a complex structure $J$ on $T\mathbb{S}^2(c)$ by
\[J: T\mathbb{S}^2(c) \rightarrow T\mathbb{S}^2(c): v_p \mapsto \sqrt{c}(p\times v)_p.\]
It is easy to see that if $v \in T_p\mathbb{S}^2(c)$ and $\|v\|^2 = 1$, then $\{v,Jv\}$ is an orthonormal basis of $T_p\mathbb{S}^2(c)$.

We can define in a similar manner a complex structure when $c < 0$. It is known that $M^2(c)$ is isometric to the hyperbolic plane $\mathbb{H}^2(c)$ if $c < 0$. We use here the Minkowski or the hyperboloid model of the hyperbolic plane. Denote by $\mathbb{R}^3_1$ the Minkowski $3$-space with standard coordinates $p_1, p_2$ and $p_3$, endowed with the Lorentzian metric
\[
\langle .,.\rangle_1 = -dp_1^2 + dp_2^2 + dp_3^2.
\]
The hyperbolic plane $\mathbb{H}^2(c)$ can be constructed as the upper sheet ($p_1 > 0$) of the hyperboloid
\[
\{(p_1,p_2,p_3) \in \mathbb{R}^3_1\;|\; -p_1^2 + p_2^2 + p_3^2 = \frac{1}{c}\},
\]
endowed with the induced metric of $\mathbb{R}^3_1$. The tangent space $T_p\mathbb{H}^2(c)$ in every point $p$ is given by
\[
T_p\mathbb{H}^2(c) = \{v \in T_p\mathbb{R}^3_1\;|\; \langle p,v\rangle_1 = 0\}.
\]
Using the Lorentzian cross-product $\boxtimes$ in $\mathbb{R}^3_1$ (see for example \cite{DM}), we define a complex structure $J$ on $T\mathbb{H}^2(c)$ by
\[J: T\mathbb{H}^2(c) \rightarrow T\mathbb{H}^2(c): v_p \mapsto \sqrt{-c}(p\boxtimes v)_p.\]
It is easy to see that if $v \in T_p\mathbb{H}^2(c)$ and $\|v\|^2 = 1$, then $\{v,Jv\}$ is an orthonormal basis of $T_p\mathbb{H}^2(c)$. In the following we will denote $J$ as the complex structure of $M^2(c)$.

Let $\alpha:I \rightarrow M^2(c)$ be an arc-length  parameterized curve in $M^2(c)$. Denote by $T(s) \in T_{\alpha(s)}M^2(c) $ the tangent unit vector $\alpha'(s)$ and by $N(s) \in T_{\alpha(s)}M^2(c)$ the normal vector $JT(s)$. By direct calculations, one can show that
\begin{gather*}
T' = D_{T}T = \kappa N - c\alpha,
N' = D_{T}N = -\kappa T,
\end{gather*}
where $D$ is the Levi-Civita connection of $\mathbb{E}^3$ or of $\mathbb{R}^3_1$. We call $\kappa$ the geodesic curvature of $\alpha$ in $M^2(c)$. We will need the geodesic curvature of a curve in $M^2(c)$ in order to state our classification results of constant angle surfaces.

\section{Constant angle surfaces}

Since $f$ is a symmetric $(1,1)$-tensor on $M^2$, there exist continuous functions $\lambda_1 \leq \lambda_2$ on $M^2$ such that for every $p$ in $M^2$ $\lambda_1(p)$ and $\lambda_2(p)$ are eigenvalues of $f$ at $p$. Moreover $\lambda_1$ and $\lambda_2$ are differentiable functions in points where $\lambda_1$ and $\lambda_2$ are different. Assume that $\lambda_1 < \lambda_2$, then one can show that the distributions $T_{\lambda_1} =\{X \in TM^2 | fX = \lambda_1 X\}$ and $T_{\lambda_2} =\{X \in TM^2 | fX = \lambda_2 X\}$ are differentiable. From equations (\ref{symf}) and (\ref{trans}) it is easy to deduce that $\lambda_i^2 \leq 1$ for $i =1,2$. Hence we have that for every point $p$ there exists a unique $\theta_1(p)$ and  $\theta_2(p)$ in $[0,\frac{\pi}{2}]$ such that
\[\lambda_1(p) = \cos(2\theta_1(p))\qquad \textrm{and}\qquad \lambda_2(p) = \cos(2\theta_2(p)).\]
We call $\theta_1$ and $\theta_2$ the angle functions of $M^2$ in $M^2(c_1)\times M^2(c_2)$. This definition is inspired by the definition of angles between $2$-dimensional linear subspaces of the Euclidean space $\mathbb{E}^4$ given in \cite{MR0482513} or \cite{J}, where $\theta_1(p)$ and $\theta_2(p)$ are the angles between $T_pM^2$ and $T_p(M^2(c_1)\times \{p_2\})$, $p=(p_1,p_2)\in M$. Moreover this definition of angle for surfaces in $M^2(c_1)\times M^2(c_2)$ coincides with the definition of angle for surfaces in $M^2(c)\times \R$.

For Lagrangian surfaces in $S^2\times S^2$, a similar notion for angle was introduced in \cite{KS}; since for Lagrangian surfaces $\lambda_1+\lambda_2=0$, see below, there is only one angle function. Lagrangian surfaces in $S^2\times S^2$ are also studied in \cite{CU}.

\begin{defi}
A surface in $M^2(c_1)\times M^2(c_2)$ is a constant angle surface if $\theta_1$ and $\theta_2$ are constant.
\end{defi}

This definition also makes sense for $c_1=c_2=0$, but in this case it is better to call a surface in $\mathbb{E}^4$ a constant angle surface if there is a fixed plane in $\mathbb{E}^4$ such that $T_pM$ makes constant angles with this plane. This will be studied in a separate paper. Under additional conditions, a classification of those surfaces independently have been classified in \cite{BDOR}.

\subsection{Complex structures}

Let $\widetilde{J}$ and $\overline{J}$ be complex structures on $M^2(c_1)\times M^2(c_2)$ defined by
\[\widetilde{J}v = \widetilde{J}(v_1,v_2) = (J_1v_1,J_2v_2) =  (J_1\frac{I + F}{2}v,J_2\frac{I - F}{2}v)\]
and
\[\overline{J}v = \overline{J}(v_1,v_2) = (J_1v_1,-J_2v_2) =  (J_1\frac{I + F}{2}v,-J_2\frac{I - F}{2}v),\]
respectively, where $J_1$ and $J_2$ denote the standard complex structures on $M^2(c_1)$ and $M^2(c_2)$. We obtain the following connection between the angle functions and the complex structures $\widetilde{J}$ and $\overline{J}$.
\begin{Pro}
Consider a surface $M^2$ in $M^2(c_1)\times M^2(c_2)$ with angle functions $\theta_1$ and $\theta_2$, then
\begin{equation}
g(\widetilde{J}v,w) = \cos(\theta_1 - \theta_2)\omega_{M^2}(v,w) \ \  \mathrm{ or }\ \ \cos(\theta_1 + \theta_2)\omega_{M^2}(v,w)
\end{equation}
and
\begin{equation}
g(\overline{J}v,w) = \cos(\theta_1 + \theta_2)\omega_{M^2}(v,w)\ \ \mathrm{ or }\ \ \cos(\theta_1 - \theta_2)\omega_{M^2}(v,w),
\end{equation}
for all $v,w \in T_pM^2$ and $p\in M$ and a suitable choice of volume form $\omega_{M^2}$ of $M^2$.
\end{Pro}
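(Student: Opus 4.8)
The plan is to work at a point $p \in M^2$ and choose an orthonormal basis of $T_pM^2$ adapted to $f$, then compute $\widetilde J$ and $\overline J$ explicitly in terms of the angle functions. Since $f$ is symmetric with eigenvalues $\lambda_1 = \cos(2\theta_1)$ and $\lambda_2 = \cos(2\theta_2)$, pick unit eigenvectors $e_1 \in T_{\lambda_1}$, $e_2 \in T_{\lambda_2}$ (if $\lambda_1 = \lambda_2$, any orthonormal basis works). The identities $f^2 = I - sh$ and $\widetilde g(hX,\xi) = g(X,s\xi)$ from (\ref{symf}) and (\ref{trans}) give $\|he_i\|^2 = g(e_i, she_i) = 1 - \lambda_i^2 = \sin^2(2\theta_i)$; and from (\ref{orth}), $hfX + thX = 0$, so $he_1$ and $he_2$ lie in eigenspaces of $t$ with eigenvalues $-\lambda_1$, $-\lambda_2$ on the respective spans (using symmetry of $t$ from (\ref{symt})). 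One checks $\widetilde g(he_1, he_2) = g(e_1, s h e_2) = -\lambda_2 g(e_1, e_2) = 0$ by symmetry of $sh = I - f^2$, so after normalizing we get an orthonormal frame of the normal bundle (when both $\theta_i \neq 0$).

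Next I would write the projections onto the two factors: $\tfrac{I+F}{2}e_i = \tfrac{1+\lambda_i}{2}e_i + \tfrac12 he_i$ has squared norm $\cos^2\theta_i$, and $\tfrac{I-F}{2}e_i$ has squared norm $\sin^2\theta_i$. Now compute $g(\widetilde J e_1, e_2)$. By definition $\widetilde J v = (J_1 \tfrac{I+F}{2}v, J_2 \tfrac{I-F}{2}v)$, so
\[
g(\widetilde J e_1, e_2) = \widetilde g\bigl(J_1\tfrac{I+F}{2}e_1,\, \tfrac{I+F}{2}e_2\bigr) + \widetilde g\bigl(J_2\tfrac{I-F}{2}e_1,\, \tfrac{I-F}{2}e_2\bigr).
\]
Each term is a signed area in one factor: $J_1$ rotates the first-factor projection of $e_1$ by $\pi/2$ in $T_{p_1}M^2(c_1)$, and pairing with the first-factor projection of $e_2$ picks out the sine of the angle between those two projected vectors, scaled by $\cos\theta_1\cos\theta_2$. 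The key geometric point is that the angle between the first-factor projections of $e_1$ and $e_2$ and the angle between their second-factor projections are constrained: since $e_1 \perp e_2$ in $T_pM^2$ and the two projections of each $e_i$ are "orthogonal directions" in the $\mathbb{E}^4$-model sense, an elementary computation (the same one underlying the classical theory of angles between planes in $\mathbb{E}^4$, cf. \cite{MR0482513, J}) shows the first term equals $\pm\cos\theta_1\cos\theta_2$ and the second equals $\mp\sin\theta_1\sin\theta_2$, with correlated signs. Adding, $g(\widetilde J e_1, e_2) = \pm(\cos\theta_1\cos\theta_2 - \sin\theta_1\sin\theta_2) = \pm\cos(\theta_1+\theta_2)$, or, with the opposite orientation of one factor, $\pm\cos(\theta_1 - \theta_2)$. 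The same computation for $\overline J$ flips the sign of the second-factor term, yielding $\pm\cos(\theta_1-\theta_2)$ or $\pm\cos(\theta_1+\theta_2)$. Defining $\omega_{M^2}$ by $\omega_{M^2}(e_1, e_2) = 1$ (the choice of sign of this area form absorbing the overall $\pm$), and noting $g(\widetilde J e_i, e_i) = 0$ by skew-symmetry of $\widetilde J$, we get the stated formulas; bilinearity extends them to all $v, w$.

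The main obstacle is the sign bookkeeping: one must pin down that the two factor-contributions to $g(\widetilde Je_1,e_2)$ carry \emph{opposite} signs (so they combine into $\cos(\theta_1+\theta_2)$ rather than $\cos(\theta_1-\theta_2)$), and that switching between the $\widetilde J$ case and the $\overline J$ case — equivalently, reversing $J_2$ — toggles the sum $\theta_1+\theta_2$ with the difference $\theta_1-\theta_2$. This is exactly why the statement allows two possibilities: the sign of the first-factor rotation relative to the second is not canonically fixed, so without further normalization of orientations one genuinely gets $\cos(\theta_1\mp\theta_2)$ up to the ambiguity recorded in the proposition. I would handle the degenerate cases ($\theta_1 = 0$, so $\lambda_1 = 1$ and $he_1 = 0$, or $\theta_1 = \theta_2$) by a limiting/continuity argument, or by observing the formulas are polynomial identities in $\cos\theta_i, \sin\theta_i$ that hold on a dense set. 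The rest is the routine frame computation sketched above.
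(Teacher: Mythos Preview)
Your approach is essentially the paper's: diagonalize $f$ at $p$, compute the norms of the factor-projections $\|\tfrac{I\pm F}{2}e_i\|$, and evaluate $g(\widetilde J e_1,e_2)$ as a sum of two ``signed area'' terms, one per factor. The paper carries this out in the extrinsic model $\mathbb{S}^2(c_i)\subset\mathbb{E}^3$ (and analogously for $c_i<0$), writing $J_i$ via the cross product, so that the first-factor term is literally $-\sqrt{c_1}\,\bigl(\tfrac{I+F}{2}e_1\times\tfrac{I+F}{2}e_2\bigr)\cdot p_1$; since the cross product of tangent vectors to the sphere is normal, this equals $\epsilon_1\|\tfrac{I+F}{2}e_1\|\,\|\tfrac{I+F}{2}e_2\| = \epsilon_1\cos\theta_1\cos\theta_2$, and likewise the second term is $\epsilon_1\epsilon_2\sin\theta_1\sin\theta_2$ for an \emph{independent} sign $\epsilon_2$.

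One step you leave implicit is the orthogonality $\tfrac{I+F}{2}e_1\perp\tfrac{I+F}{2}e_2$ (and similarly with $I-F$); this is what forces $J_1\tfrac{I+F}{2}e_1$ to be parallel to $\tfrac{I+F}{2}e_2$ and makes the term equal to $\pm\cos\theta_1\cos\theta_2$ rather than merely bounded by it. It follows from $\widetilde g\bigl((I+F)e_1,(I+F)e_2\bigr)=2g(e_1,e_2)+2g(fe_1,e_2)=0$, using $F^2=I$, symmetry of $F$, and $fe_1=\lambda_1 e_1\perp e_2$. In the paper's cross-product formulation this orthogonality is hidden in the identity $|u\times v|=\|u\|\,\|v\|$ for orthogonal $u,v$.

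Your discussion of signs is where you drift from the paper. You assert that the two factor-contributions carry \emph{opposite} signs and then treat the alternative as arising from ``the opposite orientation of one factor''. In fact the two signs $\epsilon_1,\epsilon_2$ are independent from the outset; there is nothing to ``pin down'', and the proposition's ``or'' is precisely the statement that $\epsilon_2$ is not determined. The overall sign $\epsilon_1$ is absorbed into the choice of $\omega_{M^2}$. Passing from $\widetilde J$ to $\overline J$ flips the sign of the second-factor term, hence interchanges the two alternatives, which is the content of the second displayed equation. Once you drop the spurious claim of sign correlation, your argument and the paper's coincide.
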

\begin{proof}
We only prove this proposition in the case that $c_1,c_2 > 0$. The other cases can be proved analogously. Let us consider an orthonormal basis $\{e_1,e_2\}$ of $T_pM^2$ that diagonalizes $f$. Hence we have that $fe_i = \cos(2\theta_i)e_i$ for $i = 1,2$. Then
\[\begin{split}
g(\widetilde{J}e_1,e_2)& = -\sqrt{c_1}(\frac{I + F}{2}e_1\times \frac{I + F}{2}e_2)\cdot p_1 -\sqrt{c_2}(\frac{I - F}{2}e_1\times \frac{I - F}{2}e_2)\cdot p_2\\
& = \epsilon_1 (\|\frac{I + F}{2}e_1\|\|\frac{I + F}{2}e_2\| +\epsilon_2  \|\frac{I - F}{2}e_1\|\|\frac{I - F}{2}e_2\|)\\
& = \epsilon_1\cos(\theta_1 - \theta_2)\ \ \mathrm{ or }  \ \   \epsilon_1 \cos(\theta_1 + \theta_2),
\end{split}\]
where $\epsilon_1^2=\epsilon_2^2=1$. This proves the first equation in the proposition. The second equation can be proved similarly and the proof of the proposition is finished.
\end{proof}

By direct computations, one can now easily prove the following theorem.

\begin{Th}
A surface $M$ in $M^2(c_1)\times M^2(c_2)$ is a complex surface with respect to $\widetilde{J}$ or $\overline{J}$ if and only if $f$ is proportional to the identity. $M^2$ is Lagrangian with respect to $\widetilde{J}$ or $\overline{J}$ if and only if the trace of $f$ vanishes.
\end{Th}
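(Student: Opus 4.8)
The plan is to reduce everything to the pointwise relation $\widetilde F(\psi_*X)=\psi_*(fX)+\widetilde\psi(hX)$ together with the two complex structures $\widetilde J,\overline J$, so that being complex or Lagrangian for $\widetilde J$ or $\overline J$ becomes a purely algebraic condition on the eigenvalues of $f$. Recall from the preceding proposition that with respect to an orthonormal frame $\{e_1,e_2\}$ of $T_pM^2$ diagonalizing $f$, with $fe_i=\cos(2\theta_i)e_i$, one has $g(\widetilde J v,w)=\cos(\theta_1\mp\theta_2)\,\omega_{M^2}(v,w)$ and $g(\overline J v,w)=\cos(\theta_1\pm\theta_2)\,\omega_{M^2}(v,w)$. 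First I would observe that a surface $M^2$ is complex with respect to $\widetilde J$ (resp.\ $\overline J$) exactly when $\widetilde J$ (resp.\ $\overline J$) maps $T_pM^2$ into itself for all $p$; equivalently, since these are almost complex structures and $T_pM^2$ is $2$-dimensional, when the normal component of $\widetilde J e_1$ vanishes, i.e.\ when $g(\widetilde J e_1,e_2)$ has modulus $1$.

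From the proposition's formula this forces $\cos(\theta_1-\theta_2)=\pm1$ or $\cos(\theta_1+\theta_2)=\pm1$, which (using $\theta_1,\theta_2\in[0,\tfrac\pi2]$) is equivalent to $\theta_1=\theta_2$ or $\theta_1+\theta_2\in\{0,\pi\}$, hence in all cases to $\theta_1=\theta_2$ or $\{\theta_1,\theta_2\}=\{0,\tfrac\pi2\}$. Wait — the cleaner route is to argue directly on $f$: the condition ``$\widetilde J$ or $\overline J$ preserves $T_pM^2$'' is equivalent to ``the $2\times2$ Gram-type quantity $g(\widetilde J e_1,e_2)$ or $g(\overline J e_1,e_2)$ equals $\pm1$'', and combining both cases of the proposition this says $\lambda_1=\pm\lambda_2$ with the sign chosen so that the relevant cosine is $\pm1$; unwinding $\lambda_i=\cos(2\theta_i)$ one gets $\lambda_1=\lambda_2$ (the complex case) — that is, $f=\lambda\,\mathrm{Id}$. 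Conversely, if $f=\lambda\,\mathrm{Id}$ then $\theta_1=\theta_2$, so $g(\widetilde J v,w)=\pm\omega_{M^2}(v,w)$, which shows $\widetilde J$ restricts to $\pm$ the rotation by $\tfrac\pi2$ on $T_pM^2$, hence preserves $T_pM^2$; so $M^2$ is a complex surface for $\widetilde J$ (and the other sign gives $\overline J$). This establishes the first equivalence in both directions.

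For the Lagrangian statement, $M^2$ is Lagrangian with respect to $\widetilde J$ (resp.\ $\overline J$) iff $\widetilde\omega(v,w)=\widetilde g(\widetilde J v,w)=0$ (resp.\ with $\overline J$) for all $v,w\in T_pM^2$, i.e.\ iff $g(\widetilde J e_1,e_2)=0$ (resp.\ $g(\overline J e_1,e_2)=0$). By the proposition this is $\cos(\theta_1-\theta_2)=0$ or $\cos(\theta_1+\theta_2)=0$; in either case, using $\theta_1,\theta_2\in[0,\tfrac\pi2]$, this forces $\theta_1+\theta_2=\tfrac\pi2$, equivalently $\cos(2\theta_1)+\cos(2\theta_2)=\cos(2\theta_1)+\cos(\pi-2\theta_1)=0$, i.e.\ $\lambda_1+\lambda_2=0$, i.e.\ $\operatorname{trace} f=0$. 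Conversely $\operatorname{trace} f=0$ gives $\theta_1+\theta_2=\tfrac\pi2$, whence one of the two cosines vanishes and the corresponding $\widetilde\omega$ or $\overline\omega$ annihilates $T_pM^2\times T_pM^2$; since $\dim_{\mathbb R}M^2=\tfrac12\dim_{\mathbb R}(M^2(c_1)\times M^2(c_2))$, this is exactly the Lagrangian condition.

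The only real subtlety — and the step I would be most careful with — is the bookkeeping of the ``or'' in the proposition: which of the two alternatives $\cos(\theta_1-\theta_2)$ versus $\cos(\theta_1+\theta_2)$ occurs depends on orientation conventions (the signs $\epsilon_1,\epsilon_2$) and can in principle switch between $\widetilde J$ and $\overline J$ and from point to point, so one must phrase the argument so that it covers both alternatives uniformly; fortunately each alternative leads to the same algebraic conclusion on $\lambda_1,\lambda_2$ ($\lambda_1=\lambda_2$ for the complex case, $\lambda_1+\lambda_2=0$ for the Lagrangian case), so the ambiguity is harmless. With that observation in place the theorem follows by ``direct computation'' as claimed, and no deeper input than the displayed proposition and the definition of $\theta_1,\theta_2$ is needed.
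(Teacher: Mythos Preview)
Your approach is correct and is exactly what the paper has in mind: the paper offers no detailed proof beyond ``by direct computations, one can now easily prove the following theorem,'' and the intended computation is precisely the one you carry out, namely reading off from the preceding proposition that the complex condition amounts to $|\cos(\theta_1\pm\theta_2)|=1$ and the Lagrangian condition to $\cos(\theta_1\pm\theta_2)=0$, then translating these into $\lambda_1=\lambda_2$ and $\lambda_1+\lambda_2=0$ respectively.

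Two small slips to clean up: (i) in your first pass, $\theta_1+\theta_2\in\{0,\pi\}$ with $\theta_i\in[0,\tfrac\pi2]$ forces $\theta_1=\theta_2\in\{0,\tfrac\pi2\}$, not $\{\theta_1,\theta_2\}=\{0,\tfrac\pi2\}$ --- so the first route already gives $\theta_1=\theta_2$ directly, and the detour through ``$\lambda_1=\pm\lambda_2$'' in the second pass is unnecessary (and that intermediate claim is not quite what the cosine conditions say); (ii) your handling of the ``or'' ambiguity at the end is the right observation and deserves to be stated first rather than last, since it is what makes the two-case analysis uniform.
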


\subsection{Totally geodesic surfaces}

We show now that totally geodesic surfaces in $M^2(c_1)\times M^2(c_2)$ are constant angle surfaces in $M^2(c_1)\times M^2(c_2)$.
\begin{Pro}\label{geodconstant}
Suppose $M^2$ is a totally geodesic surface of $M^2(c_1)\times M^2(c_2)$, then $M^2$ is a constant angle surface in $M^2(c_1)\times M^2(c_2)$.
\end{Pro}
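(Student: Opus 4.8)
The plan is to show that all four angle-related tensors are parallel when $\sigma\equiv 0$, and then deduce that $f$ has constant eigenvalues. First I would specialize the structural equations of the Proposition to the totally geodesic case. Since $M^2$ is totally geodesic, $\sigma\equiv 0$, and hence every shape operator $S_\xi$ vanishes as well. Substituting this into equation (\ref{parf}) gives immediately
\[
(\nabla_X f)(Y) = S_{hY}X + s(\sigma(X,Y)) = 0,
\]
so $f$ is a parallel $(1,1)$-tensor on $M^2$. (As a byproduct, equations (\ref{parh}), (\ref{part}) and (\ref{pars}) also simplify, respectively, to $\nabla^\perp_X hY - h(\nabla_X Y) = -\sigma(X,fY)=0$, to $\nabla^\perp_X t\xi = t(\nabla^\perp_X\xi)$, and to $\nabla_X s\xi = s(\nabla^\perp_X\xi)$, but only the first identity, $\nabla f=0$, is needed.)

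The next step is standard linear algebra for parallel symmetric endomorphisms. Since $f$ is symmetric with respect to $g$ and parallel, its eigenvalues are constant along $M^2$: indeed, if $X$ is a local unit vector field with $fX = \lambda X$, then differentiating and using $(\nabla_Y f)(X)=0$ gives $f(\nabla_Y X) - (\nabla_Y\lambda) X - \lambda \nabla_Y X$ has the same projection structure, so taking the $g$-inner product with $X$ yields $Y(\lambda) = g((\nabla_Y f)(X),X) = 0$. More invariantly, the characteristic polynomial of $f$ has parallel — hence locally constant — coefficients, so on the (open, dense) set where $\lambda_1<\lambda_2$ both eigenvalue functions are locally constant, and by continuity $\lambda_1,\lambda_2$ are constant on all of $M^2$ (assuming $M^2$ connected; otherwise argue componentwise). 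Recalling from the discussion preceding the definition that $\lambda_i = \cos(2\theta_i)$ with $\theta_i\in[0,\tfrac\pi2]$, constancy of $\lambda_i$ is equivalent to constancy of $\theta_i$, which is precisely the definition of a constant angle surface.

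I expect the only mild subtlety to be the passage from \emph{locally constant eigenvalues} to \emph{globally constant angle functions}: one must be a little careful because $\lambda_1$ and $\lambda_2$ are only guaranteed differentiable where they are distinct, so the cleanest route is to work with the parallel (hence globally constant) symmetric functions $\operatorname{tr} f = \lambda_1+\lambda_2$ and $\det f = \lambda_1\lambda_2$, and then note that $\{\lambda_1,\lambda_2\}$ — as the unordered pair of roots of the fixed polynomial $X^2 - (\operatorname{tr} f)X + \det f$ — is the same at every point, whence the continuous ordered functions $\lambda_1\le\lambda_2$ are each constant. This avoids any case distinction about whether $f$ has a repeated eigenvalue. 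Everything else is a direct substitution into (\ref{parf}), so there is no real obstacle; the proof is genuinely short.

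\begin{proof}
Since $M^2$ is totally geodesic, its second fundamental form $\sigma$ vanishes identically, and consequently every shape operator $S_\xi$ is zero. Substituting $\sigma\equiv 0$ and $S\equiv 0$ into equation (\ref{parf}) yields
\[
(\nabla_X f)(Y) = S_{hY}X + s(\sigma(X,Y)) = 0
\]
for all $X,Y\in TM^2$; that is, $f$ is parallel. In particular the coefficients of the characteristic polynomial of $f$, namely $\operatorname{tr} f$ and $\det f$, are parallel functions on $M^2$ and hence constant. Therefore the unordered pair of eigenvalues $\{\lambda_1,\lambda_2\}$, being the set of roots of the fixed polynomial $X^2-(\operatorname{tr} f)X+\det f$, is the same at every point of $M^2$. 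Since $\lambda_1\le\lambda_2$ are continuous, each of them is constant on $M^2$. By the relations $\lambda_1=\cos(2\theta_1)$ and $\lambda_2=\cos(2\theta_2)$ with $\theta_1,\theta_2\in[0,\tfrac{\pi}{2}]$, the angle functions $\theta_1$ and $\theta_2$ are constant, so $M^2$ is a constant angle surface in $M^2(c_1)\times M^2(c_2)$.
\end{proof}
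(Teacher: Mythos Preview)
Your proof is correct and follows essentially the same approach as the paper: both obtain $\nabla f = 0$ directly from equation~(\ref{parf}) with $\sigma\equiv 0$ and $S\equiv 0$, and conclude that the eigenvalues of $f$---hence the angle functions---are constant. The paper's proof additionally uses the Codazzi equation to list the specific possible values of $(\lambda_1,\lambda_2)$, but it explicitly flags this as extra information needed later for the classification, not as part of the argument for the proposition itself.
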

\begin{proof}
As $M^2$ is a totally geodesic surface, we have that $(\nabla_X f) = 0$ for any $X \in TM^2$ and hence the eigenvalues of $f$ are constant. We give also the explicit values of $\lambda_1$ and $\lambda_2$, because we will need these values in the classification of totally geodesic surfaces. Let $p$ be an arbitrary point in $M^2(c_1)\times M^2(c_2)$ and $\{e_1,e_2\}$ an orthonormal basis in $T_pM^2$ such that $fe_1 = \lambda_1e_1$ and $fe_2 = \lambda_2e_2$. Using the equation of Codazzi, we obtain
\[
0 = (a\lambda_1 + b)(1 - \lambda^2_2) = (a\lambda_2 + b)(1 - \lambda^2_1),
\]
with $a = \frac{c_1 + c_2}{4}$ and $b = \frac{c_1 - c_2}{4}$. Hence we obtain that $\lambda_1 = \lambda_2 = -\frac{b}{a}$ with $c_1c_2 > 0$, $\lambda^2_1 = \lambda^2_2 = 1$, $\lambda^2_1 = 1$ and $\lambda_2 = -\frac{b}{a}$ with $c_1c_2 > 0$ or $\lambda_1^2 = 1$ and $\lambda_2 = \cos(2\theta)$ with $c_1c_2 = 0$ and $\theta \in [0,\frac{\pi}{2}]$. These conditions are equivalent to
\begin{enumerate}
               \item $\lambda_1 = \lambda_2 = -\frac{b}{a}$ with $c_1c_2 > 0 $,
               \item $\lambda_1= \pm 1$ and $\lambda_2 = \pm 1$,
               \item $\lambda_1 = \pm 1$ and $\lambda_2 = -\frac{b}{a}$ with $c_1c_2 > 0$, and
               \item $\lambda_1 = 1$ and $\lambda_2 \in [-1,1]$ with $c_1 = 0$  or $\lambda_1 = -1$ and $\lambda_2 \in [-1,1]$ with $c_2 = 0$.
\end{enumerate}
Since $\lambda_1$ and $\lambda_2$ are continuous, one of the above conditions must hold. Hence we obtain that totally geodesic surfaces of $M^2(c_1)\times M^2(c_2)$ are constant angle surfaces,  in which $\lambda_1$ and $\lambda_2$ have one of the above specific values.
\end{proof}
In this section we will give a local classification of totally geodesic surfaces for which $\lambda_1 = \lambda_2 = -\frac{b}{a}$ with $c_1c_2 > 0$. The other cases will be treated in the next sections and will appear as special cases of constant angle surfaces. We classify the totally geodesic surfaces of $M^2(c_1)\times M^2(c_2)$ in Theorem $\ref{theoremgeod}$. Suppose that $c_1,c_2 >0$, the other case can be treated analogously and the result of the second case is stated together with the first case in Proposition \ref{pro:geod}.

We can immerse $M^2(c_1)\times M^2(c_2)$ as a submanifold of codimension $2$ in the Euclidean space $\mathbb{E}^6$. We also remark that we obtain, by using the equation of Gauss, that the surface $M^2$ has constant Gaussian curvature $\frac{c_1c_2}{c_1 + c_2}$. So let $\psi:M^2 \rightarrow M^2(c_1)\times M^2(c_2)$ be a totally geodesic surface in $M^2(c_1)\times M^2(c_2)$ with $\lambda_1 = \lambda_2 = -\frac{b}{a}$. Let us fix a point $p$ in open set $U$ of $M^2$ and let $(u,v)$ be Fermi coordinates of $U$ in $M^2$, there always exist such coordinates on an open set of a surface (see for example \cite{Kuh}). The metric $g$ of $M$ then has the form
\[du^2 + G(u,v)dv^2\]
on the open set $U$ of $M$, with $G(0,v) = 1$ and $\frac{\partial}{\partial_u}G(0,v) = 0$ for every $v$, in terms of the Fermi coordinates $(u,v)$. Since $M^2$ has constant Gaussian curvature, we have that $G$ is uniquely determined by the partial differential equation
\[\frac{\partial^2}{\partial^2u}\sqrt{G} = -\frac{c_1c_2}{c_1 + c_2}\sqrt{G}.\]
So we find that $G$ is given by
\begin{equation}\label{metricG}
    \cos^2(\sqrt{\frac{c_1c_2}{c_1 + c_2}}u),
\end{equation}
because of the initial conditions $G(0,v) = 1$ and $\frac{\partial}{\partial_u}G(0,v) = 0$ for every $v$. Let us now consider $M^2$ as a surface of codimension $4$ immersed in $\mathbb{E}^6$. The formulas of Gauss are then given by
\begin{gather}
    D_{\partial_u}\partial_u = -\frac{c_1c_2}{c_1 + c_2}\overrightarrow{x},\label{formGaussgeod1}\\
    D_{\partial_u}\partial_v = D_{\partial_v}\partial_u = -\sqrt{\frac{c_1c_2}{c_1 + c_2}}\tan(\sqrt{\frac{c_1c_2}{c_1 + c_2}}u)\partial_v,\label{formGaussgeod2}\\
    D_{\partial_v}\partial_v = \sqrt{\frac{c_1c_2}{c_1 + c_2}}\cos(\sqrt{\frac{c_1c_2}{c_1 + c_2}}u)\sin(\sqrt{\frac{c_1c_2}{c_1 + c_2}}u)\partial_u - \frac{c_1c_2}{c_1 + c_2}\cos^2(\sqrt{\frac{c_1c_2}{c_1 + c_2}}u)\overrightarrow{x}\label{formGaussgeod3},
\end{gather}
where $\overrightarrow{x}$ is the position vector of $M^2$ in $\mathbb{E}^6$. Solving equations (\ref{formGaussgeod1}) and (\ref{formGaussgeod2}) we find that $\psi$ is locally given by
\begin{multline*}
 (\cos(\sqrt{\frac{c_1c_2}{c_1 + c_2}}u)\widetilde{f}_1(v) + \sin(\sqrt{\frac{c_1c_2}{c_1 + c_2}}u)\widetilde{g}_1,\dots,\cos(\sqrt{\frac{c_1c_2}{c_1 + c_2}}u)\widetilde{f}_3(v) + \sin(\sqrt{\frac{c_1c_2}{c_1 + c_2}}u)\widetilde{g}_3, \\ \cos(\sqrt{\frac{c_1c_2}{c_1 + c_2}}u)\overline{f}_1(v) + \sin(\sqrt{\frac{c_1c_2}{c_1 + c_2}}u)\overline{g}_1,\dots,\cos(\sqrt{\frac{c_1c_2}{c_1 + c_2}}u)\overline{f}_3(v) + \sin(\sqrt{\frac{c_1c_2}{c_1 + c_2}}u)\overline{g}_3),
\end{multline*}
where $\widetilde{g} = (\widetilde{g}_1,\widetilde{g}_2,\widetilde{g}_3)$ and $\overline{g} = (\overline{g}_1,\overline{g}_2,\overline{g}_3)$ are constant vectors in $\mathbb{R}^3$. Moreover we have the following conditions
\begin{gather*}
    g(\psi_u,\psi_u) = 1,\; g(\psi_u,\psi_v)=0,\;g(\psi_v,\psi_v) = \cos^2(\sqrt{\frac{c_1c_2}{c_1 + c_2}}u),\\
    g(\psi_u,h\partial_u)= 0,\;g(\psi_u,h\partial_v) = 0,\;g(h\partial_u,h\partial_u) = \frac{4c_1c_2}{(c_1 + c_2)^2},\;g(h\partial_u,h\partial_v) = 0,\\
     g(\psi_v,h\partial_v)= 0,\;g(\psi_v,h\partial_u) = 0,\;g(h\partial_v,h\partial_v) = \frac{4c_1c_2}{(c_1 + c_2)^2}\cos^2(\sqrt{\frac{c_1c_2}{c_1 + c_2}}u),\\
    g(\psi_u,\widetilde{\xi})=0,\;g(\psi_v,\widetilde{\xi})=0,\;g(\widetilde{\xi},\widetilde{\xi})=\frac{1}{c_1},\\
    g(\psi_u,\overline{\xi})=0,\;g(\psi_v,\overline{\xi})=0,\;g(\overline{\xi},\overline{\xi})=\frac{1}{c_2},\\
    g(h\partial_u,\widetilde{\xi}) = g(h\partial_v,\widetilde{\xi}) = g(h\partial_u,\overline{\xi}) = g(h\partial_v,\overline{\xi}) = 0,
\end{gather*}
in which $\widetilde{\xi} = (\psi_1,\psi_2,\psi_3,0,0,0)$ and $\overline{\xi} = (0,0,0,\psi_4,\psi_5,\psi_6)$. This conditions are equivalent to
\begin{gather*}
\sum_{i = 1}^3\widetilde{f}_i^2 = \sum_{i = 1}^3\widetilde{g}_i^2 = \frac{1}{c_1},\\
\sum_{j = 1}^3\overline{f}_j^2 = \sum_{j = 1}^3\overline{g}_j^2 = \frac{1}{c_2},\\
\sum_{i = 1}^3\widetilde{f}_i\widetilde{g}_i = \sum_{i = 1}^3\widetilde{f}'_i\widetilde{g}_i = 0,\\
\sum_{j = 1}^3\overline{f}_j\overline{g}_j =  \sum_{j = 1}^3\overline{f}'_j\overline{g}_j = 0,\\
\sum_{i = 1}^3(\widetilde{f}'_i)^2= \frac{c_2}{c_1 + c_2},\\
\sum_{j = 1}^3(\overline{f}'_i)^2= \frac{c_1}{c_1 + c_2}.
\end{gather*}
From the above equations we can conclude that $\widetilde{f} = (\widetilde{f}_1,\widetilde{f}_2,\widetilde{f}_3)$ and $\overline{f} = (\overline{f}_1,\overline{f}_2,\overline{f}_3)$ are curves in $M^2(c_1)$ and $M^2(c_2)$, respectively. Moreover we see that $\widetilde{f}$ and $\overline{f}$ are curves of speed
$\sqrt{\frac{c_2}{c_1 + c_2}}$ and $\sqrt{\frac{c_1}{c_1 + c_2}}$, respectively. The constant vector $\widetilde{g}$ is perpendicular to the vectors $\widetilde{f}$ and $\widetilde{f}'$ and the constant vector $\overline{g}$ is perpendicular to the vectors $\overline{f}$ and $\overline{f}'$. Hence we obtain that $\widetilde{g} = \pm\sqrt{\frac{c_1 + c_2}{c_2}}\widetilde{f}\times \widetilde{f}'$ and $\overline{g} = \pm\sqrt{\frac{c_1 + c_2}{c_1}}\overline{f}\times \overline{f}'$. Since $\widetilde{g}$ and $\overline{g}$ are constant vectors we obtain that the curves $\widetilde{f}$ and $\overline{f}$ are circles of radius $\frac{1}{\sqrt{c_1}}$ and $\frac{1}{\sqrt{c_2}}$, respectively. We obtain the following proposition.

\begin{Pro}\label{pro:geod}
Let $\psi:M^2\rightarrow M^2(c_1)\times M^2(c_2)$ be a totally geodesic surface with $\lambda_1 = \lambda_2 = \frac{c_2 - c_1}{c_1 + c_2}$, then $\psi$ is locally congruent to
\begin{multline}\label{geod1}
(\cos(\sqrt{\frac{c_1c_2}{c_1 + c_2}}u)\widetilde{f}(v) + \sin(\sqrt{\frac{c_1c_2}{c_1 + c_2}}u)\sqrt{\frac{c_1 + c_2}{c_2}}\widetilde{f}(v)\times\widetilde{f}'(v),\\
\cos(\sqrt{\frac{c_1c_2}{c_1 + c_2}}u)\overline{f}(v) + \sin(\sqrt{\frac{c_1c_2}{c_1 + c_2}}u)\sqrt{\frac{c_1 + c_2}{c_1}}\overline{f}(v)\times\overline{f}'(v)),
\end{multline}
where $\widetilde{f}$ and $\overline{f}$ are geodesic circles in $M^2(c_1)$ and $M^2(c_2)$, respectively, of constant speed $\sqrt{\frac{c_2}{c_1 + c_2}}$ and $\sqrt{\frac{c_1}{c_1 + c_2}}$ if $c_1,c_2 > 0$ or to
 \begin{multline}\label{geod2}
(\cosh(\sqrt{-\frac{c_1c_2}{c_1 + c_2}}u)\widetilde{f}(v) + \sinh(\sqrt{-\frac{c_1c_2}{c_1 + c_2}}u)\sqrt{\frac{c_1 + c_2}{c_2}}\widetilde{f}(v)\boxtimes\widetilde{f}'(v),\\
\cosh(\sqrt{-\frac{c_1c_2}{c_1 + c_2}}u)\overline{f}(v) + \sinh(\sqrt{-\frac{c_1c_2}{c_1 + c_2}}u)\sqrt{\frac{c_1 + c_2}{c_1}}\overline{f}(v)\boxtimes\overline{f}'(v)),
\end{multline}
where $\widetilde{f}$ and $\overline{f}$ are geodesic curves in $M^2(c_1)$ and $M^2(c_2)$, respectively, of constant speed $\sqrt{\frac{c_2}{c_1 + c_2}}$ and $\sqrt{\frac{c_1}{c_1 + c_2}}$ if $c_1,c_2 < 0$.
\end{Pro}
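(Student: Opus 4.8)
The plan is to reduce the classification to an ODE computation. Since $M^2$ is totally geodesic, Proposition \ref{geodconstant} already forces $\lambda_1=\lambda_2=-\tfrac ba=\tfrac{c_2-c_1}{c_1+c_2}$ with $c_1c_2>0$, so throughout one has $f=\lambda I$ with $\lambda=\tfrac{c_2-c_1}{c_1+c_2}$, which will be the key structural input. I treat the case $c_1,c_2>0$; the case $c_1,c_2<0$ runs word for word the same after replacing $\mathbb{S}^2(c_i)\subset\mathbb{E}^3$ by $\mathbb{H}^2(c_i)\subset\mathbb{R}^3_1$, the cross product by $\boxtimes$, observing that then $K:=\tfrac{c_1c_2}{c_1+c_2}<0$, and replacing $\cos,\sin$ by $\cosh,\sinh$; this produces (\ref{geod2}), with the great circles of $\mathbb{S}^2(c_i)$ replaced by geodesic lines of $\mathbb{H}^2(c_i)$.

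First I would note that $M^2$ has constant Gaussian curvature: inserting $\sigma=0$ and $f=\lambda I$ into the formula for $K$ from Section 2 gives $K=c_1\bigl(\tfrac{c_2}{c_1+c_2}\bigr)^2+c_2\bigl(\tfrac{c_1}{c_1+c_2}\bigr)^2=\tfrac{c_1c_2}{c_1+c_2}$. Then I fix a point, choose Fermi coordinates $(u,v)$ around it so that $g=du^2+G(u,v)\,dv^2$ with $G(0,v)=1$ and $G_u(0,v)=0$ and the $u$-curves geodesics, and use that constant curvature forces $(\sqrt G)_{uu}=-K\sqrt G$; together with the initial conditions this yields (\ref{metricG}).

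Next I would embed $\Mcc=\mathbb{S}^2(c_1)\times\mathbb{S}^2(c_2)$ isometrically in $\mathbb{E}^6=\mathbb{E}^3\times\mathbb{E}^3$ and compute the Gauss formulas of $\psi$ viewed as a surface of codimension four in $\mathbb{E}^6$. Because $M^2$ is totally geodesic in $\Mcc$, the only normal contribution to the ambient connection $D$ comes from the second fundamental form of $\Mcc$ in $\mathbb{E}^6$, which on the $i$-th factor is $-c_i\langle\cdot,\cdot\rangle\overrightarrow{x}_i$. Writing $X=X_1+X_2$ for the decomposition of a tangent vector along the two factors, the identity $\|X_1\|^2=\g(\tfrac{I+F}{2}X,X)=\tfrac12(\|X\|^2+g(fX,X))=\tfrac{1+\lambda}{2}\|X\|^2$ — valid precisely because $f=\lambda I$ — gives $\|(\partial_u)_1\|^2=\tfrac{c_2}{c_1+c_2}$, $\langle(\partial_u)_1,(\partial_v)_1\rangle=0$, and analogously on the second factor; combined with $\nabla_{\partial_u}\partial_u=0$ and $\Gamma^v_{uv}=\tfrac{G_u}{2G}=-\sqrt K\tan(\sqrt K u)$ this gives exactly (\ref{formGaussgeod1})--(\ref{formGaussgeod3}). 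The first two are linear ODEs in $u$: from $\psi_{uu}=-K\psi$ one gets $\psi(u,v)=\cos(\sqrt K u)A(v)+\sin(\sqrt K u)B(v)$, and substituting into $\psi_{uv}=-\sqrt K\tan(\sqrt K u)\psi_v$ forces $B'(v)=0$, so $B$ is a constant vector; writing $A(v)=(\widetilde f(v),\overline f(v))$ and $B=(\widetilde g,\overline g)$ (tildes for the first $\mathbb{E}^3$-factor, bars for the second) puts $\psi$ in the announced shape.

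Finally I would translate the requirements that $\psi$ maps into $\mathbb{S}^2(c_1)\times\mathbb{S}^2(c_2)$ with induced metric $du^2+G\,dv^2$ into conditions on $\widetilde f,\overline f,\widetilde g,\overline g$. Expanding $\|\psi_1\|^2=\tfrac1{c_1}$, $\|\psi_2\|^2=\tfrac1{c_2}$, $g(\psi_u,\psi_v)=0$, $g(\psi_v,\psi_v)=G$ in $\cos,\sin$ of $\sqrt K u$, together with the extra relation $\|(\psi_v)_1\|^2=\g(\tfrac{I+F}{2}\psi_v,\psi_v)=\tfrac{c_2}{c_1+c_2}G$ that again uses $f=\lambda I$, yields $|\widetilde f|^2=|\widetilde g|^2=\tfrac1{c_1}$, $|\overline f|^2=|\overline g|^2=\tfrac1{c_2}$, $\widetilde f\cdot\widetilde g=\overline f\cdot\overline g=0$, $|\widetilde f'|^2=\tfrac{c_2}{c_1+c_2}$, $|\overline f'|^2=\tfrac{c_1}{c_1+c_2}$, and — differentiating $\widetilde f\cdot\widetilde g=0$, $\overline f\cdot\overline g=0$ with $\widetilde g,\overline g$ constant — also $\widetilde f'\cdot\widetilde g=\overline f'\cdot\overline g=0$. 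Hence $\widetilde f$ is a constant-speed curve on $\mathbb{S}^2(c_1)$ and $\widetilde g$ is a fixed vector orthogonal to the linearly independent pair $\widetilde f,\widetilde f'$, so $\widetilde g=\pm\sqrt{\tfrac{c_1+c_2}{c_2}}\,\widetilde f\times\widetilde f'$ and then $\tfrac{d}{dv}(\widetilde f\times\widetilde f')=0$, i.e. $\widetilde f\times\widetilde f''=0$, i.e. $\widetilde f''=\mu\widetilde f$; differentiating $\widetilde f\cdot\widetilde f'=0$ and using $|\widetilde f'|^2=\tfrac{c_2}{c_1+c_2}$ pins $\mu=-K$, so $\widetilde f''=-K\widetilde f$ and $\widetilde f$ parametrizes a great (geodesic) circle of $\mathbb{S}^2(c_1)$; likewise $\overline f$ on $\mathbb{S}^2(c_2)$. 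Substituting the expressions for $\widetilde g,\overline g$ into $\psi$ gives (\ref{geod1}); conversely, a direct check (or the congruence theorem of Section 2) shows that every pair of such geodesic circles of the prescribed speeds produces a totally geodesic surface with $\lambda_1=\lambda_2=\tfrac{c_2-c_1}{c_1+c_2}$. The step I expect to require the most care is the derivation of (\ref{formGaussgeod1})--(\ref{formGaussgeod3}) in $\mathbb{E}^6$: one must correctly isolate the piece of the ambient second fundamental form that survives under total geodesy and use $f=\lambda I$ to evaluate the factor-components of $\partial_u$ and $\partial_v$; everything after that is ODE integration together with the elementary geometry of curves on a round sphere.
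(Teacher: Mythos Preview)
Your proposal is correct and follows essentially the same approach as the paper: Fermi coordinates, the constant-curvature ODE for $G$, the Gauss formulas (\ref{formGaussgeod1})--(\ref{formGaussgeod3}) in $\mathbb{E}^6$, integration to $\psi=\cos(\sqrt{K}u)A(v)+\sin(\sqrt{K}u)B$ with $B$ constant, and then the algebraic identification of $\widetilde f,\overline f,\widetilde g,\overline g$. The only notable difference is organizational: where the paper writes out a long list of orthogonality conditions involving $h\partial_u,h\partial_v,\widetilde\xi,\overline\xi$ to pin down the speeds and the relations $\widetilde f'\cdot\widetilde g=\overline f'\cdot\overline g=0$, you get the same information more directly by exploiting $f=\lambda I$ to read off $\|(\psi_v)_1\|^2=\tfrac{c_2}{c_1+c_2}G$ and by differentiating $\widetilde f\cdot\widetilde g=0$ with $\widetilde g$ constant; this is a mild streamlining, not a different argument.
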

\subsection{$f$ is proportional to the identity}
 Suppose now that $f = \lambda I$, and that $\lambda = \cos(2\theta)$ is a constant. Using equations (\ref{symf}) and (\ref{trans}) we see that $\widetilde{g}(hX,hY) = \sin^2(2\theta)g(X,Y)$  for every $X,Y \in TM^2$. Moreover from equation (\ref{parf}) we immediately deduce that $S_{hX}Y + s(\sigma(X,Y)) = 0$. Suppose first that $\sin{2\theta} = 0$ and hence we obtain that $\theta = 0$ or $\theta = \frac{\pi}{2}$. In the first case this means that the tangent vector fields along $M^2$ are eigenvectors of $F$ with eigenvalue $1$ and that the normal vector fields along $M^2$ are eigenvectors of $F$ with eigenvalue $-1$. It can be shown then that $M^2$ is an open part of $M^2(c_1) \times \{p_2\}$. Analogously we obtain that $M^2$ is an open part of $\{p_1\} \times M^2(c_2)$ if $\theta = \frac{\pi}{2}$.

Let $\theta$ be now a constant in $(0,\frac{\pi}{2})$. Using the fact that $S_{hX}Y + s(\sigma(X,Y)) = 0$ for every $X,Y \in TM^2$, we deduce that $v$ is an eigenvector of $S_{hv}$ with eigenvalue $0$ for every $v \in T_pM^2$, i.e. $S_{hv}v = 0$. Take now an arbitrary orthonormal basis $\{e_1,e_2\} \subset T_pM^2$. Consider the shape operators $S_{he_1}$ and $S_{he_2}$ associated to $he_1$ and $he_2$, respectively. We have then that $S_{he_1}e_2 = \mu_1e_2$ and $S_{he_2}e_1 = \mu_2e_1$. Moreover we have that $0 = S_{h(e_1 + e_2)}(e_1 + e_2) = \mu_1e_2 + \mu_2e_1$ and hence we have that $\mu_1 = \mu_2 = 0$. We conclude that $M^2$ is a totally geodesic surface in $M^2(c_1)\times M^2(c_2)$, because $\{he_1,he_2\}$ is an orthogonal basis of $T^{\perp}M^2$ and $S_{he_1} = S_{he_2} = 0$. Using the equation of Codazzi, we obtain that
\[(c_1\cos^2(\theta) - c_2\sin^2(\theta))\sin(\theta)\cos(\theta) = 0.\]
Since $\theta \in (0,\frac{\pi}{2})$ and $c_1$ and $c_2$ are not both $0$, we obtain that $c_1c_2 >0$ and $\tan^2(\theta) = \frac{c_1}{c_2}$. Hence we have that $\cos(2\theta) = \frac{c_2 - c_1}{c_1 + c_2}$. Moreover we have that the Gaussian curvature of the surface equals $\frac{c_1c_2}{c_1 + c_2}$. We summarize the previous in the following proposition.

\begin{Pro}
Let $M^2$ be a surface immersed in $M^2(c_1)\times M^2(c_2)$. Suppose $f = \lambda I$ with $\lambda \in [-1,1]$ and $\lambda$ is a constant. Then $M^2$ is a totally geodesic surface in $M^2(c_1)\times M^2(c_2)$. Moreover we have that the $\lambda$ equals $1,-1$ or $\frac{c_2  - c_1}{c_1 + c_2}$ with $c_1c_2 > 0$ and that the Gaussian curvature of $M^2$ equals $c_1,\; c_2$ and $\frac{c_1c_2}{c_1 + c_2}$. Hence the surface is an open part of $M^2(c_1)\times\{p_2\}$, $\{p_1\} \times M^2(c_2)$ or is locally given by  (\ref{geod1}) or (\ref{geod2}).
\end{Pro}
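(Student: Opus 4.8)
The plan is to collect the facts established in the discussion preceding the statement and then to split into the possible values of $\lambda$. Write $\lambda=\cos(2\theta)$ with $\theta\in[0,\tfrac{\pi}{2}]$. From (\ref{symf}) and (\ref{trans}) one gets $\widetilde{g}(hX,hY)=\sin^2(2\theta)\,g(X,Y)$, and from (\ref{parf}) with $f=\lambda I$ one gets $S_{hX}Y=-s(\sigma(X,Y))$ for all $X,Y$. I would first treat the degenerate case $\sin(2\theta)=0$, i.e.\ $\lambda=\pm 1$. Here $\widetilde{g}(hX,hX)=\sin^2(2\theta)\|X\|^2=0$ forces $h\equiv 0$, so (\ref{structure}) gives $\widetilde{F}(\psi_{*}X)=\psi_{*}(fX)=\pm\psi_{*}X$; thus $\psi_{*}(T_pM^2)$ lies pointwise in the $(\pm 1)$-eigenspace of $\widetilde{F}$, which is $T(M^2(c_1))$ when $\lambda=1$ and $T(M^2(c_2))$ when $\lambda=-1$. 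Consequently the differential of the projection of $\psi$ onto the other factor vanishes, that projection is locally constant, and $M^2$ is an open part of a slice $M^2(c_1)\times\{p_2\}$ (resp.\ $\{p_1\}\times M^2(c_2)$), which is totally geodesic, with Gaussian curvature $c_1$ (resp.\ $c_2$) by the Gauss equation.

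Next I would handle $\theta\in(0,\tfrac{\pi}{2})$. Now $h$ is injective, so for any orthonormal frame $\{e_1,e_2\}$ of $T_pM^2$ the pair $\{he_1,he_2\}$ is an orthogonal basis of $T^{\perp}M^2$. Since $S_{hX}Y=-s(\sigma(X,Y))$ is symmetric in $X$ and $Y$, evaluating at $X=Y=v$ yields $S_{hv}v=0$ for every $v$, and polarizing (take $v=e_1+e_2$) forces the remaining entries of $S_{he_1}$ and $S_{he_2}$ to vanish; hence $S_{he_1}=S_{he_2}=0$, so $\sigma\equiv 0$ and $M^2$ is totally geodesic. With $f=\lambda I$ and $\sigma=0$, Codazzi (\ref{Codazzi}) reduces to $(a\lambda+b)\bigl(g(Y,Z)hX-g(X,Z)hY\bigr)=0$, and injectivity of $h$ gives $a\lambda+b=0$, i.e.\ $(c_1\cos^2\theta-c_2\sin^2\theta)\sin\theta\cos\theta=0$. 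Since $\theta\in(0,\tfrac{\pi}{2})$ and $c_1,c_2$ are not both zero, this forces $c_1c_2>0$ and $\tan^2\theta=c_1/c_2$, hence $\lambda=\cos(2\theta)=\tfrac{c_2-c_1}{c_1+c_2}$; substituting into the Gauss equation $K=c_1\det\tfrac{I+f}{2}+c_2\det\tfrac{I-f}{2}$ (the shape-operator terms being gone) gives $K=\tfrac{c_1c_2}{c_1+c_2}$.

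Finally I would read off the geometric description. In all cases $M^2$ is totally geodesic and the common eigenvalue of $f$ is $1$, $-1$, or $\tfrac{c_2-c_1}{c_1+c_2}$ with $c_1c_2>0$; the first two were already identified as the slices $M^2(c_1)\times\{p_2\}$ and $\{p_1\}\times M^2(c_2)$, and the third is precisely the hypothesis of Proposition~\ref{pro:geod}, which supplies the local forms (\ref{geod1}) for $c_1,c_2>0$ and (\ref{geod2}) for $c_1,c_2<0$. I expect the only genuinely delicate point to be the slice argument in the degenerate case: one must verify that $h\equiv 0$ really pins $\psi_{*}(T_pM^2)$ into a fixed factor at every point and that the complementary component of $\psi$, having zero differential, is constant on each connected piece; everything else is routine manipulation of the structure equations (\ref{symf})--(\ref{part}) together with the Gauss and Codazzi equations.
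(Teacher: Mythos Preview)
Your argument follows essentially the same route as the paper's: you split into $\sin(2\theta)=0$ versus $\theta\in(0,\tfrac{\pi}{2})$, use (\ref{parf}) to get $S_{hX}Y=-s(\sigma(X,Y))$, deduce total geodesicity, and then feed the result into Codazzi and Proposition~\ref{pro:geod}. The slice argument for $\lambda=\pm1$ and the Codazzi reduction to $a\lambda+b=0$ are both fine.

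There is, however, one step whose justification as written does not work. You say that because $S_{hX}Y=-s(\sigma(X,Y))$ is symmetric in $X$ and $Y$, ``evaluating at $X=Y=v$ yields $S_{hv}v=0$.'' But symmetry in $X,Y$ evaluated at $X=Y=v$ only gives the tautology $S_{hv}v=S_{hv}v$. What actually forces $S_{hv}v=0$ is the combination of three facts: (i) $S_{hX}Y=S_{hY}X$ from the symmetry of $\sigma$; (ii) self-adjointness $g(S_\xi Y,Z)=g(Y,S_\xi Z)$ of each shape operator; and (iii) the relation $g(S_{hX}Y,Z)=\widetilde g(\sigma(Y,Z),hX)=g(s\sigma(Y,Z),X)=-g(S_{hY}Z,X)$, which comes from (\ref{trans}) together with $S_{hX}Y=-s\sigma(X,Y)$. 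Facts (i) and (ii) make the trilinear form $a(X,Y,Z):=g(S_{hX}Y,Z)$ fully symmetric, while (iii) gives $a(X,Y,Z)=-a(Y,Z,X)$; a fully symmetric form satisfying this must vanish identically, so every $S_{hX}=0$. The paper's own presentation is terse at this point as well, but its polarization step (writing $S_{he_1}e_2=\mu_1e_2$, $S_{he_2}e_1=\mu_2e_1$ and then using $S_{h(e_1+e_2)}(e_1+e_2)=0$) implicitly relies on exactly these identities. Once you replace your one-line justification by this argument, the rest of your proof is correct and coincides with the paper's.
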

\subsection{$f$ is not proportional to the identity}
We first consider the  trivial case $\theta_2 = 0$ and $\theta_1 = \frac{\pi}{2}$. One can then easily prove that $M^2$ is an open part of a Riemannian product of a curve in $M^2(c_1)$ and a curve of $M^2(c_2)$.

Suppose now that $\theta_1 =\frac{\pi}{2}$ and $\theta_2$ is a constant in $(0,\frac{\pi}{2})$. Denote in the following $\theta_2$ by $\theta$.  Consider an adapted orthonormal frame $\{e_1,e_2,\xi_1,\xi_2\}$ such that $fe_1 = -e_1,\,fe_2= \cos(2\theta)e_2,\,t\xi_1 = \xi_1$ and $t\xi_2 = -\cos(2\theta)\xi_2$. Using equations (\ref{symf}), (\ref{trans}) and (\ref{orth}), we see that $he_2 = \pm \sin(2\theta)\xi_2$. We may suppose that $he_2 = \sin(2\theta)\xi_2$. Moreover we can deduce from equations (\ref{symf}) and (\ref{trans}) that $he_1 = 0$. Using equations (\ref{parf}), (\ref{parh}) and (\ref{part}), we obtain that
\begin{gather}
2\cos^2(\theta)\nabla_Xe_2 = \sin(2\theta)S_{\xi_2}X + s(\sigma(X,e_2)),\label{covf2}\\
-\sin(2\theta)g(\nabla_Xe_1,e_2)\xi_2 = t(\sigma(X,e_1)) + \sigma(X,e_1),\label{covh2}\\
2\cos^2(\theta)\nabla_{X}^{\perp}\xi_2 = \sin(2\theta)\sigma(e_2,X) + h(S_{\xi_2}X).\label{covt2}
\end{gather}
From equations (\ref{covf2}) and (\ref{covh2}) we deduce that $g(S_{\xi_2}X,e_2) = g(S_{\xi_1}X,e_1) = 0$ for every $X \in TM^2$. Hence we obtain, using equations (\ref{covf2}) and (\ref{covt2}) and the fact that $g(S_{\xi_2}X,e_2) = g(S_{\xi_1}X,e_1) = 0$, that
\begin{gather*}
g(\nabla_Xe_1,e_2) = -\tan(\theta)\mu_2g(X,e_1),\\
\widetilde{g}(\nabla_X\xi_1,\xi_2) = -\tan(\theta)\mu_1g(X,e_2),
\end{gather*}
where $\mu_1$ is the eigenvalue of $S_{\xi_1}$ and $\mu_2$ is the eigenvalue of $S_{\xi_2}$. Since we know the shape-operators $S_{\xi_1}$ and $S_{\xi_2}$ and the symmetric operator $f$, we can find the Gaussian curvature $K$ of $M^2$. From the equation of Gauss we find that $K = c_2\sin^2(\theta)$. From the equation of Ricci we obtain also easily that $K^{\perp} = |\widetilde{g}(R^{\perp}(e_1,e_2)\xi_2,\xi_1)| = 0$. We summarize the previous in the following proposition.

\begin{Pro}\label{cond1}
Let $M^2$ be a constant angle surface immersed in $M^2(c_1) \times M^2(c_2)$. Suppose that $\lambda_1 = -1$ and $\lambda_2$ is a constant in $(-1,1)$. Then we can find an adapted frame $\{e_1,e_2,\xi_1,\xi_2\}$ such that $fe_1 = -e_1,fe_2 = \cos(2\theta)e_2,\,t\xi_1 = \xi_1$ and $t\xi_2 = -\cos(2\theta)\xi_2$, where $\cos(2\theta) = \lambda_2$ and such that the shape operators $S_{\xi_1}$ and $S_{\xi_2}$ take the following form with respect to the orthonormal frame $\{e_1,e_2\}:$
\begin{equation}\label{shape}
S_{\xi_1} = \left(
  \begin{array}{cc}
    0 & 0 \\
    0 & \mu_1 \\
  \end{array}
\right),
\qquad
S_{\xi_2} = \left(
  \begin{array}{cc}
    \mu_2 & 0 \\
    0 & 0 \\
  \end{array}
\right),
\end{equation}
for some functions $\mu_1$ and $\mu_2$ on $M^2$. Moreover the Levi-Civita connection $\nabla$ of $M^2$ and the normal connection $\nabla^{\perp}$ of $M^2$ in $M^2(c_1) \times M^2(c_2)$ are given by
\begin{gather}
g(\nabla_Xe_1,e_2) = -\tan(\theta)\mu_2g(X,e_1),\label{Levi1}\\
\widetilde{g}(\nabla^{\perp}_X\xi_1,\xi_2) = -\tan(\theta)\mu_1 g(X,e_2)\label{norLevi1}.
\end{gather}
The Gaussian curvature $K$ is given by
\begin{equation*}
K = c_2\sin^2(\theta),
\end{equation*}
and the normal curvature $K^{\perp}$ is equal to $0$.
\end{Pro}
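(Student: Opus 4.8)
The statement collects the computation outlined in the preceding paragraphs, so the plan is simply to organize that computation; the one point that really needs care is the construction of the adapted frame. I would begin with that frame. Since $f$ is symmetric with constant and distinct eigenvalues $\lambda_1=-1$ and $\lambda_2=\cos(2\theta)$, its eigendistributions are smooth line bundles (as recalled above), so locally there are unit vector fields $e_1,e_2$ with $fe_1=-e_1$ and $fe_2=\cos(2\theta)e_2$. Equation (\ref{symf}) gives $she_1=e_1-f^2e_1=0$ and $she_2=\sin^2(2\theta)e_2$, and since by (\ref{trans}) $\|he_i\|^2=g(e_i,she_i)$, we get $\|he_1\|^2=0$ and $\|he_2\|^2=\sin^2(2\theta)$; as $\theta\in(0,\tfrac{\pi}{2})$ we may thus take $he_1=0$ and set $\xi_2:=he_2/\sin(2\theta)$, a local unit normal. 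Applying $hfX+thX=0$ from (\ref{orth}) with $X=e_2$ yields $t\xi_2=-\cos(2\theta)\xi_2$. Let $\xi_1$ be a unit normal orthogonal to $\xi_2$; since $hX$ is always a multiple of $\xi_2$, (\ref{trans}) gives $g(s\xi_1,X)=\widetilde{g}(\xi_1,hX)=0$ for all $X$, so $s\xi_1=0$, whence $t^2\xi_1=\xi_1$ by (\ref{symt}) and $t\xi_1=\pm\xi_1$. The sign is pinned down by $\operatorname{tr}f+\operatorname{tr}t=\operatorname{tr}F=0$ (the trace being computed in the orthonormal frame $\{e_1,e_2,\xi_1,\xi_2\}$), which forces $t\xi_1=\xi_1$. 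Finally $s\xi_2=\sin(2\theta)e_2$ by (\ref{trans}). This yields the adapted frame of the statement.

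With the frame in hand, I would substitute $Y=e_2$ into (\ref{parf}), $Y=e_1$ into (\ref{parh}) and $\xi=\xi_2$ into (\ref{part}); writing the two connections through the $1$-forms $g(\nabla_Xe_2,e_1)$ and $\widetilde{g}(\nabla^{\perp}_X\xi_2,\xi_1)$ and using $1+\cos(2\theta)=2\cos^2\theta$, this reproduces exactly the equations (\ref{covf2})--(\ref{covt2}). Pairing (\ref{covf2}) with $e_2$ and using $g(s\sigma(X,e_2),e_2)=\widetilde{g}(\sigma(X,e_2),he_2)=\sin(2\theta)g(S_{\xi_2}X,e_2)$ forces $g(S_{\xi_2}X,e_2)=0$ for all $X$, so by symmetry of $S_{\xi_2}$ one gets $S_{\xi_2}e_2=0$ and $S_{\xi_2}e_1=\mu_2e_1$. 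Pairing (\ref{covh2}) with $\xi_1$ and using $t\xi_1=\xi_1$ similarly forces $g(S_{\xi_1}X,e_1)=0$, hence $S_{\xi_1}e_1=0$ and $S_{\xi_1}e_2=\mu_1e_2$; this is (\ref{shape}).

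Pairing (\ref{covf2}) with $e_1$ now gives $2\cos^2\theta\,g(\nabla_Xe_2,e_1)=\sin(2\theta)\mu_2g(X,e_1)$ (the $s$-term drops out since $he_1=0$), which is (\ref{Levi1}); pairing (\ref{covt2}) with $\xi_1$ gives $2\cos^2\theta\,\widetilde{g}(\nabla^{\perp}_X\xi_2,\xi_1)=\sin(2\theta)\mu_1g(X,e_2)$ (the term $\widetilde{g}(h(S_{\xi_2}X),\xi_1)$ drops out since $S_{\xi_2}X$ is a multiple of $e_1$), which is (\ref{norLevi1}). For the curvatures I would invoke the formulas of Section 2: by the determinant formula for $K$, $\det S_{\xi_1}=\det S_{\xi_2}=0$, while $\tfrac{I+f}{2}$ has eigenvalues $0$ and $\cos^2\theta$ and $\tfrac{I-f}{2}$ has eigenvalues $1$ and $\sin^2\theta$, so $K=c_2\sin^2\theta$; and evaluating the Ricci equation (\ref{Ricci}) at $(e_1,e_2,\xi_2)$ and pairing with $\xi_1$ makes every term vanish — the curvature term because $he_1=0$, the two second-fundamental-form terms because $S_{\xi_2}e_1=\mu_2e_1$ with $S_{\xi_1}e_1=0$ and $S_{\xi_2}e_2=0$ — so $K^{\perp}=0$.

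The only genuine obstacle, if it deserves the name, is the frame construction of the first paragraph: extracting $he_1=0$ and $s\xi_1=0$ from the algebraic identities (\ref{symf})--(\ref{orth}), and fixing the sign $t\xi_1=+\xi_1$ via $\operatorname{tr}f+\operatorname{tr}t=0$. Everything afterwards is a routine sequence of pairings of the compatibility equations with the frame vectors, the one recurring subtlety being to keep the identity $1+\cos(2\theta)=2\cos^2\theta$ in mind.
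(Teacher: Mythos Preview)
Your proof is correct and follows essentially the same route as the paper's argument (which is the paragraph immediately preceding the proposition): derive (\ref{covf2})--(\ref{covt2}) from (\ref{parf}), (\ref{parh}), (\ref{part}), pair with the frame vectors to obtain the shape operators and the two connection formulas, and then read off $K$ and $K^{\perp}$ from the Gauss and Ricci equations. The one place where you go beyond the paper is the frame construction: the paper simply posits an adapted frame with $t\xi_1=\xi_1$, whereas you actually justify that sign via $\operatorname{tr}f+\operatorname{tr}t=\operatorname{tr}F=0$, which is a nice touch.
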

We obtain a similar proposition if $\lambda_1$ is a constant in $(-1,1)$ and $\lambda_2 = 1$.
\begin{Pro}\label{cond2}
Let $M^2$ be a constant angle surface immersed in $M^2(c_1) \times M^2(c_2)$. Suppose that $\lambda_1$ is a constant in $(-1,1)$ and $\lambda_2 = 1$. Then we can find an adapted frame $\{e_1,e_2,\xi_1,\xi_2\}$ such that $fe_1 = \cos(2\theta)e_1,fe_2 = e_2,\,t\xi_1 = -\cos(2\theta)\xi_1$ and $t\xi_2 = -\xi_2$, where $\cos(2\theta) = \lambda_1$and such that the shape operators $S_{\xi_1}$ and $S_{\xi_2}$ take the following form with respect to the orthonormal frame $\{e_1,e_2\}:$
\begin{equation*}
S_{\xi_1} = \left(
  \begin{array}{cc}
    0 & 0 \\
    0 & \mu_1 \\
  \end{array}
\right),
\qquad
S_{\xi_2} = \left(
  \begin{array}{cc}
    \mu_2 & 0 \\
    0 & 0 \\
  \end{array}
\right),
\end{equation*}
for some functions $\mu_1$ and $\mu_2$ on $M^2$. Moreover the Levi-Civita connection $\nabla$ of $M^2$ and the normal connection $\nabla^{\perp}$ of $M^2$ in $M^2(c_1) \times M^2(c_2)$ are given by
\begin{gather}
g(\nabla_Xe_1,e_2) = -\cot(\theta)\mu_1g(X,e_2),\label{Levi2}\\
\widetilde{g}(\nabla^{\perp}_X\xi_1,\xi_2) = -\cot(\theta)\mu_2 g(X,e_1)\label{norLevi2}.
\end{gather}
The Gaussian curvature $K$ is given by
\begin{equation}
K = c_1\cos^2(\theta),
\end{equation}
and the normal curvature $K^{\perp}$ is equal to $0$.
\end{Pro}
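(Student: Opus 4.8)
The plan is to repeat, with the roles of the two factors and of the eigenvalues $\pm1$ interchanged, the argument that proved Proposition~\ref{cond1}; no genuinely new idea is needed.

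\emph{Setting up the frame.} Since $\lambda_2=1=\cos 0$ and $\lambda_1=\cos(2\theta)$ with $\theta\in(0,\frac{\pi}{2})$, I would take unit eigenvectors $e_1\in T_{\lambda_1}$ and $e_2\in T_{\lambda_2}$, which are automatically orthogonal because $f$ is symmetric and $\lambda_1\neq\lambda_2$. Equations (\ref{symf}) and (\ref{trans}) give $\|he_2\|^2=1-\lambda_2^2=0$, so $he_2=0$, and $\|he_1\|^2=1-\lambda_1^2=\sin^2(2\theta)$; after replacing $\xi_1$ by $-\xi_1$ if necessary one may set $\xi_1=he_1/\sin(2\theta)$ and complete $\{e_1,e_2,\xi_1,\xi_2\}$ to an adapted orthonormal frame. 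Then (\ref{orth}) forces $t\xi_1=-\cos(2\theta)\xi_1$, (\ref{trans}) gives $s\xi_1=\sin(2\theta)e_1$ and $s\xi_2=0$, and (\ref{symt}) then gives $t\xi_2=\pm\xi_2$; the sign is fixed to $t\xi_2=-\xi_2$ because $F$ has vanishing trace on the bundle $\psi^{*}T(M^2(c_1)\times M^2(c_2))$ along $M^2$ (eigenvalues $+1,+1,-1,-1$), so $\operatorname{tr}f+\operatorname{tr}t=0$ while $\operatorname{tr}f=\cos(2\theta)+1$.

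\emph{Using the structure equations.} I would then feed this frame into (\ref{parf})--(\ref{part}). Writing $\nabla_Xe_1=g(\nabla_Xe_1,e_2)e_2$, the $e_1$-component of (\ref{parf}) with $Y=e_1$ gives $g(S_{\xi_1}X,e_1)=0$ for all $X$, i.e.\ $S_{\xi_1}e_1=0$, and its $e_2$-component gives exactly (\ref{Levi2}) once $\cos(2\theta)-1=-2\sin^2\theta$ and $\sin(2\theta)=2\sin\theta\cos\theta$ are used. Next, (\ref{parh}) with $Y=e_2$ (so $he_2=0$, $fe_2=e_2$): comparing its $\xi_2$-component gives $g(S_{\xi_2}X,e_2)=0$, i.e.\ $S_{\xi_2}e_2=0$. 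Symmetry of $S_{\xi_1}$ and $S_{\xi_2}$ then forces them to have the stated diagonal forms, with eigenvalue $\mu_1$ on $e_2$ and $\mu_2$ on $e_1$ respectively. Finally, (\ref{part}) with $\xi=\xi_1$ (using $s\xi_1=\sin(2\theta)e_1$, $S_{\xi_1}e_1=0$, $he_2=0$): comparing the $\xi_2$-component yields $\widetilde g(\nabla^{\perp}_X\xi_1,\xi_2)=-\cot(\theta)\,g(S_{\xi_2}X,e_1)=-\cot(\theta)\mu_2\,g(X,e_1)$, which is (\ref{norLevi2}).

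\emph{Curvatures.} It then remains only to read off $K$ and $K^{\perp}$. Since $\det S_{\xi_1}=\det S_{\xi_2}=0$ and $f$ has eigenvalues $\cos(2\theta),1$, so that $\det\frac{I+f}{2}=\cos^2\theta$ and $\det\frac{I-f}{2}=0$, the Gauss formula for $K$ recalled in the preliminaries gives $K=c_1\cos^2\theta$. Evaluating the Ricci equation (\ref{Ricci}) on $(e_1,e_2,\xi_2)$: the curvature terms involving $h$ vanish since $he_2=0$ and $\widetilde g(he_1,\xi_2)=0$, while $\sigma(S_{\xi_2}e_1,e_2)=\mu_2\sigma(e_1,e_2)=0$ (because $\sigma(e_1,e_2)=0$ from the forms of $S_{\xi_1},S_{\xi_2}$) and $\sigma(S_{\xi_2}e_2,e_1)=0$; hence $R^{\perp}(e_1,e_2)\xi_2=0$ and $K^{\perp}=0$. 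The only point that requires care is the bookkeeping in the first step that fixes the signs and the eigenvalue pattern of $t$ in the adapted frame; from there on everything is a direct specialization of the compatibility equations, word for word parallel to the proof of Proposition~\ref{cond1}.
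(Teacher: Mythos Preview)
Your proposal is correct and follows exactly the approach the paper intends: the paper merely states that Proposition~\ref{cond2} is obtained ``similarly'' to Proposition~\ref{cond1}, and you have carried out that parallel computation in detail, including the trace argument for fixing $t\xi_2=-\xi_2$ and the componentwise use of (\ref{parf}), (\ref{parh}), (\ref{part}) to pin down the shape operators and connection forms. There is nothing to add.
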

Finally we consider the case for which $\lambda_1 = \cos(2\theta_1)$ and $\lambda_2 = \cos(2\theta_2)$ are constant, $\lambda_2 - \lambda_1 > 0$ and $\lambda_1,\lambda_2 \in (-1,1)$. Let $\{e_1,e_2,\xi_1,\xi_2\}$ be an adapted frame such that $fe_i = \cos(2\theta_i)e_i$ and $t\xi_i = -\cos(2\theta_i)\xi_i$ for $i=1,2$. Using equations (\ref{parf}) and (\ref{part}) and by similar reasoning as before, we obtain the next proposition.
\begin{Pro}
Let $M^2$ be a surface immersed in $M^2(c_1)\times M^2(c_2)$. Suppose that $\lambda_1$ and $\lambda_2$ are constants in $(-1,1)$ and $\lambda_2 - \lambda_1 >0$. Then we can find an adapted orthonormal frame $\{e_1,e_2,\xi_1,\xi_2\}$ such that $fe_i = \cos(2\theta_i) e_i$ and $t\xi_i = -\cos(2\theta_i)\xi_i$, where $\cos(2\theta_i) = \lambda_i$ for $i = 1,2$ and such that the shape operators $S_{\xi_1}$ and $S_{\xi_2}$ take the following form with respect to the orthonormal frame $\{e_1,e_2\}$:
\begin{equation*}
S_{\xi_1} = \left(
  \begin{array}{cc}
    0 & 0 \\
    0 & \mu_1 \\
  \end{array}
\right),
\qquad
S_{\xi_2} = \left(
  \begin{array}{cc}
    \mu_2 & 0 \\
    0 & 0 \\
  \end{array}
\right),
\end{equation*}
for some functions $\mu_1$ and $\mu_2$ on $M^2$. Moreover the Levi-Civita connection $\nabla$ of $M^2$ and the normal connection $\nabla^{\perp}$ of $M^2$ in $M^2(c_1)\times M^2(c_2)$ are given by:
\begin{gather}
g(\nabla_Xe_1,e_2) = \frac{\cos(\theta_1)\sin(\theta_1)\mu_1g(X,e_2) + \cos(\theta_2)\sin(\theta_2)\mu_2g(X,e_1)}{\cos^2(\theta_1) - \cos^2(\theta_2)},\label{Levi4}\\
\widetilde{g}(\nabla^{\perp}_X\xi_1,\xi_2) = \frac{\cos(\theta_1)\sin(\theta_1)\mu_2g(X,e_1) + \cos(\theta_2)\sin(\theta_2)\mu_1g(X,e_2)}{\cos^2(\theta_1) - \cos^2(\theta_2)}.\label{norLevi4}
\end{gather}
The Gaussian curvature $K$ is given by
\begin{equation}\label{Gauss3}
K = c_1\cos^2(\theta_1)\cos^2(\theta_2) + c_2\sin^2(\theta_1)\sin^2(\theta_2),
\end{equation}
and  the normal curvature $K^{\perp}$ equals $|\frac{c_1 + c_2}{4}|\sin(2\theta_1)\sin(2\theta_2)$.
\end{Pro}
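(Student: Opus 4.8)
The plan is to work entirely within the structure equations already set up, starting from the adapted frame $\{e_1,e_2,\xi_1,\xi_2\}$ whose existence is asserted. First I would establish the shape of $S_{\xi_1}$ and $S_{\xi_2}$. From equation (\ref{parf}) applied with $Y=e_i$ and from (\ref{orth}) one computes $he_i$: using (\ref{symf}), (\ref{trans}) and (\ref{orth}) together with $fe_i=\cos(2\theta_i)e_i$ and $t\xi_i=-\cos(2\theta_i)\xi_i$, one finds $he_1=\pm\sin(2\theta_1)\xi_1$ and $he_2=\pm\sin(2\theta_2)\xi_2$, and after fixing signs we may take $he_i=\sin(2\theta_i)\xi_i$, hence also $s\xi_i=\sin(2\theta_i)e_i$. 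Then (\ref{parf}) with $X=e_j$, $Y=e_i$ reads $(\cos(2\theta_i)-\cos(2\theta_j))\nabla_{e_j}e_i$ paired against the frame, and the component along $e_k$ forces $g(S_{\xi_i}e_i,e_k)$-type terms; concretely, evaluating (\ref{parf}) in the $(1,1)$, $(2,2)$ slots gives $g(S_{\xi_1}e_1,e_1)=0$ and $g(S_{\xi_2}e_2,e_2)=0$ after using $\theta_1\neq\theta_2$, which is exactly the claimed form of the shape operators, with $\mu_1=g(S_{\xi_1}e_2,e_2)$ and $\mu_2=g(S_{\xi_2}e_1,e_1)$. One must also check the off-diagonal entries vanish; this follows from the same equations by inspecting the cross components, and from symmetry of $\sigma$.

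Next I would extract the connection coefficients. Write $\nabla_Xe_1=g(\nabla_Xe_1,e_2)e_2$ and $\nabla^{\perp}_X\xi_1=\widetilde g(\nabla^{\perp}_X\xi_1,\xi_2)\xi_2$. Plug $Y=e_1$ into (\ref{parf}): the left side is $(\nabla_Xf)(e_1)=\nabla_X(fe_1)-f(\nabla_Xe_1)=(\cos(2\theta_1)-\cos(2\theta_2))g(\nabla_Xe_1,e_2)e_2$ since $\theta_1,\theta_2$ are constant; the right side is $S_{he_1}X+s(\sigma(X,e_1))=\sin(2\theta_1)S_{\xi_1}X+\sin(2\theta_2)\,g(S_{\xi_2}X,e_1)\,e_2$, whose $e_2$-component is $\sin(2\theta_1)\mu_1 g(X,e_2)+\sin(2\theta_2)\mu_2 g(X,e_1)$ using the matrix forms above. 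Dividing by $\cos(2\theta_1)-\cos(2\theta_2)=2(\cos^2\theta_1-\cos^2\theta_2)$ and rewriting $\sin(2\theta_i)=2\sin\theta_i\cos\theta_i$ yields (\ref{Levi4}). The normal-connection formula (\ref{norLevi4}) comes identically from (\ref{part}) with $\xi=\xi_1$: the left side gives $(\cos(2\theta_2)-\cos(2\theta_1))\widetilde g(\nabla^{\perp}_X\xi_1,\xi_2)\xi_2$ and the right side $-\sigma(s\xi_1,X)-h(S_{\xi_1}X)=-\sin(2\theta_1)\sigma(e_1,X)-\sin(2\theta_2)g(S_{\xi_1}X,e_2)e_2\cdot(\text{coeff})$, again isolating the $\xi_2$-component and simplifying.

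Then the curvatures are pure bookkeeping. For $K$ I would use the Gauss-equation consequence stated just before the Proposition on surfaces in $\Mcc$: $K=\det S_{\xi_1}+\det S_{\xi_2}+c_1\det\frac{I+f}{2}+c_2\det\frac{I-f}{2}$. Here $\det S_{\xi_i}=0$ by the triangular-with-one-zero form, $\det\frac{I+f}{2}=\cos^2\theta_1\cos^2\theta_2$ and $\det\frac{I-f}{2}=\sin^2\theta_1\sin^2\theta_2$ since the eigenvalues of $\frac{I\pm f}{2}$ are $\frac{1\pm\cos2\theta_i}{2}$, giving (\ref{Gauss3}). For $K^{\perp}$ I would evaluate (\ref{Ricci}) on $(e_1,e_2)$ paired with $\xi_1$: the term $\sigma(S_{\xi_1}e_2,e_1)-\sigma(S_{\xi_1}e_1,e_2)=\mu_1\sigma(e_2,e_1)$ contributes along $\xi_2$ the value $\mu_1 g(S_{\xi_2}e_2,e_1)=0$, and similarly the $\sigma(S_{\xi_i}\cdot,\cdot)$ terms drop, while $a(\widetilde g(he_2,\xi_1)he_1-\widetilde g(he_1,\xi_1)he_2)$ pairs with $\xi_2$; only the cross terms survive, and with $he_i=\sin(2\theta_i)\xi_i$ one gets $\widetilde g(R^{\perp}(e_1,e_2)\xi_1,\xi_2)=-a\sin(2\theta_1)\sin(2\theta_2)$, whence $K^{\perp}=|a|\sin(2\theta_1)\sin(2\theta_2)=|\tfrac{c_1+c_2}{4}|\sin(2\theta_1)\sin(2\theta_2)$.

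The main obstacle, such as it is, is the sign and index bookkeeping in identifying $he_i$ and in keeping track of which $\theta_i$ sits in which slot when dividing by $\cos(2\theta_1)-\cos(2\theta_2)$: one has to be careful that the frame can genuinely be chosen so that $he_1\parallel\xi_1$ and $he_2\parallel\xi_2$ simultaneously (this uses that $f$ and $t$ are simultaneously "paired" via $h$, a consequence of (\ref{orth}) together with $hf+th=0$), and that the residual sign ambiguities $he_i=\pm\sin(2\theta_i)\xi_i$ can be absorbed by orienting $\xi_1,\xi_2$. Once that normalization is pinned down, everything else is a direct substitution into (\ref{parf}), (\ref{part}), the Gauss formula for $K$, and (\ref{Ricci}), exactly parallel to the computations already carried out in Propositions \ref{cond1} and \ref{cond2}.
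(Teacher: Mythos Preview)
Your proposal is correct and follows essentially the same approach as the paper, which for this proposition gives no detailed proof but simply says to use equations (\ref{parf}) and (\ref{part}) ``by similar reasoning as before'' (i.e.\ as in Propositions \ref{cond1} and \ref{cond2}). Your sketch spells out precisely that reasoning: identify $he_i=\sin(2\theta_i)\xi_i$ via (\ref{symf}), (\ref{trans}), (\ref{orth}); read off $g(S_{\xi_1}X,e_1)=0$ and $g(S_{\xi_2}X,e_2)=0$ from the $e_i$-components of (\ref{parf}); extract the connection $1$-forms from the remaining components of (\ref{parf}) and (\ref{part}); and compute $K$ and $K^{\perp}$ from the Gauss and Ricci equations using $\det S_{\xi_i}=0$. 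One small remark: the vanishing of $g(S_{\xi_1}e_1,e_1)$ and the off-diagonal entries does not actually require $\theta_1\neq\theta_2$, only $\sin(2\theta_i)\neq 0$, since the $e_1$-component of the right-hand side of (\ref{parf}) with $Y=e_1$ is $2\sin(2\theta_1)g(S_{\xi_1}X,e_1)$ while the left-hand side has no $e_1$-component; but this does not affect your argument.
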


\section{Existence results}

We will need the following existence results in the next section.
\begin{Pro}\label{Exis1}
Let $c_1,c_2 \in \mathbb{R}$, not both $0$, $\theta_1,\theta_2 \in (0,\frac{\pi}{2})$ with $\theta_1 > \theta_2$. Define constants $a_1,a_2,A_1$ and $A_2$ by
\[a_1 = \frac{\sin(2\theta_1)}{\cos(2\theta_1) - \cos(2\theta_2)},\qquad a_2 = \frac{\sin(2\theta_2)}{\cos(2\theta_2) - \cos(2\theta_1)},\]
\[A_1 = (\cos^2(\theta_2) - \cos^2(\theta_1))(c_1\cos^2(\theta_2) - c_2\sin^2(\theta_2)),\]
and
\[A_2 = (\cos^2(\theta_1) - \cos^2(\theta_2))(c_1\cos^2(\theta_1) - c_2\sin^2(\theta_1)).\]
Let $\mu_1 = \mu_1(u,v)$ and $\mu_2 = \mu_2(u,v)$ be real-valued functions defined on a simply connected open subset of $\mathbb{R}^2$ which satisfy
\begin{equation}\label{exiscon1}
-\frac{a_1}{\sqrt{\mu_2^2 + A_2}} = \frac{(\mu_1)_u}{\mu_1^2 +A_1},\qquad -\frac{a_2}{\sqrt{\mu_1^2 + A_1}} =  \frac{(\mu_2)_v}{\mu_2^2 +A_2}.
\end{equation}
Then the Riemannian manifold $M^2 = (U,g)$ with the Riemannian metric $g = \frac{du^2}{\mu_2^2 + A_2} + \frac{dv^2}{\mu_1^2 + A_1}$ is a surface of constant curvature $c_1\cos^2(\theta_1)\cos^2(\theta_2) + c_2\sin^2(\theta_1)\sin^2(\theta_2)$. Define now on the vector bundle $TU$ a second metric $\widetilde{g}$ by $\sin^2(2\theta_1)\frac{du^2}{\mu_2^2 + A_2} + \sin^2(2\theta_2)\frac{dv^2}{\mu_1^2 + A_1}$ and denote this Riemannian vector bundle by $T^{\perp}M^2$. Let $f: TM^2 \rightarrow TM^2$, $t:T^{\perp}M^2 \rightarrow T^{\perp}M^2$, and $h :TM^2 \rightarrow T^{\perp}M^2$ be, respectively, $(1,1)$-tensors over $M^2$ defined by
\begin{equation*}
\left(
  \begin{array}{cc}
    \cos(2\theta_1) & 0 \\
    0 & \cos(2\theta_2) \\
  \end{array}
\right),\qquad
\left(
  \begin{array}{cc}
    -\cos(2\theta_1) & 0 \\
    0 & -\cos(2\theta_2) \\
  \end{array}
\right),\qquad
\left(
  \begin{array}{cc}
    1 & 0 \\
    0 & 1 \\
  \end{array}
\right),
\end{equation*}
with respect to $\{\partial_u,\partial_v\}$ and $\{\widetilde{\partial}_u,\widetilde{\partial}_v\}$, where $\widetilde{\partial}_u,\widetilde{\partial}_v \in T^{\perp}M^2$ and dual to the forms $du$ and $dv$. Finally define a symmetric $(1,2)$ tensor $\sigma$ with values in $T^{\perp}M^2$ and a connection $\nabla^{\perp}$ on $T^{\perp}M^2$ compatible with the metric by
\begin{equation}\label{secfundexis1}
\begin{split}
\sigma(\partial_u,\partial_u) = \frac{\mu_2\sqrt{\mu_1^2 + A_1}}{\sin(2\theta_2)(\mu_2^2 + A_2)}h\partial_v,\:\sigma(\partial_u,\partial_v) = 0,\\
\sigma(\partial_v,\partial_v) = \frac{\mu_1\sqrt{\mu_2^2 + A_2}}{\sin(2\theta_1)(\mu_1^2) + A_1}h\partial_u,
\end{split}
\end{equation}
\begin{equation}\label{norconexis1}
\begin{split}
\nabla^{\perp}_{\partial_u}h\partial_u = -\frac{\mu_2(\mu_2)_u}{\mu_2^2  + A_2}h\partial_u + \frac{a_1\sin(2\theta_1)\mu_2\sqrt{\mu_1^2 + A_1}}{\sin(2\theta_2)(\mu_2^2 + A_2)}h\partial_v,\notag\\
\nabla^{\perp}_{\partial_u}h\partial_v = \nabla^{\perp}_{\partial_v}h\partial_u = h(\nabla_{\partial_u}\partial_v) = h(\nabla_{\partial_v}\partial_u),\notag\\
\nabla^{\perp}_{\partial_v}h\partial_v =  \frac{a_2\sin(2\theta_2)\mu_1\sqrt{\mu_2^2 + A_2}}{\sin(2\theta_1)(\mu_1^2 + A_1)}h\partial_u - \frac{\mu_1(\mu_1)_v}{\mu_1^2 + A_1}h\partial_v,\notag
\end{split}
\end{equation}
where $\nabla$ is the Levi-Civita connection of $M^2$. Then $(M^2,g,T^{\perp}M^2,\widetilde{g},\sigma,\nabla^{\perp}f,h,t)$ satisfies the compatibility equations of $M^2(c_1)\times M^2(c_2)$ and hence there exists an isometric immersion of $M^2$ in $M^2(c_1)\times M^{2}(c_2)$ such that this surface is a constant angle surface and is unique up to isometries of $M^2(c_1)\times M^2(c_2)$.
\end{Pro}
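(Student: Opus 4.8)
The plan is to recast the prescribed data in an adapted orthonormal framing, verify the six compatibility equations of $M^2(c_1)\times M^2(c_2)$ directly, and then invoke the Bonnet-type existence theorem recalled above. Write $E=(\mu_2^2+A_2)^{-1}$, $G=(\mu_1^2+A_1)^{-1}$, so that $g=E\,du^2+G\,dv^2$, and set $e_1=\sqrt{\mu_2^2+A_2}\,\partial_u$, $e_2=\sqrt{\mu_1^2+A_1}\,\partial_v$ (a $g$-orthonormal frame of $TM^2$), together with $\xi_1=\tfrac{\sqrt{\mu_2^2+A_2}}{\sin(2\theta_1)}\widetilde\partial_u$, $\xi_2=\tfrac{\sqrt{\mu_1^2+A_1}}{\sin(2\theta_2)}\widetilde\partial_v$ (a $\widetilde g$-orthonormal frame of $T^{\perp}M^2$). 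In this framing the tensors of the proposition become $fe_i=\cos(2\theta_i)e_i$, $t\xi_i=-\cos(2\theta_i)\xi_i$, $he_i=\sin(2\theta_i)\xi_i$, hence, by the defining relation \eqref{trans} of $s$, $s\xi_i=\sin(2\theta_i)e_i$, while \eqref{secfundexis1} translates into $S_{\xi_1}=\operatorname{diag}(0,\mu_1)$, $S_{\xi_2}=\operatorname{diag}(\mu_2,0)$ with $\sigma(e_1,e_2)=0$. Thus the abstract data coincides exactly with the structure found in Section~3 for constant angle surfaces with $\lambda_1,\lambda_2\in(-1,1)$; in particular the algebraic conditions \eqref{symf}, \eqref{symt}, \eqref{orth} (and the rank-$2$ conditions on $\tfrac{I\pm F}{2}$) hold by inspection using $\cos^2(2\theta_i)+\sin^2(2\theta_i)=1$.

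Next I would compute the Levi-Civita connection $\nabla$ of $g$ in the coordinates $(u,v)$; substituting the first-order system \eqref{exiscon1} for $(\mu_1)_u$ and $(\mu_2)_v$ collapses the Christoffel symbols and reproduces exactly \eqref{Levi4} (for instance $g(\nabla_{e_1}e_1,e_2)=-a_2\mu_2$, the case $X=e_1$). Feeding \eqref{exiscon1} into the standard formula for the Gaussian curvature of an orthogonal metric likewise eliminates every remaining derivative and yields $K=a_1^2A_1+a_2^2A_2$; a short trigonometric identity, using the definitions of $a_i,A_i$ and the half-angle formulas, rewrites this as $c_1\cos^2(\theta_1)\cos^2(\theta_2)+c_2\sin^2(\theta_1)\sin^2(\theta_2)$. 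This is the asserted constant curvature, and since $\det S_{\xi_1}=\det S_{\xi_2}=0$ it is at the same time the Gauss equation \eqref{Gauss} (equivalently \eqref{Gauss3}) for the given data. One then checks that the connection $\nabla^{\perp}$ defined by \eqref{norconexis1} is well defined and compatible with $\widetilde g$ — the metric compatibility reduces to the identity $a_1\sin(2\theta_2)+a_2\sin(2\theta_1)=0$ — and that its connection form relative to $\{\xi_1,\xi_2\}$ equals \eqref{norLevi4}. Granting this, equations \eqref{parf}, \eqref{parh} and \eqref{part} each become, after inserting $he_i=\sin(2\theta_i)\xi_i$ and $t\xi_i=-\cos(2\theta_i)\xi_i$, an identity equivalent to \eqref{Levi4} or \eqref{norLevi4}, hence true; and the Ricci equation \eqref{Ricci} holds because $\widetilde g(R^{\perp}(e_1,e_2)\xi_1,\xi_2)$, evaluated from \eqref{norLevi4} and \eqref{exiscon1}, equals $-\tfrac{c_1+c_2}{4}\sin(2\theta_1)\sin(2\theta_2)$, which is exactly the right-hand side of \eqref{Ricci} on the frame $\{e_1,e_2,\xi_1,\xi_2\}$.

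The remaining and main step is the Codazzi equation \eqref{Codazzi}. Since its left-hand side is skew in the first two arguments and $\dim M^2=2$, it is enough to test $(X,Y,Z)=(e_1,e_2,e_1)$ and $(e_1,e_2,e_2)$. Expanding $(\nabla\sigma)(X,Y,Z)=\nabla^{\perp}_X\sigma(Y,Z)-\sigma(\nabla_XY,Z)-\sigma(Y,\nabla_XZ)$ with the explicit $\sigma$, $\nabla$ and $\nabla^{\perp}$ and comparing $\xi_1$- and $\xi_2$-components: one component collapses to an identity already guaranteed by \eqref{Levi4} and \eqref{norLevi4}, while the other, after substituting \eqref{exiscon1} for $e_1(\mu_1)$ or $e_2(\mu_2)$, reduces to an equality of the shape $a_iA_i=-\bigl(\tfrac{c_1+c_2}{4}\cos(2\theta_j)+\tfrac{c_1-c_2}{4}\bigr)\sin(2\theta_i)$ with $\{i,j\}=\{1,2\}$ — once again a trigonometric identity in $\theta_1,\theta_2$ coming from the definitions of $a_i$ and $A_i$. (Note that $(\mu_1)_v$ and $(\mu_2)_u$ never enter; this is precisely the extra freedom that will later correspond to a B\"acklund transformation.)

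With all the compatibility equations of $M^2(c_1)\times M^2(c_2)$ verified, the existence theorem recalled above produces an isometric immersion $\psi\colon M^2\to M^2(c_1)\times M^2(c_2)$ with second fundamental form $\sigma$ and satisfying \eqref{structure}; since the eigenvalues of $f$ are the constants $\cos(2\theta_1),\cos(2\theta_2)$, the angle functions of $\psi(M^2)$ equal the constants $\theta_1,\theta_2$, so $\psi(M^2)$ is a constant angle surface. Uniqueness up to isometries of $M^2(c_1)\times M^2(c_2)$ then follows from the congruence theorem recalled above, applied to two immersions realising this data: they share $f$ and carry a normal-bundle isometry intertwining $\sigma$, $\nabla^{\perp}$ and $h$. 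The genuinely delicate part is the Codazzi verification — keeping track of the two normal directions and invoking the correct component of \eqref{exiscon1}; everything else is either routine algebra or a direct substitution into the structure formulas of Section~3.
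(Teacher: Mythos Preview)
Your proposal is correct and follows exactly the same strategy as the paper: compute the Levi-Civita connection from the metric using \eqref{exiscon1}, verify by direct substitution that the full set of compatibility equations \eqref{Gauss}--\eqref{Ricci}, \eqref{parf}--\eqref{part} holds for the prescribed data, and then invoke Theorems~1 and~2. The paper's own proof merely asserts that these verifications go through ``by direct straightforward computations'' and writes out only the Levi-Civita connection explicitly; your outline of the orthonormal reframing, the reduction of Codazzi to the two independent normal components, and the trigonometric identities needed at each step is precisely the detail the paper suppresses.
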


\begin{proof}
From (\ref{exiscon1}) and a direct computation we know that the Riemannian metric $g$ has constant curvature $c_1\cos^2(\theta_1)\cos^2(\theta_2) + c_2\sin^2(\theta_1)\sin^2(\theta_2)$ and the Levi-Civita connection satisfies
\begin{equation*}
\begin{split}
\nabla_{\partial_u}\partial_u = -\frac{\mu_2(\mu_2)_u}{\mu_2^2 + A_2} \partial_u - \frac{a_2\mu_2\sqrt{\mu_1^2 + A_1}}{\mu_2^2 + A_2}\partial_v,\\
\nabla_{\partial_u}\partial_v = \nabla_{\partial_v}\partial_u  = \frac{a_2\mu_2}{\sqrt{\mu_1^2 + A_1}}\partial_u + \frac{a_1\mu_1}{\sqrt{\mu_2^2 + A_2}}\partial_v,\\
\nabla_{\partial_v}\partial_v = -\frac{a_1\mu_1\sqrt{\mu_2^2 + A_2}}{\mu_1^2 + A_1} - \frac{\mu_1(\mu_1)_v}{\mu_1^2 + A_1}\partial_v.
\end{split}
\end{equation*}
 We have already defined a second metric $\widetilde{g}$ on the vector bundle $TU$, and denoted this Riemannian vector bundle by $T^{\perp}M^2$, together with a connection $\nabla^{\perp}$ that is compatible with this metric. Let $f,t$ and $h$ be $(1,1)$-tensors as defined above and $\sigma$ the symmetric $(1,2)$ tensor defined by (\ref{secfundexis1}). By direct straightforward computations we can see that $(M^2,g,T^{\perp}M^2,\widetilde{g},\nabla^{\perp},\sigma,f,h,t)$ satisfies the compatibility equations for $M^2(c_1) \times M^2(c_2)$. Hence there exists an isometric immersion of $M^2$ into $M^2(c_1) \times M^2(c_2)$. Moreover, we can deduce from equation (\ref{structure}) that $M^2$ is a constant angle surface in $M^2(c_1) \times M^2(c_2)$. We can also conclude from Theorem $2$ that this immersion with the given second fundamental form and normal connection is unique up to rigid motions of $M^2(c_1) \times M^2(c_2)$.
\end{proof}
The next two propositions can be proven analogously as the previous one.
\begin{Pro}\label{Exis2}
Let $c_1,c_2 \in \mathbb{R}$, not both $0$, $\theta_1,\theta_2 \in (0,\frac{\pi}{2})$ with $\theta_1 > \theta_2$. Define constants $a_1,a_2,A_1$ and $A_2$ as in Proposition $\ref{Exis1}$. Moreover we suppose that $A_1 < 0$. Let $\mu  = \mu(u,v)$ and $G = G(u,v)$ be real-valued functions which satisfy
\begin{equation}\label{exiscon2}
-a_2\sqrt{G} = \frac{(\mu)_v}{\mu^2 + A_2},\qquad a_1\sqrt{\frac{-A_1}{\mu^2 + A_2}} = \frac{G_u}{2G}.
\end{equation}
Then the Riemannian manifold $M^2 = (U,g)$ with the Riemannian metric $g = \frac{du^2}{\mu^2 + A_2} +Gdv^2$
is a surface of constant curvature $c_1\cos^2(\theta_1)\cos^2(\theta_2) + c_2\sin^2(\theta_1)\sin^2(\theta_2)$. Define now on the vector bundle $TU$ a second metric $\widetilde{g}$ by $\sin^2(2\theta_1)\frac{du^2}{\mu_2^2 + A_2} + \sin^2(2\theta_2)Gdv^2$ and denote this Riemannian vector bundle by $T^{\perp}M^2$. Let $f: TM^2 \rightarrow TM^2$, $t:T^{\perp}M^2 \rightarrow T^{\perp}M^2$, and $h :TM^2 \rightarrow T^{\perp}M^2$ be $(1,1)$-tensors over $M^2$ as defined above in Proposition $\ref{Exis1}$. Finally define a symmetric $(1,2)$ tensor $\sigma$ with values in $T^{\perp}M^2$ and a connection $\nabla^{\perp}$ on $T^{\perp}M^2$ compatible with the metric by
\begin{equation}\label{secfundexis2}
\begin{split}
\sigma(\partial_u,\partial_u) = \frac{\mu_2}{\sin(2\theta_2)(\mu_2^2 + A_2)\sqrt{G}}h\partial_v,\:\sigma(\partial_u,\partial_v) = 0,\\
\sigma(\partial_v,\partial_v) = \frac{G\sqrt{-A_1(\mu_2^2 + A_2)}}{\sin(2\theta_1)}h\partial_u,
\end{split}
\end{equation}
\begin{equation*}
\begin{split}
\nabla^{\perp}_{\partial_u}h\partial_u = -\frac{\mu_2(\mu_2)_u}{\mu_2^2  + A_2}h\partial_u + \frac{a_1\sin(2\theta_1)\mu_2}{\sin(2\theta_2)(\mu_2^2 + A_2)\sqrt{G}}h\partial_v,\\
\nabla^{\perp}_{\partial_u}h\partial_v = \nabla^{\perp}_{\partial_v}h\partial_u = h(\nabla_{\partial_u}\partial_v) = h(\nabla_{\partial_v}\partial_u),\\
\nabla^{\perp}_{\partial_v}h\partial_v =  \frac{a_2\sin(2\theta_2)\mu_1\sqrt{-A_1(\mu_2^2 + A_2)}G}{\sin(2\theta_1)}h\partial_u  + \frac{G_v}{2G}h\partial_v,
\end{split}
\end{equation*}
where $\nabla$ is the Levi-Civita connection of $M^2$. Then $(M^2,g,T^{\perp}M^2,\widetilde{g},\sigma,\nabla^{\perp},f,h,t)$ satisfies the compatibility equations of $M^2(c_1)\times M^2(c_2)$ and hence there exists an isometric immersion of $M^2$ in $M^2(c_1)\times M^{2}(c_2)$ such that this surface is a constant angle surface and is unique up to isometries of $M^2(c_1)\times M^2(c_2)$.
\end{Pro}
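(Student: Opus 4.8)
The plan is to follow verbatim the scheme of the proof of Proposition \ref{Exis1}; the only structural difference is that, because the hypothesis $A_1<0$ makes $(\mu_1^2+A_1)^{-1}$ unusable as the $dv^2$-coefficient of the metric, one replaces it by the free function $G$. Throughout the argument $G$ plays the role that $(\mu_1^2+A_1)^{-1}$ plays in Proposition \ref{Exis1} and $\mu$ plays the role of $\mu_2$ (so the coefficients written with $\mu_2$ in the statement are read with $\mu_2=\mu$).

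First I would record the Riemannian data of $(U,g)$: writing $g=E\,du^2+G\,dv^2$ with $E=(\mu^2+A_2)^{-1}$ and computing the Christoffel symbols expresses $\nabla$ in closed form in terms of $\mu$, $G$ and their first derivatives; substituting the two relations (\ref{exiscon2}) brings these coefficients into exactly the shape of those displayed in the proof of Proposition \ref{Exis1} (under the substitutions above), and the Brioschi formula for the Gaussian curvature collapses, after a second use of (\ref{exiscon2}), to the constant $c_1\cos^2(\theta_1)\cos^2(\theta_2)+c_2\sin^2(\theta_1)\sin^2(\theta_2)$. Next one checks that $T^\perp M^2=TU$, with the metric $\widetilde g$ and the connection $\nabla^\perp$ of the statement, is a Riemannian vector bundle with a metric connection, and that the purely algebraic relations (\ref{symf}), (\ref{symt}) and (\ref{orth}) hold for the diagonal tensors $f$, $t$ and for $h$ (the identity map sending $\partial_u,\partial_v$ to $\widetilde{\partial}_u,\widetilde{\partial}_v$, with $s$ defined from $h$ by (\ref{trans})), since $\cos^2(2\theta_i)+\sin^2(2\theta_i)=1$; also $\tfrac{I\pm F}{2}$ have rank $2$ because $\theta_i\in(0,\tfrac{\pi}{2})$.

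The computational heart of the argument, which I expect to be the main obstacle, is the verification of the six compatibility equations (\ref{Gauss}), (\ref{Codazzi}), (\ref{Ricci}), (\ref{parf}), (\ref{parh}) and (\ref{part}) by evaluating both sides on the frame $\{\partial_u,\partial_v\}$ (and $\{\widetilde{\partial}_u,\widetilde{\partial}_v\}$ for the normal ones) and comparing. Equation (\ref{Gauss}) amounts to the curvature computation above together with $\det S_{\xi_1}=\det S_{\xi_2}=0$, which is immediate from (\ref{secfundexis2}). For the other five identities one substitutes the connection coefficients found above and the expressions (\ref{secfundexis2}) for $\sigma$; after cancellation each becomes a polynomial identity in $\mu$, $G$ and their derivatives that holds precisely because of the two PDEs (\ref{exiscon2}) --- so (\ref{exiscon2}) is exactly the set of integrability conditions, as (\ref{exiscon1}) is in Proposition \ref{Exis1}. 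The bookkeeping is routine but delicate, since the coefficients carry the factors $\sqrt{G}$, $\sqrt{-A_1}$ and $\sqrt{\mu^2+A_2}$.

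Once the compatibility equations are in hand, Theorem 1 yields an isometric immersion $\psi:M^2\to M^2(c_1)\times M^2(c_2)$ with second fundamental form $\sigma$, whose normal bundle is identified with $T^\perp M^2$ by an isomorphism $\widetilde{\psi}$, and for which $\widetilde F(\psi_*X)=\psi_*(fX)+\widetilde{\psi}(hX)$ by (\ref{structure}); since $f$ has the constant eigenvalues $\cos(2\theta_1)$ and $\cos(2\theta_2)$, the angle functions are constant, so $\psi(M^2)$ is a constant angle surface. Finally, any two immersions produced this way carry the same $f$ and, through the isometric bundle map obtained by composing the two identifications of $T^\perp M^2$ with the respective normal bundles, the same $\sigma$, $\nabla^\perp$ and $h$; Theorem 2 then supplies an isometry $\tau$ of $M^2(c_1)\times M^2(c_2)$ with $\tau\circ\psi=\psi'$, which is the claimed uniqueness.
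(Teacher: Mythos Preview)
Your proposal is correct and follows essentially the same approach as the paper, which simply states that Proposition~\ref{Exis2} ``can be proven analogously as the previous one'' (Proposition~\ref{Exis1}). Your explicit identification of $G$ with the role of $(\mu_1^2+A_1)^{-1}$ and $\mu$ with $\mu_2$, followed by verification of the compatibility equations and invocation of Theorems~1 and~2, is precisely the scheme the paper has in mind.
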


\begin{Pro}\label{Exis3}
Let $c_1,c_2 \in \mathbb{R}$, not both $0$, $\theta_1,\theta_2 \in (0,\frac{\pi}{2})$ with $\theta_1 > \theta_2$. Define constants $a_1,a_2,A_1$ and $A_2$ as in Proposition $\ref{Exis1}$. Moreover we suppose that $A_1, A_2 < 0$. Let $E  = E(u,v)$ and $G = G(u,v)$ be positive real-valued functions which satisfy
\begin{equation}\label{exiscon3}
a_2\sqrt{-A_2G} = \frac{E_v}{2E},\qquad a_1\sqrt{-A_1E} = \frac{G_u}{2G}.
\end{equation}
Then the Riemannian manifold $M^2 = (U,g)$ with the Riemannian metric $g = Edu^2 +Gdv^2$
is a surface of constant curvature $c_1\cos^2(\theta_1)\cos^2(\theta_2) + c_2\sin^2(\theta_1)\sin^2(\theta_2)$. Define now on the vector bundle $TU$ a second metric $\widetilde{g}$ by $\sin^2(2\theta_1)Edu^2 + \sin^2(2\theta_2)Gdv^2$ and denote this Riemannian vector bundle by $T^{\perp}M^2$. Let $f: TM^2 \rightarrow TM^2$, $t:T^{\perp}M^2 \rightarrow T^{\perp}M^2$, and $h :TM^2 \rightarrow T^{\perp}M^2$ be $(1,1)$-tensors over $M^2$ as defined in Proposition $\ref{Exis1}$. Finally define a symmetric $(1,2)$ tensor $\sigma$ with values in $T^{\perp}M^2$ and a connection $\nabla^{\perp}$ on $T^{\perp}M^2$ compatible with the metric by
\begin{equation*}
\begin{split}
\sigma(\partial_u,\partial_u) = \frac{\sqrt{-A_2}E}{\sin(2\theta_2)\sqrt{G}} h\partial_v,\:\sigma(\partial_u,\partial_v) = 0,\\
\sigma(\partial_v,\partial_v) = \frac{\sqrt{-A_1}G}{\sin(2\theta_1)\sqrt{E}}h\partial_u,
\end{split}
\end{equation*}
\begin{equation*}
\begin{split}
\nabla^{\perp}_{\partial_u}h\partial_u = \frac{E_u}{2E}h\partial_u  + \frac{a_1\sin(2\theta_1)\sqrt{-A_2}E}{\sqrt{G}}h\partial_v,\\
\nabla^{\perp}_{\partial_u}h\partial_v = \nabla^{\perp}_{\partial_v}h\partial_u = h(\nabla_{\partial_u}\partial_v) = h(\nabla_{\partial_v}\partial_u),\\
\nabla^{\perp}_{\partial_v}h\partial_v = \frac{a_2\sin(2\theta_2)\sqrt{-A_1}G}{\sqrt{E}}h\partial_u + \frac{G_v}{2G}h\partial_v,
\end{split}
\end{equation*}
where $\nabla$ is the Levi-Civita connection of $M^2$. Then $(M^2,g,T^{\perp}M^2,\widetilde{g},\sigma,\nabla^{\perp},f,h,t)$ satisfies the compatibility equations of $M^2(c_1)\times M^2(c_2)$ and hence there exists an isometric immersion of $M^2$ in $M^2(c_1)\times M^{2}(c_2)$ such that this surface is a constant angle surface and is unique up to isometries of $M^2(c_1)\times M^2(c_2)$.
\end{Pro}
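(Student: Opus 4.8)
The plan is to follow the proof of Proposition \ref{Exis1} step for step, adapting it to a metric of the shape $g = E\,du^2 + G\,dv^2$, which is the natural form once both $A_1$ and $A_2$ are negative. First I would compute the Christoffel symbols of $g$ in the coordinates $(u,v)$ and use the relations (\ref{exiscon3}) to put $\nabla_{\partial_u}\partial_u$, $\nabla_{\partial_u}\partial_v$ and $\nabla_{\partial_v}\partial_v$ in closed form in terms of $E$, $G$, $E_u$, $G_v$, $a_1$, $a_2$, $A_1$ and $A_2$. Inserting (\ref{exiscon3}) into the classical Gaussian-curvature formula for an orthogonal metric and simplifying, I expect to obtain $K = c_1\cos^2(\theta_1)\cos^2(\theta_2) + c_2\sin^2(\theta_1)\sin^2(\theta_2)$, which establishes the claimed constant curvature of $(U,g)$.

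Next I would dispose of the algebraic compatibility relations. Defining $s:T^{\perp}M^2 \to TM^2$ by $g(s\xi,X) = \widetilde{g}(\xi,hX)$ and using that $h$ is the identity with respect to the dual frames $\{\partial_u,\partial_v\}$ and $\{\widetilde{\partial}_u,\widetilde{\partial}_v\}$ while $\widetilde{g}$ differs from $g$ by the factors $\sin^2(2\theta_1)$ and $\sin^2(2\theta_2)$, one sees that $s$ is diagonal with eigenvalues $\sin^2(2\theta_1)$ and $\sin^2(2\theta_2)$. Since $f$, $t$, $h$ and $s$ are then simultaneously diagonal in these frames, the equations (\ref{symf}), (\ref{symt}), (\ref{trans}) and (\ref{orth}) reduce to the scalar identities $\cos^2(2\theta_i) + \sin^2(2\theta_i) = 1$ together with the orthogonality of the two diagonal frames, and hence hold.

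The heart of the argument is the verification of the differential compatibility equations (\ref{Gauss}), (\ref{Codazzi}), (\ref{Ricci}), (\ref{parf}), (\ref{parh}) and (\ref{part}). Normalising, one checks that $e_1 = \partial_u/\sqrt{E}$, $e_2 = \partial_v/\sqrt{G}$, $\xi_1 = h\partial_u/(\sin(2\theta_1)\sqrt{E})$, $\xi_2 = h\partial_v/(\sin(2\theta_2)\sqrt{G})$ is an adapted orthonormal frame, in which the prescribed $\sigma$ becomes $\sigma(e_1,e_1) = \sqrt{-A_2}\,\xi_2$, $\sigma(e_1,e_2) = 0$, $\sigma(e_2,e_2) = \sqrt{-A_1}\,\xi_1$, so that $S_{\xi_1} = \mathrm{diag}(0,\sqrt{-A_1})$ and $S_{\xi_2} = \mathrm{diag}(\sqrt{-A_2},0)$ have vanishing determinant. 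Consequently $\det(S_{\xi_1}) + \det(S_{\xi_2}) + c_1\det(\tfrac{I+f}{2}) + c_2\det(\tfrac{I-f}{2})$ equals the intrinsic curvature found in the first step, which is exactly the content of (\ref{Gauss}) for a surface. For the remaining equations I would substitute the explicit $\sigma$, the prescribed $\nabla^{\perp}$, the connection $\nabla$ of the first step and the definitions of $a_1,a_2,A_1,A_2$ into (\ref{Codazzi}), (\ref{Ricci}), (\ref{parf}), (\ref{parh}) and (\ref{part}), and verify that (\ref{exiscon3}) forces every term to cancel. I expect this to be the main obstacle: it is a long bookkeeping computation, but structurally identical to the one carried out for Proposition \ref{Exis1}, with the $\sin(2\theta_i)$, $\cos(2\theta_i)$ and $A_i$ factors cancelling in the same pattern; the only genuine difference is that here both $A_1$ and $A_2$ are negative, which is precisely what lets the metric be written as $E\,du^2 + G\,dv^2$.

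Once the compatibility equations are confirmed, I would invoke Theorem 1 to produce an isometric immersion $\psi : M^2 \to M^2(c_1)\times M^2(c_2)$ with second fundamental form $\sigma$ and with normal bundle isomorphic to $T^{\perp}M^2$. By (\ref{structure}) the tangential part of the product structure along $\psi$ is $f$, so its eigenvalues $\cos(2\theta_1)$ and $\cos(2\theta_2)$ are the cosines of twice the angle functions of $\psi(M^2)$; being constant, they show that $\psi(M^2)$ is a constant angle surface. Finally, Theorem 2 yields that this immersion, with the prescribed second fundamental form and normal connection, is unique up to isometries of $M^2(c_1)\times M^2(c_2)$, exactly as in Proposition \ref{Exis1}.
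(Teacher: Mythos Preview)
Your proposal is correct and follows exactly the approach the paper intends: the paper does not give a separate proof of this proposition but simply states that it ``can be proven analogously as the previous one,'' i.e.\ as Proposition~\ref{Exis1}. Your outline is precisely that analogous proof, spelled out in more detail than the paper itself provides, with the only change being the natural adaptation to a metric of the form $E\,du^2 + G\,dv^2$ appropriate to the case $A_1,A_2<0$.
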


\subsection{Sine-Gordon and Sinh-Gordon equations}

We would like to make some remarks on the equations (\ref{exiscon1}). Suppose that $A_1,A_2 > 0$. If we define functions $\widetilde{\theta}_i$ such that $\mu_i = \sqrt{A_i}\cot(\widetilde{\theta_i})$, then the equations (\ref{exiscon1}) are equivalent to
\begin{equation}\label{Backtrans}
\begin{split}
-a_1\sqrt{\frac{A_1}{A_2}}\sin(\widetilde{\theta}_2) = (\widetilde{\theta}_1)_u,\\
-a_2\sqrt{\frac{A_2}{A_1}}\sin(\widetilde{\theta}_1) = (\widetilde{\theta}_2)_v.
\end{split}
\end{equation}
Differentiating the first equation with respect to $v$ and second equation with respect to $u$ gives us
\begin{gather}(\widetilde{\theta_1})_{uv} = -a_1\sqrt{\frac{A_1}{A_2}}\cos(\widetilde{\theta}_2)(\widetilde{\theta}_2)_v = a_1a_2\cos(\widetilde{\theta}_2)\sin(\widetilde{\theta}_1),\label{tussen}\\
(\widetilde{\theta_2})_{vu} = -a_2\sqrt{\frac{A_2}{A_1}}\cos(\widetilde{\theta}_1)(\widetilde{\theta}_1)_u = a_1a_2\cos(\widetilde{\theta}_1)\sin(\widetilde{\theta}_2)\label{tussensec}.
\end{gather}
The operations $(\ref{tussen}) + (\ref{tussensec})$ and $(\ref{tussen}) - (\ref{tussensec})$ yield
\begin{gather*}
(\widetilde{\theta_1} + \widetilde{\theta}_2)_{uv} = a_1a_2\sin(\widetilde{\theta_1} + \widetilde{\theta}_2),\\
(\widetilde{\theta_1} - \widetilde{\theta}_2)_{uv} = a_1a_2\sin(\widetilde{\theta_1} - \widetilde{\theta}_2).
\end{gather*}
Hence we have found a correspondence between some constant angle surfaces in $M^2(c_1)\times M^2(c_2)$ and the Sine-Gordon equation. We would like to remark that the equations (\ref{Backtrans}) are the B\"acklund transformations for this Sine-Gordon equation. So we obtain a big range of surfaces with constant angle in $M^2(c_1)\times M^2(c_2)$.

With similar reasoning we find a correspondence with the Sinh-Gordon equation and some constant angle surfaces in $M^2(c_1)\times M^2(c_2)$ if $A_1,A_2 < 0$. We define functions $\widetilde{\theta}_i:U \rightarrow (0,\infty)$, such that $\mu_i = \sqrt{-A_i}\coth(\widetilde{\theta}_i)$, then the equations (\ref{exiscon1}) are equivalent to
\begin{equation*}
\begin{split}
-a_1\sqrt{\frac{A_1}{A_2}}\sinh(\widetilde{\theta}_2) = (\widetilde{\theta}_1)_u,\\
-a_2\sqrt{\frac{A_2}{A_1}}\sinh(\widetilde{\theta}_1) = (\widetilde{\theta}_2)_v.
\end{split}
\end{equation*}
Differentiating the first equation with respect to $v$ and second equation with respect to $u$ gives us
\begin{gather}(\widetilde{\theta_1})_{uv} = -a_1\sqrt{\frac{A_1}{A_2}}\cosh(\widetilde{\theta}_2)(\widetilde{\theta}_2)_v = a_1a_2\cosh(\widetilde{\theta}_2)\sinh(\widetilde{\theta}_1),\label{tussen1}\\
(\widetilde{\theta_2})_{vu} = -a_2\sqrt{\frac{A_2}{A_1}}\cosh(\widetilde{\theta}_1)(\widetilde{\theta}_1)_u = a_1a_2\cosh(\widetilde{\theta}_1)\sinh(\widetilde{\theta}_2)\label{tussensec1}.
\end{gather}
The operations $(\ref{tussen1}) + (\ref{tussensec1})$ and $(\ref{tussen1}) - (\ref{tussensec1})$ yield
\begin{gather}
(\widetilde{\theta_1} + \widetilde{\theta}_2)_{uv} = a_1a_2\sinh(\widetilde{\theta_1} + \widetilde{\theta}_2),\\
(\widetilde{\theta_1} - \widetilde{\theta}_2)_{uv} = a_1a_2\sinh(\widetilde{\theta_1} - \widetilde{\theta}_2).
\end{gather}
Finally we suppose that $A_1 < 0$ and $A_2 > 0$. We define functions $\widetilde{\theta}_1:U \rightarrow (0,\infty)$, such that $\mu_1 = \sqrt{-A_1}\coth(\widetilde{\theta}_i)$, and  $\widetilde{\theta}_2:U \rightarrow (0,\pi)$, such that $\mu_2 = \sqrt{A_2}\cot(\widetilde{\theta}_2)$, then the equations (\ref{exiscon1}) are equivalent to
\begin{equation*}
\begin{split}
-a_1\sqrt{-\frac{A_1}{A_2}}\sin(\widetilde{\theta}_2) = (\widetilde{\theta}_1)_u,\\
-a_2\sqrt{-\frac{A_2}{A_1}}\sinh(\widetilde{\theta}_1) = (\widetilde{\theta}_2)_v.
\end{split}
\end{equation*}
Differentiating the first equation with respect to $v$ and the second equation with respect to $u$ gives us
\begin{gather}(\widetilde{\theta_1})_{uv} = -a_1\sqrt{-\frac{A_1}{A_2}}\cos(\widetilde{\theta}_2)(\widetilde{\theta}_2)_v = a_1a_2\cos(\widetilde{\theta}_2)\sinh(\widetilde{\theta}_1),\label{tussen2}\\
(\widetilde{\theta_2})_{vu} = -a_2\sqrt{-\frac{A_2}{A_1}}\cosh(\widetilde{\theta}_1)(\widetilde{\theta}_1)_u = a_1a_2\cosh(\widetilde{\theta}_1)\sin(\widetilde{\theta}_2)\label{tussensec2}.
\end{gather}
The operations $(\ref{tussen2}) + i(\ref{tussensec2})$ and $(\ref{tussen2}) - i(\ref{tussensec2})$ yield
\begin{gather*}
(\widetilde{\theta_1} + i\widetilde{\theta}_2)_{uv} = a_1a_2\sinh(\widetilde{\theta_1} + i\widetilde{\theta}_2),\\
(\widetilde{\theta_1} - i\widetilde{\theta}_2)_{uv} = a_1a_2\sinh(\widetilde{\theta_1} - i\widetilde{\theta}_2).
\end{gather*}
\section{Main Theorems}
In this final section we will classify all the constant angle surfaces in $M^2(c_1)\times M^2(c_2)$. We split the classification in several subcases. Suppose first that $\lambda_1 = -1$ and $\lambda_2 = \cos(2\theta)$ is a constant in $(-1,1)$. We will prove the following theorem.
\begin{Th}\label{Th:const1}
A surface $M^2$ isometrically immersed in $M^2(c_1)\times M^2(c_2)$ is a constant angle surface with angles $\theta$ and $\frac{\pi}{2}$ if and only if the immersion $\psi$ is locally given by
\begin{equation*}
\psi(u,v) = (\widetilde{f}(v),\cos(\sqrt{c_2}\sin(\theta)v)\bar{f}(u) + \sin(\sqrt{c_2}\sin(\theta)v)\bar{f}(u)\times\bar{f}'(u)) \hbox{ if } c_2>0,
\end{equation*}
where $\widetilde{f}$ is a curve in $M^2(c_1)$ of constant speed $\cos(\theta)$ and $\bar{f}$ is a unit speed curve in $M^2(c_2)$; by
\begin{equation*}
\psi(u,v) = (\widetilde{f}(v),\cosh(\sqrt{-c_2}\sin(\theta)v)\bar{f}(u) + \sinh(\sqrt{-c_2}\sin(\theta)v)\bar{f}(u)\boxtimes\bar{f}'(u)) \hbox{ if }c_2<0,
\end{equation*}
where $\widetilde{f}$ is a curve in $M^2(c_1)$ of constant speed $\cos(\theta)$ and $\bar{f}$ is a unit speed curve in $M^2(c_2)$,
\begin{equation}\label{geod3}
\psi(u,v) = (\widetilde{f}(v),u,\sin(\theta)v)\: or \:\psi(u,v) = (\widetilde{f}(v), v\sin(\theta)\bar{f}(u) + \bar{g}(u)) \hbox{ if } c_2 = 0; or by
\end{equation}
where $\widetilde{f}$ is a curve in $M^2(c_1)$ of constant speed $\cos(\theta)$, $\bar{f}(u) = (\cos(u),\sin(u))$ and $\bar{g}'(u) = \cos(\theta)C(u)(-\sin(u),\cos(u))$, where $C$ is a function on an interval $I$.
\end{Th}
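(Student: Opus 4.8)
The plan is to establish the two implications separately, the classification being the substantial direction; I work throughout with the data of Proposition~\ref{cond1}. So we have an adapted orthonormal frame $\{e_1,e_2,\xi_1,\xi_2\}$ along $M^2$ with $fe_1=-e_1$, $fe_2=\cos(2\theta)e_2$, $t\xi_1=\xi_1$, $t\xi_2=-\cos(2\theta)\xi_2$, and, combining (\ref{symf}), (\ref{symt}), (\ref{orth}), (\ref{trans}) with the shape operators (\ref{shape}): $he_1=0$, $he_2=\sin(2\theta)\xi_2$, $s\xi_1=0$, $\sigma(e_1,e_1)=\mu_2\xi_2$, $\sigma(e_2,e_2)=\mu_1\xi_1$, $\sigma(e_1,e_2)=0$. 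The first remark I would record is that, since $Fe_1=fe_1+he_1=-e_1$ and $F\xi_1=s\xi_1+t\xi_1=\xi_1$, the vector $e_1$ is tangent to the second factor $\MC$ and $\xi_1$ is a normal vector tangent to the first factor $\MCc$; this is what makes it possible to argue factor by factor.

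Next I would set up adapted coordinates. From (\ref{Levi1}) one reads off $\nabla_{e_2}e_1=\nabla_{e_2}e_2=0$, $\nabla_{e_1}e_1=-\tan(\theta)\mu_2 e_2$ and $[e_1,e_2]=\tan(\theta)\mu_2 e_1$, so the integral curves of $e_2$ are geodesics of $M^2$ along which $\{e_1,e_2\}$ is parallel; and by the Gauss formula $\widetilde\nabla_{e_2}e_2=\sigma(e_2,e_2)=\mu_1\xi_1$, which is tangent to the first factor. Choosing $(u,v)$ with $\partial_v=e_2$ (arc length along these geodesics) and the curve $\{v=0\}$ a unit-speed integral curve of $e_1$, the commutation of coordinate fields together with the bracket relation forces $\partial_u=\varphi\,e_1$ for a positive function $\varphi$ with $\varphi(\cdot,0)=1$, and $g=\varphi^2\,du^2+dv^2$. (The explicit form of $\varphi$ can also be got from $K=c_2\sin^2(\theta)$, i.e.\ $\varphi_{vv}=-c_2\sin^2(\theta)\varphi$, but it will reappear below.)

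Now comes the reconstruction of $\psi$. Write $\psi=(\psi^{(1)},\psi^{(2)})$ for the compositions of $\psi$ with the two projections of $\MCc\times\MC$, so that the differential of $\psi^{(1)}$ is $X\mapsto\tfrac{I+F}{2}X$ and that of $\psi^{(2)}$ is $X\mapsto\tfrac{I-F}{2}X$. Since $e_1$ is tangent to the second factor, $\psi^{(1)}$ is independent of $u$, so $\psi^{(1)}=\widetilde f(v)$; and $|\partial_v\psi^{(1)}|=|\tfrac{I+F}{2}e_2|=\cos(\theta)$, so $\widetilde f$ is a curve of constant speed $\cos(\theta)$ in $\MCc$. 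Because $\widetilde\nabla_{e_2}e_2=\mu_1\xi_1$ is tangent to the first factor, each curve $v\mapsto\psi^{(2)}(u_0,v)$ is a geodesic of $\MC$, of speed $|\tfrac{I-F}{2}e_2|=\sin(\theta)$; hence, using the explicit parametrization of geodesics in the standard model of $\MC$ inside $\mathbb{E}^3$ (resp.\ $\mathbb{R}^3_1$ when $c_2<0$, resp.\ $\mathbb{E}^2$ when $c_2=0$), and $\varphi(\cdot,0)=1$, which makes $\bar f(u):=\psi^{(2)}(u,0)$ a unit-speed curve, one gets $\psi^{(2)}(u,v)=\cos(\sqrt{c_2}\sin(\theta)v)\,\bar f(u)+\sin(\sqrt{c_2}\sin(\theta)v)\,W(u)$ where $W(u)$ is a fixed positive multiple of the initial velocity $\psi^{(2)}_v(u,0)$. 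Finally, since $\psi^{(2)}_u(u,0)=e_1$ and $\psi^{(2)}_v(u,0)=\sin^2(\theta)e_2-\sin(\theta)\cos(\theta)\xi_2$, the two are orthogonal (because $e_1\perp e_2$ and $e_1\perp\xi_2$), so $W(u)$ is orthogonal to both the position vector $\bar f(u)$ and the velocity $\bar f'(u)$; matching lengths this forces $W(u)=\pm\bar f(u)\times\bar f'(u)$ (resp.\ $\pm\bar f(u)\boxtimes\bar f'(u)$), and after possibly reversing an orientation this is exactly the formula of the theorem when $c_2\neq0$. When $c_2=0$ the ruling geodesics are straight lines $\psi^{(2)}(u,v)=\bar g(u)+v\sin(\theta)\,n(u)$ with $|n|=1$, $n\perp\bar g'$; if $n$ is constant a rotation of $\mathbb{E}^2$ and a reparametrization give $\psi(u,v)=(\widetilde f(v),u,\sin(\theta)v)$, and if $n$ is non-constant one reparametrizes $u$ so that $n(u)=(\cos u,\sin u)$, which forces $\bar g'(u)=\cos(\theta)C(u)(-\sin u,\cos u)$, yielding the remaining case of (\ref{geod3}). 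The converse direction is a direct verification: for each $\psi$ in the list one checks $g(\psi_u,\psi_v)=0$ and the claimed lengths, computes $F\psi_u$ and $F\psi_v$ from $F(v_1,v_2)=(v_1,-v_2)$, and reads off that $f$ has constant eigenvalues $-1$ and $\cos(2\theta)$, so that $M^2$ is a constant angle surface with angles $\theta$ and $\tfrac{\pi}{2}$.

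I expect the most delicate point to be the passage from the structural picture to the precise normal forms: identifying $W$ as exactly $\pm\bar f\times\bar f'$ and then checking that $\varphi^2=|\psi^{(2)}_u|^2$ indeed comes out as the trigonometric expression in the geodesic curvature of $\bar f$; and, above all, the flat case $c_2=0$, where the rulings are lines rather than circles and the reparametrization producing $\bar f(u)=(\cos u,\sin u)$ must be set up so as to give both forms in (\ref{geod3}). Reading off the frame data from Proposition~\ref{cond1}, constructing the coordinate patch and the converse computation are all routine.
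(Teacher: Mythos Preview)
Your argument is correct and takes a genuinely different route from the paper's own proof. The paper proceeds computationally: after setting up the same adapted frame and coordinates $(u,v)$ with $\partial_v=e_2$, $\partial_u=\alpha e_1$, it (i) integrates the Codazzi equation $(\mu_2)_v=-\tan(\theta)(\mu_2^2+c_2\cos^2(\theta))$ explicitly case by case in the sign of $c_2$, (ii) then integrates $\alpha_v/\alpha=\tan(\theta)\mu_2$ to get the warping function, (iii) embeds $\Mcc$ into the ambient $\mathbb{E}^6$ (or $\mathbb{E}^6_1$, $\mathbb{E}^5$) and integrates the Gauss formula component by component, and (iv) extracts the relations among the component curves from the full battery of metric and normal conditions. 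Your approach replaces steps (i)--(iii) by the single geometric observation that $\widetilde\nabla_{e_2}e_2=\mu_1\xi_1$ is tangent to the first factor, hence each $v$-curve projects to a geodesic of $\MC$; the explicit geodesic parametrization in the model of $\MC$ then yields the trigonometric (resp.\ hyperbolic, resp.\ linear) form at once, and the identification $W=\pm\bar f\times\bar f'$ drops out of the orthogonality $\psi^{(2)}_u\perp\psi^{(2)}_v$ at $v=0$.

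What each approach buys: yours is shorter, coordinate-free in spirit, and makes transparent \emph{why} the second component is a ruled surface by geodesics of $\MC$ orthogonal to $\bar f$; the paper's method, while heavier, mechanically produces along the way the explicit formulas for $\mu_2$ and $\alpha$ and the relation $\bar\kappa/\sqrt{c_2}=\tan(C(u))$ between the geodesic curvature of $\bar f$ and the integration constant. In the flat case $c_2=0$ both arguments reduce to the same dichotomy (constant versus non-constant ruling direction), and both leave the converse as a direct verification. One small point to tighten in your write-up: when you reparametrize $u$ in the non-constant case so that $n(u)=(\cos u,\sin u)$, you are abandoning the earlier normalization $\varphi(\cdot,0)=1$, so it is worth saying explicitly that $\rho(u)$ is now unconstrained and that writing it as $\cos(\theta)C(u)$ is purely a convention matching the statement.
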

\begin{proof}
After a straight-forward computation, one can verify that the surfaces listed in the theorem are constant angle surfaces in $M^2(c_1)\times M^2(c_2)$ with angles $\theta$ and $\frac{\pi}{2}$.

Conversely, let $\psi: M^2 \rightarrow M^2(c_1)\times M^2(c_2)$ be a constant angle surface, with $\lambda_1 = -1$ and $\lambda_2 = \cos(2\theta)$. Then Proposition (\ref{cond1}) tells us that we can find an adapted orthonormal frame $\{e_1,e_2,\xi_1,\xi_2\}$ such that $fe_1 = -e_1,\:fe_2 = \lambda_2e_2,\:t\xi_1 = \xi_1$ and $t\xi_2=-\lambda_2\xi_2$ and such that the shape operators associated to $\xi_1$ and $\xi_2$ with respect to $e_1$ and $e_2$ are given by
\begin{equation*}
S_{\xi_1} = \left(
  \begin{array}{cc}
    0 & 0 \\
    0 & \mu_1 \\
  \end{array}
\right),
\qquad
S_{\xi_2} = \left(
  \begin{array}{cc}
    \mu_2 & 0 \\
    0 & 0 \\
  \end{array}
\right),
\end{equation*}
for some functions $\mu_1$ and $\mu_2$ on $M^2$. Using (\ref{Levi1}) we obtain that the Levi-Civita connection satisfies
\begin{gather}
\nabla_{e_1}e_1 = \tan(\theta)\mu_2 e_2,\\
\nabla_{e_1}e_2 = -\tan(\theta)\mu_2 e_1,\label{conc1}\\
\nabla_{e_2}e_1 = 0,\label{conc2}\\
\nabla_{e_2}e_2 = 0.
\end{gather}
From equations (\ref{conc1}) and (\ref{conc2}) and the fact that $[\partial_u,\partial_v] = 0$, we can deduce that there exist locally coordinates $(u,v)$ on $M^2$ such that $\partial_{u} = \alpha e_1$ and $\partial_{v} = e_2$ with
\begin{equation}\label{int1}
\alpha_{v} = \alpha\mu_2\tan(\theta).
\end{equation}
Hence the metric takes the form
\begin{equation*}
ds^2 = \alpha^2 du^2 + dv^2
\end{equation*}
and the Levi-Civita connection is given by
\begin{gather*}
\nabla_{\partial_u} \partial_u= \frac{\alpha_u}{\alpha}\partial_u -\alpha\alpha_v\partial_v,\\
\nabla_{\partial_u}\partial_v = \nabla_{\partial_v}\partial_u = \tan(\theta)\mu_2\partial_u ,\\
\nabla_{\partial_v}\partial_v = 0.
\end{gather*}
We can also calculate the normal connection $\nabla^{\perp}$ of $M^2$ using (\ref{norLevi1}):
\begin{gather*}
\nabla^{\perp}_{\partial_{u}}\xi_1 = \nabla^{\perp}_{\partial_{u}}\xi_2 = 0,\\
\nabla^{\perp}_{\partial_{v}}\xi_1 =  -\tan(\theta)\mu_1\xi_2,\\
\nabla^{\perp}_{\partial_{v}}\xi_2 = \tan(\theta)\mu_1\xi_1.
\end{gather*}
The Codazzi equation gives us now that
\begin{gather}
(\mu_1)_u = 0,\label{Co1}\\
(\mu_2)_v = -\mu_2^2\tan(\theta) - \cos(\theta)\sin(\theta)c_2\label{Co2}.
\end{gather}
We immediately see that $\mu_1$ is a function that depends only on $v$. We solve now equations (\ref{int1}) and (\ref{Co2}). From equation (\ref{Co2}) we see that $\mu_2$ must satisfy the following PDE:
\begin{equation*}
(\mu_2)_v = -\tan(\theta)(c_2 \cos^2(\theta) + \mu_2^2).
\end{equation*}
By integration we obtain that $\mu_2$ must be equal to
\begin{equation*}
\begin{cases}
-\sqrt{c_2}\cos(\theta)\tan(\sqrt{c_2}\sin(\theta)v + C(u))&\textrm{ if } c_2 > 0,\\
 0 \: \textrm{ or }\: \frac{1}{\tan(\theta)v + C(u)} & \textrm{ if }\: c_2 = 0,\\
\pm\sqrt{-c_2}\cos(\theta)  \: \textrm{ or }\: \sqrt{-c_2}\cos(\theta)\tanh(\sqrt{-c_2}\sin(\theta)v + C(u))& \textrm{ if }\: c_2 < 0,
\end{cases}
\end{equation*}
where $C$ is some function depending on $u$.
Now, solving (\ref{int1}) we see that $\alpha$ equals
\begin{equation*}
\begin{cases}
D(u)\cos(\sqrt{c_2}\sin(\theta)v + C(u))& \textrm{ if }\: c_2 > 0,\\
D(u) \:\textrm{ or }\: D(u)(\tan(\theta)v + C(u))& \textrm{ if }\: c_2 = 0,\\
D(u)\exp{(\pm\sqrt{-c_2}\sin(\theta)v)}\: \textrm{ or }\: D(u)\cosh(\sqrt{-c_2}\sin(\theta)v + C(u))& \textrm{ if }\: c_2 < 0,
\end{cases}
\end{equation*}
where $D$ is some strictly positive function depending on $u$.

We will only consider the case for which $c_2 > 0$. The other cases can be treated analogously and the results of the other cases are stated in Theorem \ref{Th:const1}. So we can consider $M^2(c_1)\times M^2(c_2)$ as a submanifold of $\mathbb{E}^5,\mathbb{E}^6_1$ or $\mathbb{E}^6$ of codimension $1$ or $2$ and denote by $D$ the connection of $\mathbb{E}^5,\mathbb{E}^6_1$ or $\mathbb{E}^6$. Hence $M^2$ is an immersed surface in $\mathbb{E}^5,\mathbb{E}^6_1$ or $\mathbb{E}^6$. Remark now that $\xi_1$, $\xi_2$, which are tangent to $M^2(c_1)\times M^2(c_2)$, and $\bar{\xi} = (0,0,0,\psi_4,\psi_5,\psi_6)$ are normals of $M^2$ in $\mathbb{E}^5$ if $c_1 = 0$  and that $\xi_1,\xi_2,\widetilde{\xi} = (\psi_1,\psi_2,\psi_3,0,0,0)$ and $\bar{\xi}$ are normals of $M^2$ in $\mathbb{E}^6_1$ or $\mathbb{E}^6$ if $c_1 \neq 0$. Moreover we have that $F\xi_1 = \xi_1$ and hence $\xi_1$ is parallel to the first component of $M^2(c_1)\times M^2(c_2)$. One can verify that we have for every $X \in T_pM^2$,
\begin{equation}\label{Euc1}
D_X\widetilde{\xi} = \left(\frac{I + F}{2}\right)X = \left(\frac{I + f}{2}\right)X + \frac{hX}{2}
\end{equation}
and
\begin{equation}\label{Euc2}
D_X\bar{\xi} = \left(\frac{I - F}{2}\right)X = \left(\frac{I - f}{2}\right)X - \frac{hX}{2},
\end{equation}
where $F$ is the product structure of $M^2(c_1)\times M^2(c_2)$. Moreover the formulas of Gauss and Weingarten give that:
\begin{gather}
D_X Y = \nabla_X Y + \sigma(X,Y) - \frac{c_1}{2}g(\left(\frac{I + f}{2}\right)X,Y)\widetilde{\xi} - \frac{c_2}{2}g(\left(\frac{I - f}{2}\right)X,Y)\bar{\xi},\label{GaussD}\\
D_X\xi_1 = -S_{\xi_1}X + \nabla^{\perp}_X\xi_1,\label{WeinD1}\\
D_X\xi_2 = -S_{\xi_2}X + \nabla^{\perp}_X\xi_2 -\frac{c_1}{2}\widetilde{g}(hX,\xi_2)\widetilde{\xi} + \frac{c_2}{2}\widetilde{g}(hX,\xi_2)\bar{\xi}\label{WeinD2}.
\end{gather}
In the following we will consider the case for which $c_1 > 0$. The case for which $c_1 \leq 0 $ can be treated analogously.
Since $\partial_u = \alpha e_1$ we find using equation (\ref{Euc1}) that
\begin{equation*}
(\psi_1,\psi_2,\psi_3,0,0,0)_u = D_{\partial_u}\widetilde{\xi} = 0,
\end{equation*}
and hence we obtain that $\psi_i(u,v) = \widetilde{f}_i(v)$ for $i = 1,\dots,3$. Analogously we find that
\begin{gather*}
(\xi_2)_i = \tan(\theta)(\psi_i)_v \hbox{ for }i = 1,\dots ,3;\\
(\xi_2)_{j} = -\cot(\theta)(\psi_j)_v\hbox{ for }j = 4,\dots ,6.
\end{gather*}
We now use the formula of Gauss and the previous equations to find that
\begin{gather}
(\psi_{j})_{uu} = \frac{\alpha_u}{\alpha}(\psi_{j})_u - \alpha\alpha_v(\psi_{j})_v - \cot(\theta)\mu_2\alpha^2(\psi_{j})_v - c_2\alpha^2\psi_{j},\\
(\psi_{j})_{uv} = \frac{\alpha_v}{\alpha}(\psi_j)_{u} = \tan(\theta)\mu_2(\psi_j)_{u},\label{Gausseq1}\\
(\psi_j)_{vv} = -c_2\sin^2(\theta)\psi_j\label{Gausseq2}
\end{gather}
for $j = 4,5,6$.
Integrating equation (\ref{Gausseq1}), we find that
\begin{equation*}
(\psi_j)_{u} = \cos(\sqrt{c_2}v + C(u))H_j(u)
\end{equation*}
and hence we obtain that
\begin{equation*}
\psi_j = \int_{u_0}^{u}\cos(\sqrt{c_2}v + C(\tau))H_j(\tau)d\tau + I_{j}(v)
\end{equation*}
for $j = 4,5,6$ and with $H_j$ and $I_j$ arbitrary functions. Moreover, using equation (\ref{Gausseq2}) we find that the functions $I_j$ must satisfy
\begin{equation*}
I_j(v) = K_j\cos(\sqrt{c_2}\sin{\theta}v) + L_j\sin(\sqrt{c_2}\sin(\theta)v),
\end{equation*}
where $K_j$ and $L_j$ are constant. We summarize the previous and see that our immersion $\psi$ is given by
\begin{multline*}
\psi = (\widetilde{f}_1(v),\widetilde{f}_2(v),\widetilde{f}_3(v),\\
\left(K_4 +  \int_{u_0}^{u}H_4(\tau)\cos(C(\tau))d\tau
\right)\cos(\sqrt{c_2}\sin(\theta)v)\\
 + \left(L_4 -  \int_{u_0}^{u}H_4(\tau)\sin(C(\tau))d\tau
\right)\sin(\sqrt{c_2}\sin(\theta)v),\dots).
\end{multline*}
We define now the functions
\begin{gather*}
\bar{f}_j(u) = K_j +  \int_{u_0}^{u}H_j(\tau)\cos(C(\tau))d\tau,\\
\bar{g}_j(u) = L_j -  \int_{u_0}^{u}H_j(\tau)\sin(C(\tau))d\tau.
\end{gather*}
We use now some conditions to find a relation between $\bar{f}(u) = (\bar{f}_1(u),\bar{f}_2(u),\bar{f}_3(u))$ and $\bar{g}(u) = (\bar{g}_1(u),\bar{g}_2(u),\bar{g}_3(u))$:
\begin{gather*}
g(\psi_u,\psi_u) = \alpha^2,\:  g(\psi_v,\psi_v) = 1,\: g(\psi_u,\psi_v) = 0,\\
g(\xi_1,\psi_u) = 0,\:  g(\xi_1,\psi_v) = 0,\:  g(\xi_1,\xi_1) = 1,\\
g(\xi_2,\psi_u) = 0,\: g(\xi_2,\psi_v) = 0,\: g(\xi_2,\xi_2) = 1,\\
g(\widetilde{\xi},\psi_u) = 0,\: g(\widetilde{\xi},\psi_v) = 0,\: g(\widetilde{\xi},\widetilde{\xi}) = \frac{1}{c_1},\\
g(\bar{\xi},\psi_u) = 0,\: g(\bar{\xi},\psi_v) = 0,\: g(\bar{\xi},\bar{\xi}) = \frac{1}{c_2},\\
g(\xi_1,\xi_2) = 0,\:g(\xi_1,\widetilde{\xi}) = 0,\: g(\xi_2,\widetilde{\xi}) = 0,\: g(\xi_1,\bar{\xi}) = 0,\: g(\xi_2,\bar{\xi}) = 0,
\end{gather*}
which are equivalent to
\begin{gather*}
\sum_{i = 1}^3\widetilde{f}_i^2 = \frac{1}{c_1},\\
\sum_{j = 1}^3\bar{f}_j^2 = \frac{1}{c_2}, \sum_{j = 1}^3\bar{g}_j^2 = \frac{1}{c_2},\\
\sum_{j = 1}^3\bar{f}_j\bar{g}_j = 0,\sum_{j = 1}^3\bar{f}'_j\bar{g}_j = 0,\\
\sum_{i = 1}^3(\widetilde{f}'_i)^2= \cos^2(\theta),
\end{gather*}
\begin{multline}\label{geodcurv1}
\sum_{j = 1}^3(\bar{f}_j')^2\cos^2(\sqrt{c_2}\sin(\theta)v) + (\bar{g}_j')^2\sin^2(\sqrt{c_2}\sin(\theta)v)  + \\
2\bar{f}_j'\bar{g}_j'\cos(\sqrt{c_2}\sin(\theta)v)\sin(\sqrt{c_2}\sin(\theta)v) =  D^2(u)\cos^2(\sqrt{c_2}\sin(\theta)v + C(u)).
\end{multline}
From the above equations we see that $\bar{f}(u) = (\bar{f}_1(u),\bar{f}_2(u),\bar{f}_3(u))$ and $\bar{g}(u) = (\bar{g}_1(u),\bar{g}_2(u),\bar{g}_3(u))$ are curves in $M^2(c_2)$. Moreover if we change the $u$-coordinate such that $\bar{f}$ is a unit speed curve, which corresponds to the fact that $D^2(u) = \sec^2(C(u))$, we see then from the previous equations that $\bar{g}$ is a curve in $M^2(c_2)$ that is perpendicular to the vectors $\bar{f}$ and $\bar{f}'$. Hence we obtain that $\bar{g} = \pm \bar{f}\times\bar{f}' $ and we can choose that $\bar{g} =  \bar{f}\times\bar{f}' $. The immersion $\psi$ is then given by
\begin{equation*}
\psi(u,v) = (\widetilde{f}(v),\cos(\sqrt{c_2}\sin(\theta)v)\bar{f}(u) + \sin(\sqrt{c_2}\sin(\theta)v)\bar{f}(u)\times \bar{f}'(u)).
\end{equation*}
Let us remark that since $\overline{g} = \overline{f}\times \overline{f}'$, we obtain that $\overline{g}' = \frac{1}{\sqrt{c_2}}(J\overline{f}')' = -\frac{\overline{\kappa}}{\sqrt{c_2}}\overline{f}'$ and hence we have $\overline{f}'\cdot \overline{g}' = -\frac{\overline{\kappa}}{\sqrt{c_2}}$. Using equation (\ref{geodcurv1}), we obtain that $\frac{\overline{\kappa}}{\sqrt{c_2}} = \tan(C(u))$.
\end{proof}

The case for which $\lambda_1 = \cos(2\theta)$ and $\lambda_2 = 1$ can be treated analogously as the previous case. We summarize this case in the next theorem:

\begin{Th}\label{Th:constant2}
A surface $M^2$ isometrically immersed in $M^2(c_1)\times M^2(c_2)$ is a constant angle surface with angles $0$ and $\theta$ if and only if the immersion $\psi$ is locally given by
\begin{equation*}
\psi(u,v) = (\cos(\sqrt{c_1}\cos(\theta)u)\widetilde{f}(v) + \sin(\sqrt{c_1}\cos\theta)u)\widetilde{f}(v)\times\widetilde{f}'(v),\bar{f}(u)) \hbox{ if }c_1 > 0,
\end{equation*}
where $\bar{f}$ is a curve in $M^2(c_2)$ of constant speed $\sin(\theta)$ and $\widetilde{f}$ is a unit speed curve in $M^2(c_1)$; by
\begin{equation*}
\psi(u,v) = (\cosh(\sqrt{-c_1}\cos(\theta)u)\widetilde{f}(v) + \sinh(\sqrt{-c_1}\cos(\theta)u)\widetilde{f}(v)\boxtimes\ \widetilde{f}'(v),\bar{f}(u))\hbox{ if }c_1<0
\end{equation*}
where $\bar{f}$ is a curve in $M^2(c_2)$ of constant speed $\sin(\theta)$ and $\widetilde{f}$ is a unit speed curve in $M^2(c_1)$; or by
\begin{equation}\label{geod4}
\psi(u,v) = (\cos(\theta)u,v,\bar{f}(u)) \: or \: \psi(u,v) = (u\cos(\theta)\widetilde{f}(v) + \widetilde{g}(v),\bar{f}(u)) \hbox{ if } c_1 = 0,
\end{equation}
where $\bar{f}$ is a curve in $M^2(c_2)$ of constant speed $\sin(\theta)$, $\widetilde{f}(v) = (\cos(v),\sin(v))$ and $\widetilde{g}'(v) = -\sin(\theta)C(v)$ $(-\sin(v),\cos(v))$, where $C$ is a function on an interval $I$.
\end{Th}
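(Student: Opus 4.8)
The plan is to run, \emph{mutatis mutandis}, the argument that proves Theorem \ref{Th:const1}, interchanging the two factors $M^2(c_1)$ and $M^2(c_2)$ (hence also $\tan\theta\leftrightarrow\cot\theta$, $\sin\theta\leftrightarrow\cos\theta$, $u\leftrightarrow v$) and replacing Proposition \ref{cond1} by Proposition \ref{cond2}. For the ``if'' direction one takes each immersion $\psi$ listed in the statement, computes $\psi_u,\psi_v$, checks that they are orthogonal with the prescribed lengths (after the stated reparametrisation), identifies the normal space, and applies the product structure $F$; reading off $f$ from $F\psi_{*}X=\psi_{*}(fX)+\widetilde\psi(hX)$ shows that the eigenvalues of $f$ are $1$ and $\cos(2\theta)$, i.e. the angle functions are $0$ and $\theta$, so these are constant angle surfaces of the required type. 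This is a routine computation.

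For the converse, start from a constant angle surface with $\lambda_1=\cos(2\theta)$, $\lambda_2=1$. Proposition \ref{cond2} supplies an adapted frame $\{e_1,e_2,\xi_1,\xi_2\}$ with $fe_1=\cos(2\theta)e_1$, $fe_2=e_2$, $t\xi_1=-\cos(2\theta)\xi_1$, $t\xi_2=-\xi_2$ and shape operators $S_{\xi_1}=\mathrm{diag}(0,\mu_1)$, $S_{\xi_2}=\mathrm{diag}(\mu_2,0)$. From (\ref{symf})--(\ref{trans}) one gets $he_2=0$ (so $e_2$ is tangent to the first factor) and $s\xi_2=0$ (so $\xi_2$ is tangent to the second factor). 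Equation (\ref{Levi2}) gives $\nabla_{e_1}e_1=\nabla_{e_1}e_2=0$, while $\nabla_{e_2}e_1,\nabla_{e_2}e_2$ are proportional to $e_2,e_1$; hence $[e_1,e_2]$ is a multiple of $e_2$ and there exist local coordinates $(u,v)$ with $\partial_u=e_1$, $\partial_v=\beta e_2$, where $\beta>0$ solves a first order linear ODE in $u$, and the metric is $du^2+\beta^2\,dv^2$. Computing $\nabla^{\perp}$ from (\ref{norLevi2}) and imposing Codazzi yields $(\mu_2)_v=0$ (so $\mu_2=\mu_2(u)$, the geodesic-curvature datum of the second-factor curve) together with a Riccati equation $(\mu_1)_u=-\cot\theta\,(\mu_1^2+c_1\sin^2\theta)$; integrating it, case by case on the sign of $c_1$, produces $\mu_1$, and then $\beta$, explicitly in trigonometric, linear or hyperbolic form with an arbitrary integration function $C(v)$ --- the frequency $\sqrt{c_1}\cos\theta$ entering exactly here.

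Next, realise $M^2(c_1)\times M^2(c_2)$ isometrically inside a flat space of the appropriate dimension and signature ($\mathbb{E}^6$, $\mathbb{E}^6_1$ or $\mathbb{E}^5$ according to the signs of $c_1,c_2$; we may treat $c_1>0$ in detail, the cases $c_1\le 0$ being analogous), write $\psi=(\psi_1,\dots,\psi_6)$ with the first three coordinates carrying the $M^2(c_1)$-factor, and introduce the radial fields $\widetilde\xi=(\psi_1,\psi_2,\psi_3,0,0,0)$, $\bar\xi=(0,0,0,\psi_4,\psi_5,\psi_6)$; together with $\xi_1,\xi_2$ these span the normal space of $M^2$ in the flat ambient space, and the Gauss and Weingarten identities used in the proof of Theorem \ref{Th:const1}, i.e. the analogues of (\ref{Euc1})--(\ref{WeinD2}), hold here with the two factors exchanged. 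Since $h\partial_v=0$ and $\tfrac{I-F}{2}\partial_v=0$, one gets $(\psi_4,\psi_5,\psi_6)_v=0$, so the second-factor coordinates depend only on $u$: this is the curve $\bar f(u)$, and $\|\bar f'\|=\sin\theta$ follows from $\|he_1\|^2=\sin^2(2\theta)$. The remaining second order equations for $(\psi_1,\psi_2,\psi_3)$, of the shape $(\psi_j)_{uu}=-c_1\cos^2\theta\,\psi_j+(\text{connection terms})$ and $(\psi_j)_{uv}=\cot\theta\,\mu_1(\psi_j)_u$, integrate exactly as before to a trigonometric (resp. hyperbolic) extrusion $\cos(\sqrt{c_1}\cos\theta\,u)\widetilde f(v)+\sin(\sqrt{c_1}\cos\theta\,u)\widetilde g(v)$; imposing the metric and orthonormality constraints forces $\widetilde f,\widetilde g$ to be curves in $M^2(c_1)$ with $\widetilde g\perp\widetilde f,\widetilde f'$, so after reparametrising $v$ so that $\widetilde f$ has unit speed one gets $\widetilde g=\widetilde f\times\widetilde f'$ (resp. $\widetilde f\boxtimes\widetilde f'$), while $c_1=0$ leaves extra freedom and produces the two alternative forms in (\ref{geod4}), with $C$ the integration function above. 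Uniqueness up to isometries of $M^2(c_1)\times M^2(c_2)$ is again Theorem $2$.

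I expect the only real difficulty to be organisational rather than conceptual: keeping the substitutions $c_1\leftrightarrow c_2$, $u\leftrightarrow v$, $\tan\leftrightarrow\cot$ consistent throughout, and --- this is the source of the two distinct formulas in (\ref{geod4}) --- correctly enumerating, when $c_1=0$, the degenerate sub-cases coming from the possible signs/branches of the solution $\mu_1$ of its Riccati equation. No new geometric idea beyond the proof of Theorem \ref{Th:const1} is required.
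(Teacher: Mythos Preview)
Your proposal is correct and matches the paper's approach exactly: the paper itself gives no separate proof of Theorem~\ref{Th:constant2}, stating only that ``the case for which $\lambda_1=\cos(2\theta)$ and $\lambda_2=1$ can be treated analogously as the previous case'', and your outline is precisely that analogous argument carried out via Proposition~\ref{cond2} in place of Proposition~\ref{cond1}. The only caution is bookkeeping: be careful with the sign in the Riccati equation for $\mu_1$ (a direct Codazzi computation with $he_1=\sin(2\theta)\xi_1$ gives $(\mu_1)_u=\cot\theta\,(\mu_1^2+c_1\sin^2\theta)$ rather than the minus sign you wrote), though this only amounts to a reflection $u\mapsto -u$ and does not affect the classification.
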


We consider now the case for which $\lambda_1,\lambda_2 \in (-1,1)$ and $\lambda_2 - \lambda_1 \geq 0$ and show the following theorem.
\begin{Th}
Let $M^2$ be a constant angle surface with $\theta_1,\theta_2 \in (0,\frac{\pi}{2})$. Then there are $2$ possibilities:
\begin{enumerate}
  \item $M^2$ is an open part of the surfaces parameterized by
  \begin{gather*}
  \begin{split}(\cos(\sqrt{\frac{c_1c_2}{c_1 + c_2}}u)\widetilde{f}(v) + \sin(\sqrt{\frac{c_1c_2}{c_1 + c_2}}u)\frac{1}{\cos(\theta)}\widetilde{f}(v)\times \widetilde{f}'(v);\\
  \cos(\sqrt{\frac{c_1c_2}{c_1 + c_2}}u)\bar{f}(v) + \sin(\sqrt{\frac{c_1c_2}{c_1 + c_2}}u)\frac{1}{\sin(\theta)}\bar{f}(v)\times \bar{f}'(v))
  \end{split} \text{ if $c_1,c_2 > 0$},\\
  \begin{split}
  (\cosh(\sqrt{-\frac{c_1c_2}{c_1 + c_2}}u)\widetilde{f}(v) + \sinh(\sqrt{-\frac{c_1c_2}{c_1 + c_2}}u)\frac{1}{\cos(\theta)}\widetilde{f}(v)\boxtimes \widetilde{f}'(v);\\
  \cosh(\sqrt{-\frac{c_1c_2}{c_1 + c_2}}u)\bar{f}(v) + \sinh(\sqrt{-\frac{c_1c_2}{c_1 + c_2}}u)\frac{1}{\sin(\theta)}\bar{f}(v)\boxtimes \bar{f}'(v))\end{split} \text{ if $c_1,c_2 < 0$},
  \end{gather*}
where $\theta$ is equal to $\theta_1$ or $\theta_2$ and $\theta_2$ or $\theta_1$ is a real number in $(0,\frac{\pi}{2})$ such that $\cos^2(\theta_2)$ or $\cos^2(\theta_1)$ is equal to $\frac{c_2}{c_1 + c_2}$ and $\widetilde{f}$ is a curve in $M^2(c_1)$ of constant speed $\cos(\theta)$ and geodesic curvature $\widetilde{\kappa}$ and $\bar{f}$ is a curve in $M^2(c_2)$ of constant speed $\sin(\theta)$ and geodesic curvature $\overline{\kappa}$, such that $\frac{\widetilde{\kappa}}{\sqrt{|c_1|}} = \frac{\overline{\kappa}}{\sqrt{|c_2|}}$,
  \item a constant angle surface in $M^2(c_1)\times M^2(c_2)$ given by Proposition $\ref{Exis1},\ref{Exis2}$ or $\ref{Exis3}$.
\end{enumerate}
\end{Th}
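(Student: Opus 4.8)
The first (``if'') direction will be a direct verification: one checks by a straightforward computation that each parameterization listed in case (1) defines a surface whose tangent plane makes the prescribed constant angles $\theta_1$ and $\theta_2$ with $M^2(c_1)\times\{p_2\}$, while the surfaces in case (2) are constant angle surfaces by the very construction of Propositions \ref{Exis1}, \ref{Exis2} and \ref{Exis3}.

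For the converse I would first dispose of the subcase $\lambda_1=\lambda_2$. Then $f$ is proportional to the identity, so by the proposition on such surfaces in Section 3 the surface $M^2$ is totally geodesic with $\lambda_1=\lambda_2=\frac{c_2-c_1}{c_1+c_2}$ and $c_1c_2>0$, hence congruent to (\ref{geod1}) or (\ref{geod2}); since $\sqrt{\tfrac{c_1+c_2}{c_2}}=\tfrac{1}{\cos\theta}$ when $\cos^2\theta=\tfrac{c_2}{c_1+c_2}$, this is exactly the member of family (1) whose two profile curves are geodesics, i.e. $\widetilde{\kappa}=\overline{\kappa}=0$. So from now on assume $\lambda_1<\lambda_2$, equivalently $\theta_1>\theta_2$. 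By the last proposition of Section 3 there is an adapted orthonormal frame $\{e_1,e_2,\xi_1,\xi_2\}$ with $fe_i=\cos(2\theta_i)e_i$, $t\xi_i=-\cos(2\theta_i)\xi_i$, $he_i=\sin(2\theta_i)\xi_i$ (after fixing signs, using (\ref{symf}), (\ref{trans}) and (\ref{orth})), $S_{\xi_1}=\mathrm{diag}(0,\mu_1)$, $S_{\xi_2}=\mathrm{diag}(\mu_2,0)$, and explicit expressions for $\nabla$ and $\nabla^{\perp}$ in $\mu_1,\mu_2$. Substituting this frame into the Codazzi equation (\ref{Codazzi}), and using that by (\ref{Gauss}) the Gaussian curvature equals the constant $c_1\cos^2\theta_1\cos^2\theta_2+c_2\sin^2\theta_1\sin^2\theta_2$, I expect to obtain a first order system for $\mu_1$ and $\mu_2$.

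Next I would introduce coordinates. As in the proof of Theorem \ref{Th:const1}, the particular shape of the connection coefficients $g(\nabla_{e_1}e_1,e_2)$ and $g(\nabla_{e_2}e_2,e_1)$ permits a choice of local coordinates $(u,v)$ with $\partial_u$ a multiple of $e_1$ and $\partial_v$ a multiple of $e_2$; in these coordinates the metric takes one of the forms $E\,du^2+G\,dv^2$ of Propositions \ref{Exis1}--\ref{Exis3} and the system for $\mu_1,\mu_2$ becomes exactly the system (\ref{exiscon1}), with constants $a_1,a_2,A_1,A_2$ as defined there (these, as noted in the Sine-Gordon/Sinh-Gordon subsection, are B\"acklund transformations of a Sine- or Sinh-Gordon equation).

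I then split according to whether the $A_i$ vanish. If $A_1\neq0$ and $A_2\neq0$, the reconstructed data $(g,\sigma,\nabla^{\perp},f,h,t)$ coincides with the data built in Proposition \ref{Exis1}, \ref{Exis2} or \ref{Exis3} according to the signs of $A_1,A_2$, so Theorem 2 gives congruence with the corresponding surface of case (2). If one of them, say $A_1$, vanishes, then $\tan^2\theta_2=\tfrac{c_1}{c_2}$, which forces $c_1c_2>0$ and $K=\tfrac{c_1c_2}{c_1+c_2}$; the equation for the surviving $\mu$ integrates explicitly, and realizing $M^2(c_1)\times M^2(c_2)$ as a codimension $2$ submanifold of $\mathbb{E}^6$ (or its pseudo-Euclidean analogue) and solving the Gauss--Weingarten ordinary differential equations as in Theorem \ref{Th:const1} and Proposition \ref{pro:geod} produces the parameterization of family (1), the coupling $\tfrac{\widetilde{\kappa}}{\sqrt{|c_1|}}=\tfrac{\overline{\kappa}}{\sqrt{|c_2|}}$ of the geodesic curvatures coming out of the length and orthogonality conditions on the two profile curves. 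I expect the main obstacle to be this reduction of the Codazzi equations to the normalized system (\ref{exiscon1}) together with the bookkeeping of the sign cases for $(A_1,A_2)$; the $A_i=0$ branch repeats, essentially verbatim, the $\mathbb{E}^6$-integration already performed in the proofs of Theorem \ref{Th:const1} and Proposition \ref{pro:geod}, and once the abstract data has been matched to the existence propositions the uniqueness theorem closes the argument.
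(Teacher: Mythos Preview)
Your overall architecture is right, but the case split you propose is not the one that actually works, and as stated it would misclassify surfaces. You split on whether the constants $A_i$ vanish and on their signs. The paper splits on whether the functions $\mu_i$ vanish. These are not interchangeable: the Codazzi equation $e_1[\mu_1]-a_1\mu_1^2=A_1/\sin(2\theta_1)$ shows that $\mu_1\equiv 0$ \emph{forces} $A_1=0$, but the converse is false. When $A_1=0$ and $\mu_1\not\equiv 0$ the surface does \emph{not} fall into family (1); one still has $\mu_1^2+A_1=\mu_1^2>0$, the normalization $\beta^2=1/(\mu_1^2+A_1)$ is available, and the surface belongs to Proposition \ref{Exis1}, i.e.\ to case (2). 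So your sentence ``if $A_1$ vanishes \dots\ the equation for the surviving $\mu$ integrates explicitly'' tacitly assumes $\mu_1=0$, which you have not deduced.

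Likewise, the distribution among Propositions \ref{Exis1}, \ref{Exis2}, \ref{Exis3} is governed not by the signs of $A_1,A_2$ but by whether the first integrals $\alpha^2(\mu_2^2+A_2)$ and $\beta^2(\mu_1^2+A_1)$ (call them $C_2(u)$ and $C_1(v)$) vanish: $C_1,C_2\neq 0$ gives Proposition \ref{Exis1}; $C_1=0$ (hence $\mu_1^2=-A_1$, which \emph{then} forces $A_1<0$) with $C_2\neq 0$ gives Proposition \ref{Exis2}; $C_1=C_2=0$ gives Proposition \ref{Exis3}. So the correct organization is: first branch on $\mu_1=0$ (or $\mu_2=0$), which yields the explicit $\mathbb{E}^6$-integration of family (1) and the curvature coupling; otherwise $\mu_1,\mu_2\neq 0$, integrate the connection equations to obtain the first integrals $C_i$, and branch on their vanishing. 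Once you fix this, the rest of your outline (the frame, the coordinates adapted to $e_1,e_2$, the integration in $\mathbb{E}^6$ via Gauss--Weingarten, and the appeal to Theorem 2) matches the paper's proof.
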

\begin{proof}
After a straight-forward computation, one can deduce that the surfaces listed in the theorem are constant angle surfaces in $M^2(c_1)\times M^2(c_2)$.
Conversely, let us assume that $M^2$ is a constant angle surface in $M^2(c_1)\times M^2(c_2)$ with angles $\theta_1,\theta_2 \in (0,\frac{\pi}{2})$. Suppose first that $\theta_1 = \theta_2$. Then $M^2$ is a totally geodesic surface in $\Mcc$ and $\lambda_1 = \lambda_2 = \frac{c_2 - c_1}{c_1 + c_2}$. Hence $M^2$ is locally congruent to (\ref{geod1}) and (\ref{geod2}). So we are in the special case of case $1$ of the theorem. Let us now suppose that $\theta_1 \neq \theta_2$.  Then there is an adapted orthonormal frame $\{e_1,e_2,\xi_1,\xi_2\}$ such that $fe_i = \cos(2\theta_i)e_i$ and $t\xi_i = -\cos(2\theta_i)\xi_i$, where $\cos(2\theta_i) = \lambda_i$, for $i=1,2$. Moreover we have that the shape operators $S_{\xi_1}$ and $S_{\xi_2}$ have the same form as (\ref{shape}) with respect to $\{e_1,e_2\}$. Moreover the Levi-Civita connection is given by
\begin{gather}\label{Levi5}
\nabla_{e_1}e_1 = \frac{\sin(\theta_2)\cos(\theta_2)\mu_2}{\cos^2(\theta_1) - \cos^2(\theta_2)}e_2,\\
\nabla_{e_1}e_2 = \frac{\sin(\theta_2)\cos(\theta_2)\mu_2}{\cos^2(\theta_2) - \cos^2(\theta_1)}e_1,\\
\nabla_{e_2}e_1 = \frac{\sin(\theta_1)\cos(\theta_1)\mu_1}{\cos^2(\theta_1) - \cos^2(\theta_2)}e_2,\\
\nabla_{e_2}e_2 = \frac{\sin(\theta_1)\cos(\theta_1)\mu_1}{\cos^2(\theta_2) - \cos^2(\theta_1)}e_1.
\end{gather}
We also know the normal connection $\nabla^{\perp}$ of $M^2$ in $M^2(c_1)\times M^2(c_2)$:
\begin{gather*}
\nabla^{\perp}_{e_1}\xi_1 = \frac{\sin(\theta_1)\cos(\theta_1)\mu_2}{\cos^2(\theta_1) - \cos^2(\theta_2)}\xi_2 ,\\
\nabla^{\perp}_{e_1}\xi_2 = \frac{\sin(\theta_1)\cos(\theta_1)\mu_2}{\cos^2(\theta_2) - \cos^2(\theta_1)}\xi_1,\\
\nabla^{\perp}_{e_2}\xi_1 = \frac{\sin(\theta_2)\cos(\theta_2)\mu_1}{\cos^2(\theta_1) - \cos^2(\theta_2)}\xi_2,\\
\nabla^{\perp}_{e_2}\xi_2 = \frac{\sin(\theta_2)\cos(\theta_2)\mu_1}{\cos^2(\theta_2) - \cos^2(\theta_1)}\xi_1.
\end{gather*}
Using the expressions for the Levi-Civita connection and the normal connection, we find that the Codazzi equations are given by
\begin{gather}
e_1[\mu_1] + \frac{\sin(\theta_1)\cos(\theta_1)}{\cos^2(\theta_1) - \cos^2(\theta_2)}\mu_1^2 = (c_1\cos^2(\theta_2) - c_2\sin^2(\theta_2))\sin(\theta_1)\cos(\theta_1),\label{Codazzi1}\\
e_2[\mu_2] + \frac{\sin(\theta_2)\cos(\theta_2)}{\cos^2(\theta_2) - \cos^2(\theta_1)}\mu_2^2 = (c_1\cos^2(\theta_1) - c_2\sin^2(\theta_1))\sin(\theta_2)\cos(\theta_2)\label{Codazzi2}.
\end{gather}

\underline{Case 1: $\mu_1 = \mu_2 = 0$}. From the equations of Codazzi (\ref{Codazzi1}) and (\ref{Codazzi2}) we obtain that $c_1\cos^2(\theta_1)$\newline $ - c_2\sin^2(\theta_1)$
$ = c_1\cos^2(\theta_2) - c_2\sin^2(\theta_2) = 0$, because $\theta_1,\theta_2 \in (0,\frac{\pi}{2})$ and hence we obtain that $\cos(2\theta_1) = \cos(2\theta_2) = \frac{c_2 - c_1}{c_1 + c_2}$ with $c_1c_2 >0$. Since $\cos(2\theta_2) > \cos(2\theta_1)$ by assumption, we have a contradiction.

\underline{Case 2: $\mu_1 = 0,\mu_2 \neq 0$}. As before, using the equation of Codazzi, we obtain that $c_1\cos^2(\theta_2) - c_2\sin^2(\theta_2) = 0$ and hence we obtain that $\cos(2\theta_2) = \frac{c_2 - c_1}{c_1 + c_2}$ with $c_1c_2 > 0$. Denote in the following $\theta_1$ by $\theta$.
We will work out only the case for which $c_1 > 0$ and $c_2 > 0$. The other case can be treated analogously. From the expressions for the Levi-Civita connection, we find that $\nabla_{e_2}e_1 = \nabla_{e_2}e_2 = 0$. Let us take now coordinates on $M^2$ with $\partial_u  =\alpha e_1$ and $\partial_v = \beta e_2$. Using the condition $[\partial_u,\partial_v] = 0$ and the expressions for the Levi-Civita connection we find that
\begin{gather}
\alpha_v = \frac{\sqrt{c_1c_2}}{c_2\sin^2(\theta) - c_1\cos^2(\theta)}\alpha\beta\mu_2,\label{alpha1}\\
\beta_u = 0\label{beta1}.
\end{gather}
Equation (\ref{beta1}) implies that, after a change of the $u$-coordinate, we can assume that $\beta = 1$ and hence the metric takes the form
\begin{equation*}
g = \alpha^2du^2 + dv^2,
\end{equation*}
and so the Levi-Civita connection becomes:
\begin{gather*}
\nabla_{\partial_u}\partial_u = \frac{\alpha_u}{\alpha}\partial_u - \alpha\alpha_v\partial_v,\\
\nabla_{\partial_u}\partial_v = \nabla_{\partial_v}\partial_u = \frac{\sqrt{c_1c_2}}{c_2\sin^2(\theta) - c_1\cos^2(\theta)}\mu_2\partial_u,\\
\nabla_{\partial_v}\partial_v = 0.
\end{gather*}
The equation of Codazzi (\ref{Codazzi2}) can now be rewritten as
\begin{equation}\label{CodazziCase2}
(\mu_2)_v = \frac{c_1c_2}{c_1\cos^2(\theta) - c_2\sin^2(\theta)}\left(\frac{(c_1\cos^2(\theta) - c_2\sin^2(\theta))^2}{c_1 + c_2} + \mu_2^2\right).
\end{equation}
Integrating equations (\ref{alpha1}) and (\ref{CodazziCase2}) we find
\begin{gather*}
\mu_2 = \frac{c_1\cos^2(\theta) - c_2\sin^2(\theta)}{\sqrt{c_1 + c_2}}\tan(\sqrt{\frac{c_1c_2}{c_1 + c_2}}v + C(u)),\\
\alpha =D(u)\cos(\sqrt{\frac{c_1c_2}{c_1 + c_2}}v + C(u)).
\end{gather*}

We consider now the surface $M^2$ as a codimension 4 immersed surface in the Euclidean space $\mathbb{E}^6$. By $D$ we will denote the Euclidean connection. We remark that $\xi_1,\xi_2,\widetilde{\xi} = (\psi_1,\psi_2,\psi_3,0,0,0)$ and $\overline{\xi} = (0,0,0,\psi_4,\psi_5,\psi_6)$ are normals of $M^2$ in $\mathbb{E}^6$. We still have that the equations (\ref{Euc1}) and (\ref{Euc2}) hold. Moreover the equations of Gauss and Weingarten are given by
\begin{gather*}
D_X Y = \nabla_X Y + \sigma(X,Y) - \frac{c_1}{2}g(\left(\frac{I + f}{2}\right)X,Y)\widetilde{\xi} - \frac{c_2}{2}g(\left(\frac{I - f}{2}\right)X,Y)\bar{\xi},
,\\
D_X\xi_1 = -S_{\xi_1}X + \nabla^{\perp}_X\xi_1 -\frac{c_1}{2}\widetilde{g}(hX,\xi_1) + \frac{c_2}{2}\widetilde{g}(hX,\xi_1)
,\\
D_X\xi_2 = -S_{\xi_2}X + \nabla^{\perp}_X\xi_2 -\frac{c_1}{2}\widetilde{g}(hX,\xi_2) + \frac{c_2}{2}\widetilde{g}(hX,\xi_2)
.
\end{gather*}
Now applying the formula Gauss and the previous equations we find
\begin{gather}
D_{\partial_u}\partial_u  =  \frac{\alpha_u}{\alpha}\partial_u - \alpha\alpha_v\partial_v + \mu_2\alpha^2\xi_2 - c_1\cos^2(\theta_1)\alpha^2\widetilde{\xi} - c_2\sin^2(\theta_1)\alpha^2\bar{\xi},\\
D_{\partial_u}\partial_v = D_{\partial_v}\partial_u = -\sqrt{\frac{c_1c_2}{c_1 + c_2}}\tan(\sqrt{\frac{c_1c_2}{c_1 + c_2}}v + C(u))\partial_u,\label{GD1Case2}\\
D_{\partial_v}\partial_v = -\frac{c_1c_2}{c_1 + c_2}(\widetilde{\xi} + \bar{\xi})\label{GD2Case2},
\end{gather}
where
\begin{gather*}
(\xi_2)_i = \sqrt{\frac{c_1}{c_2}}(\psi_i)_{v}\: \textrm{ for }\: i= 1,2,3,\\
(\xi_2)_j = -\sqrt{\frac{c_1}{c_2}}(\psi_j)_{v}\: \textrm{ for }\: j= 4,5,6.\\
\end{gather*}

Integrating the last two formulas of Gauss, i.e. (\ref{GD1Case2}) and (\ref{GD2Case2}), we obtain analogously as before that
\begin{equation*}
\psi(u,v) = (\cos(\sqrt{\frac{c_1c_2}{c_1 + c_2}}u)\widetilde{f}(v) + \sin(\sqrt{\frac{c_1c_2}{c_1 + c_2}}u)\widetilde{g}(v),\cos(\sqrt{\frac{c_1c_2}{c_1 + c_2}}u)\bar{f}(v) + \sin(\sqrt{\frac{c_1c_2}{c_1 + c_2}}u)\bar{g}(v)),
\end{equation*}
where $\widetilde{f}(v) = (\widetilde{f}_1(v),\widetilde{f}_2(v),\widetilde{f}_3(v)), \widetilde{g}(v) = (\widetilde{g}_1(v),\widetilde{g}_2(v),\widetilde{g}_3(v)), \bar{f}(v) = (\bar{f}_1(v),\bar{f}_2(v),\bar{f}_3(v))$ and $\bar{g}(v) = (\bar{g}_1(v),\bar{g}_2(v),\bar{g}_3(v))$ and
\begin{gather*}
\widetilde{f}_i(v) = \widetilde{K}_i +  \int_{v_0}^{v}\widetilde{H}_i(\tau)\cos(C(\tau))d\tau,\\
\widetilde{g}_i(v) = \widetilde{L}_i -  \int_{v_0}^{v}\widetilde{H}_i(\tau)\sin(C(\tau))d\tau,\\
\overline{f}_j(v) =\overline{K}_j +  \int_{v_0}^{v}\overline{H}_j(\tau)\cos(C(\tau))d\tau,\\
\overline{g}_j(v) = \overline{L}_j -  \int_{v_0}^{v}\overline{H}_j(\tau)\sin(C(\tau))d\tau,
\end{gather*}
for $i,j =1,\dots,3$. Moreover we have the following equations
\begin{gather*}
g(\psi_u,\psi_u) = \alpha^2,\:  g(\psi_v,\psi_v) = 1,\: g(\psi_u,\psi_v) = 0,\\
g(\xi_1,\psi_u) = 0,\:  g(\xi_1,\psi_v) = 0,\:  g(\xi_1,\xi_1) = 1,\\
g(\xi_2,\psi_u) = 0,\: g(\xi_2,\psi_v) = 0,\: g(\xi_2,\xi_2) = 1,\\
g(\widetilde{\xi},\psi_u) = 0,\: g(\widetilde{\xi},\psi_v) = 0,\: g(\widetilde{\xi},\widetilde{\xi}) = \frac{1}{c_1},\\
g(\bar{\xi},\psi_u) = 0,\: g(\bar{\xi},\psi_v) = 0,\: g(\bar{\xi},\bar{\xi}) = \frac{1}{c_2},\\
g(\xi_1,\xi_2) = 0,\:g(\xi_1,\widetilde{\xi}) = 0,\: g(\xi_2,\widetilde{\xi}) = 0,\: g(\xi_1,\bar{\xi}) = 0,\: g(\xi_2,\bar{\xi}) = 0,
\end{gather*}
which are equivalent to
\begin{gather*}
\sum_{i = 1}^3\widetilde{f}_i^2 = \frac{1}{c_1}  = \sum_{i = 1}^3\widetilde{g}_i^2,\\
\sum_{j = 1}^3\widetilde{f}_j^2 = \frac{1}{c_2} =\sum_{j = 1}^3\bar{g}_j^2,\\
\sum_{i = 1}^3\widetilde{f}_i\widetilde{g}_i = 0 = \sum_{i = 1}^3\widetilde{f}'_i\widetilde{g}_i,\\
\sum_{j = 1}^3\bar{f}_j\bar{g}_j = 0 = \sum_{j = 1}^3\bar{f}'_j\bar{g}_j ,
\end{gather*}
\begin{multline*}
\sum_{i = 1}^3\left((\widetilde{f}_i')^2\cos^2(\sqrt{\frac{c_1c_2}{c_1 + c_2}}v) + (\widetilde{g}_i')^2\sin^2(\sqrt{\frac{c_1c_2}{c_1 + c_2}}v)  + 2\widetilde{f}_i'\widetilde{g}_i'\cos(\sqrt{\frac{c_1c_2}{c_1 + c_2}}v)\sin(\sqrt{\frac{c_1c_2}{c_1 + c_2}}v)\right)\\
 =  \cos^2(\theta)D^2(u)\cos^2(\sqrt{\frac{c_1c_2}{c_1 + c_2}}v + C(u)),
\end{multline*}
\begin{multline*}
\sum_{j = 1}^3\left((\bar{f}_j')^2\cos^2(\sqrt{\frac{c_1c_2}{c_1 + c_2}}v) + (\bar{g}_j')^2\sin^2(\sqrt{\frac{c_1c_2}{c_1 + c_2}}v)  + 2\bar{f}_j'\bar{g}_j'\cos(\sqrt{\frac{c_1c_2}{c_1 + c_2}}v)\sin(\sqrt{\frac{c_1c_2}{c_1 + c_2}}v)\right)\\
 =  \sin^2(\theta)D^2(u)\cos^2(\sqrt{c_2}\sin(\theta)v + C(u)).
\end{multline*}
We obtain from the above equations that $\widetilde{f}$ and $\widetilde{g}$ are curves in $M^2(c_1)$, that $\bar{f}$ and $\bar{g}$ are curves in $M^2(c_2)$. If we change the $v$-coordinate such that $\widetilde{f}$ and $\bar{f}$ have constant speed $\cos(\theta)$ and $\sin(\theta)$, which corresponds to the fact that $D^2(v) = \sec^2(C(v))$, we see then from the previous equations that $\widetilde{g} = \pm\frac{1}{\cos(\theta)}\widetilde{f} \times \widetilde{f}'$ and $\bar{g} = \pm\frac{1}{\sin(\theta)}\bar{f} \times \bar{f}'$ and we can choose $\widetilde{g} = \frac{1}{\cos(\theta)}\widetilde{f} \times \widetilde{f}'$ and $\bar{g} = \frac{1}{\sin(\theta)}\bar{f} \times \bar{f}'$. From the last two equations we also deduce that $\frac{\widetilde{f}'\cdot\widetilde{g}'}{\cos^2(\theta)} = \frac{\bar{f}'\cdot\bar{g}'}{\sin^2(\theta)}$ and $\frac{\widetilde{g}'\cdot \widetilde{g}'}{\cos^2(\theta)} = \frac{\bar{g}'\cdot\bar{g}'}{\sin^2(\theta)}$. This is equivalent to $\frac{\widetilde{\kappa}}{\sqrt{c_1}} = \frac{\overline{\kappa}}{\sqrt{c_2}}$, where $\widetilde{\kappa}$ and $\overline{\kappa}$ are the geodesic curvatures of respectively $\widetilde{f}$ and $\overline{f}$. So we obtain the first case of the theorem.

\underline{Case 3: $\mu_1 \neq 0,\mu_2 \neq 0$}. Let $(u,v)$ be coordinates on $M^2$ such that $\partial_u = \alpha e_1 $ and $\partial_v  = \beta e_2$. From the expression of the Levi-Civita connection, i.e. equation (\ref{Levi5}) and the condition $[\partial_u,\partial_v] = 0$, we obtain
\begin{gather}
a_2\mu_2 = \frac{\alpha_v}{\alpha\beta},\label{connection1}\\
a_1\mu_1 = \frac{\beta_u}{\alpha\beta}\label{connection2},
\end{gather}
where $a_1$ and $a_2$ are constants as in Proposition $\ref{Exis1}$. Using the previous equations, we can rewrite the equations of Codazzi (\ref{Codazzi1}) and (\ref{Codazzi2}) as follows
\begin{gather*}
(\alpha^2(\mu_2^2 + A_2))_v = 0,\\
(\beta^2(\mu_1^2 + A_1))_u = 0,
\end{gather*}
where $A_1$ and $A_2$ are constants as in Proposition $\ref{Exis1}$ and hence we obtain that $\alpha^2(\mu_2^2 + A_2) = C_2(u)$ and $\beta^2(\mu_1^2 + A_1) = C_1(v)$. We have to consider now several subcases.

\underline{Case 3.a.: $C_1\neq 0$, $C_2\neq 0$}. After a transformation of the $u$-coordinate and the $v$-coordinate we can suppose that $\alpha^2 = \frac{1}{\mu_2^2 + A_2}$ and $\beta^2 = \frac{1}{\mu_1^2 + A_1}$. Substituting this in equations (\ref{connection1}) and (\ref{connection2}) we obtain equations (\ref{exiscon1}). We can conclude that the isometric immersion $\psi$ is locally congruent  to the surface of Proposition $\ref{Exis1}$.

\underline{Case 3.b.: $C_1 = 0$, $C_2 \neq 0$}.  Since $C_1 = 0$, we have that $\mu_1^2 + A_1 = 0$. So we have that $A_1 < 0$, because $\mu_1 \neq 0$. We conclude that $\mu_1 = \pm\sqrt{-A_1}$ and without loss of generalization we can suppose that $\mu_1 = \sqrt{-A_1}$ . After a transformation of the $u$-coordinate, we can suppose that $\alpha^2 = \frac{1}{\mu_2^2 + A_2}$. Substituting the last two equations into equations (\ref{connection1}) and (\ref{connection2}) we obtain equations (\ref{exiscon2}). We can conclude that the isometric immersion $\psi$ is locally congruent  to the surface of Proposition $\ref{Exis2}$.

\underline{Case3.b.: $C_1 = C_2 = 0$}. Analogously as before we conclude that $\psi$ is locally congruent to the surface of Proposition $\ref{Exis3}$.
\end{proof}

We end this paper with a classification of the totally geodesic surfaces of $M^2(c_1)\times M^2(c_2)$. We have seen that a totally geodesic surface of $M^2(c_1)\times M^2(c_2)$ is a constant angle surfaces of $M^2(c_1)\times M^2(c_2)$. We will use the classification of the constant angle surfaces to give the classification of the totally geodesic surfaces of $M^2(c_1)\times M^2(c_2)$.

\begin{Th}\label{theoremgeod}
Let $M^2$ be a totally geodesic surface of $M^2(c_1)\times M^2(c_2)$. Then there are four possibilities
\begin{enumerate}
\item $M^2$ is locally congruent to the immersion given by (\ref{geod1}) or by (\ref{geod2});
\item $M^2$ is a product of two geodesic curves;
\item $M^2$ is an open part of $M^2(c_1)\times \{p_2\}$ or $\{p_1\}\times M^2(c_2)$;
\item $M^2$ is locally congruent to the first immersion of (\ref{geod3}), in which the curve $\widetilde{f}$ is a geodesic curve of $M^2(c_1)$ if $c_2 = 0$, or to the first immersion of (\ref{geod4}), in which the curve $\overline{f}$ is a  geodesic curve of $M^2(c_2)$ if $c_1 = 0$.
\end{enumerate}
\end{Th}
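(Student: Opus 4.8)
The plan is to bootstrap off Proposition \ref{geodconstant}. A totally geodesic surface is in particular a constant angle surface, and at each point the eigenvalue pair $(\lambda_1,\lambda_2)$ of $f$ takes one of the four forms listed there, so by continuity one of them holds on all of $M^2$. It therefore suffices to run through those four possibilities and, in each, feed the data into the classification of constant angle surfaces already obtained, then impose the extra condition that the second fundamental form vanish — equivalently, in the adapted frames of Propositions \ref{cond1} and \ref{cond2}, that the shape-operator functions $\mu_1,\mu_2$ vanish identically — to cut the family down to its totally geodesic members.

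First I would dispose of the cases in which $f$ is pointwise proportional to the identity. The analysis of the subsection treating $f=\lambda I$ already gives $\lambda\in\{1,-1,\frac{c_2-c_1}{c_1+c_2}\}$: the first two values yield that $M^2$ is an open part of $M^2(c_1)\times\{p_2\}$ or of $\{p_1\}\times M^2(c_2)$, which is possibility (3), and the third yields, via Proposition \ref{pro:geod}, the immersion (\ref{geod1}) or (\ref{geod2}), which is possibility (1). This also settles condition (1) of Proposition \ref{geodconstant} and the sub-case $\theta_1=\theta_2$. Condition (2) then only adds the case $\{\lambda_1,\lambda_2\}=\{1,-1\}$ with $f$ not proportional to $I$, and there, as recorded at the start of the subsection ``$f$ is not proportional to the identity'', $M^2$ is a Riemannian product of a curve in $M^2(c_1)$ and a curve in $M^2(c_2)$; being totally geodesic, both curves are geodesics, which is possibility (2). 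Condition (4) is handled the same way whenever its second eigenvalue equals $\pm 1$: eigenvalues $\{1,1\}$ or $\{-1,-1\}$ return us to possibility (3) and eigenvalues $\{1,-1\}$ to possibility (2).

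What remains are the mixed cases, one eigenvalue $\pm1$ and the other in $(-1,1)$. If the $\pm1$ eigenvalue is $-1$ (angles $\theta$ and $\frac{\pi}{2}$), Proposition \ref{cond1} supplies the adapted frame and shape operators; imposing $\mu_1=\mu_2=0$ and reading off the Codazzi equation (\ref{Co2}) forces $c_2=0$, after which Theorem \ref{Th:const1} identifies $\psi$ with the first immersion of (\ref{geod3}), and the vanishing of $\mu_1$ forces $\widetilde{f}$ to be a geodesic of $M^2(c_1)$ — possibility (4). The case of angles $0$ and $\theta$ is symmetric: Proposition \ref{cond2}, the analogue of (\ref{Co2}), $c_1=0$, Theorem \ref{Th:constant2}, the first immersion of (\ref{geod4}) with $\overline{f}$ a geodesic of $M^2(c_2)$. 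The same mechanism empties condition (3) of Proposition \ref{geodconstant}: there $c_1c_2>0$, yet $\mu_1=\mu_2=0$ forces $c_2=0$ by (\ref{Co2}), a contradiction. Finally, two distinct eigenvalues both lying in $(-1,1)$ cannot occur for a totally geodesic surface at all — this is already excluded by the Codazzi computation inside Proposition \ref{geodconstant} (equivalently, by ``Case 1: $\mu_1=\mu_2=0$'' in the proof of the classification of constant angle surfaces with both angles in $(0,\frac{\pi}{2})$, where $\cos(2\theta_1)=\cos(2\theta_2)$ is forced). Assembling the surviving outcomes produces exactly the four possibilities in the statement.

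I expect the main difficulty to be bookkeeping rather than analysis: keeping the case division exhaustive and non-overlapping and matching each branch to the correct earlier result. The one genuinely substantive point is to check, in the mixed-angle cases, that total geodesy not only collapses $c_1$ or $c_2$ to zero but, through $\mu_1=0$ (resp. $\mu_2=0$), pins down precisely the ``geodesic curve'' condition recorded in possibility (4), while ruling out the second immersions appearing in (\ref{geod3}) and (\ref{geod4}).
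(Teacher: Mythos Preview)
Your proposal is correct and follows essentially the same route as the paper: invoke Proposition~\ref{geodconstant} to reduce to the four eigenvalue configurations, then feed each into the relevant constant-angle classification (Proposition~\ref{pro:geod}, Theorems~\ref{Th:const1} and~\ref{Th:constant2}) with $\mu_1=\mu_2=0$. The one noticeable difference is how you dispose of condition~(3) (one eigenvalue $\pm1$, the other $-\tfrac{b}{a}$ with $c_1c_2>0$): you read off $c_2=0$ (resp.\ $c_1=0$) directly from the Codazzi relation~(\ref{Co2}) (resp.\ its analogue), whereas the paper argues via curvature, combining $K=\tfrac{c_1c_2}{c_1+c_2}$ from Propositions~\ref{cond1}/\ref{cond2} with the flatness forced by~(\ref{Levi1})/(\ref{Levi2}) when $\mu_1=\mu_2=0$; both arguments are short and yield the same contradiction.
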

\begin{proof}
Proposition $\ref{geodconstant}$ tells us that a totally geodesic surface of $M^2(c_1)\times M^2(c_2)$ is a constant angle surface. Moreover in the proof of Proposition $\ref{geodconstant}$, we have seen that there are four possible situations. In the first case we have seen that the angle functions $\lambda_1$ and $\lambda_2$ are equal and have value $-\frac{b}{a}$, in which $a = \frac{c_1 + c_2}{4}$ , $b = \frac{c_1 - c_2}{4}$ and $c_1c_2 > 0$. We have classified these totally geodesic surfaces in Proposition $\ref{pro:geod}$ and showed that they are locally congruent to (\ref{geod1}) if $c_1,c_2 >0$ or to (\ref{geod2}) if $c_1,c_2 < 0$. The second case says that the angle functions are equal to $\pm 1$. If the angle functions have opposite sign then one can easily show that the surface is a Riemannian product of curves of $M^2(c_1)$ and $M^2(c_2)$ and that this surface is totally geodesic if and only if both curves are geodesic curves of $M^2(c_1)$ and $M^2(c_2)$. If both angle functions have the same sign then one can easily deduce that the surface is an open part of $M^2(c_1)\times \{p_2\}$ if the angle functions are equal to $1$ or an open part of $\{p_1\}\times M^2(c_2)$ if the angle functions are equal to $-1$. The third case in the proof tells us that one of the angle functions is $\pm 1$ and the other angle function is equal to $-\frac{b}{a}$. We show that in this case there exist only totally geodesic surfaces in the case that $c_1 = 0$ or $c_2 =0$. Suppose therefore that $c_1 \neq 0$ and $c_2 \neq 0$. Remark that $-\frac{b}{a} \neq \pm1$, because $c_1 \neq 0$ and $c_2 \neq 0$. In Propositions $\ref{cond1}$ and $\ref{cond2}$ we have showed that the curvature of the surface is $c_2\sin^2(\theta)$ if one of the angle functions is $-1$ or $c_1\cos^2(\theta)$ if one of the angle functions is $1$. So if the other angle function is equal to $-\frac{b}{a}$, then we obtain that in both cases the curvature is equal to $\frac{c_1c_2}{c_1 + c_2}$. But  the surface is totally geodesic and hence we obtain form equations (\ref{Levi1}) and (\ref{Levi2}) that the surface is flat. Finally we obtain that $\frac{c_1c_2}{c_1 + c_2} = 0$ and hence $c_1 = 0$ or $c_2 = 0$. This is of course a contradiction, because we have assumed that $c_1 \neq 0$ and $c_2 \neq 0$. Hence we obtain that in the third case $c_1 = 0$ or $c_2 = 0$ and so the angle functions in this case are $\pm 1$. This brings us back to the second case and hence we are finished. In the fourth case we have that one of the angle functions is equal to $1$ and the other angle function is a constant in $[-1,1]$ if $c_1 = 0$ or that one of the angle functions is equal to $-1$ and the other angle functions is a constant in $[-1,1]$ if $c_2 = 0$. We can suppose that the other angle function is a constant in $(-1,1)$. We can easily deduce from the classification theorems $\ref{Th:const1}$ and $\ref{Th:constant2}$, that $M^2$ is indeed locally congruent to the first immersion of (\ref{geod3}), in which the curve $\widetilde{f}$ is a geodesic curve of $M^2(c_1)$ if $c_2 = 0$, or to the first immersion of (\ref{geod4}), in which the curve $\overline{f}$ is a totally geodesic curve of $M^2(c_2)$ if $c_1 = 0$.
\end{proof}

\begin{Rm}
The special case when $c_1=c_2=2$ of Theorem \ref{theoremgeod} was also considered in the paper \cite{CL}
where totally geodesic surfaces in $Q^n$ (in particular, in $Q^2=S^2(2)\times S^2(2)$) were classified. In particular, they proved that a totally geodesic surface of $Q^n$ is one of the following three kinds:
\begin{enumerate}
\item a totally geodesic totally real surface,
\item a totally geodesic complex surface,
\item a totally geodesic surface of curvature 1/5 in $Q^n$ which is neither totally real nor complex. This case occurs only when $n\geq 3$.
\end{enumerate}
Since the third case doesn't occur for $n=2$, this is consistent with our results. It is interesting to remark that the third case was missing in \cite{CN1} and \cite{CN2}. This was remarked by S. Klein in \cite{Kle}, who didn't notice that the missing case did occur in the earlier paper \cite{CL}. The authors would like to thank B.-Y. Chen for drawing our attention to \cite{CL}.
\end{Rm}

\end{document}